\newcommand{\R}{{\mathbb{R}}}
\newcommand{\Z}{{\mathbb{Z}}}
\newcommand{\de}{{\mathrm{d}}}
\DeclareMathOperator{\dv}{div}
\newcommand{\diam}{\mathrm{diam}}
\newcommand{\dist}{{\mathrm{dist}}}
\newcommand{\Id}{\mathrm{Id}}
\newcommand{\setchar}[1]{\mathbf{1}_{#1}}
\newcommand{\lebesgue}{\mathcal{L}}
\newcommand{\hd}{\mathcal{H}}
\newcommand{\hdone}{\mathcal{H}^1}
\newcommand{\rca}{\mathrm{rca}}
\newcommand{\fbm}{{\mathrm{fbm}}}
\newcommand{\pushforward}[2]{{{#1}_{\#}#2}}
\newcommand{\restr}{{\mbox{\LARGE$\llcorner$}}}
\newcommand{\spt}{{\mathrm{spt}}}
\newcommand{\weakto}{\rightharpoonup}
\newcommand{\weakstarto}{\stackrel{*}{\rightharpoonup}}
\newcommand{\Wd}[1]{\mathrm{W}_{#1}}
\newcommand{\Wdone}{\Wd{1}}
\newcommand{\AC}{\mathrm{AC}}
\newcommand{\Lip}{\mathrm{Lip}}
\newcommand{\cont}{{\mathcal{C}}}
\newcommand{\contbdd}{\mathcal{C}_{b}}
\newcommand{\smooth}{\mathcal{C}_{c}^{\infty}}
\newcommand{\flux}{{\mathcal{F}}}
\newcommand{\vel}{F}
\newcommand{\reSpace}{\Gamma}
\newcommand{\reMeasure}{P_{\reSpace}}
\newcommand{\TPM}{\mathrm{TPM}}
\newcommand{\brTptEn}{{\mathcal{M}}}
\newcommand{\urbPlEn}{{\mathcal{E}}}
\newcommand{\XiaEn}{M_{\mathrm{F}}}
\newcommand{\MMSEn}{M_{\mathrm{P}}}
\newcommand{\urbPlXia}{E_{\mathrm{F}}}
\newcommand{\urbPlMMS}{E_{\mathrm{P}}}
\newcommand{\urbPlTPM}{C_\Sigma}%{E_{\mathrm{TPM}}}
\newcommand{\repsilonachi}{r_{\varepsilon,a}^\chi}
\newcommand{\Bcal}{\mathcal{B}}
\newcommand{\Mcal}{\mathcal{M}}
\newcommand{\Ncal}{\mathcal{N}}
\newcommand{\keywords}[1]{\noindent\textbf{Keywords:}\enspace#1}
\newcommand{\subjclass}[1]{\bigskip\noindent\emph{2010 MSC:}\enspace#1}
\numberwithin{equation}{section}
\theoremstyle{plain}
\newtheorem{theorem}{Theorem}[subsection]
\newtheorem{lemma}[theorem]{Lemma}
\newtheorem{proposition}[theorem]{Proposition}
\newtheorem{problem}[theorem]{Problem}
\theoremstyle{definition}
\newtheorem{definition}[theorem]{Definition}
\theoremstyle{remark}
\newtheorem{remark}[theorem]{Remark}
\begin{document}

\title{Equivalent formulations for the branched transport and urban planning problems}
\author{Alessio Brancolini\footnote{Institute for Numerical and Applied Mathematics, University of M\"unster, Einsteinstra\ss{}e 62, D-48149 M\"unster, Germany}\ \footnote{Email address: \texttt{alessio.brancolini@uni-muenster.de}} \and Benedikt Wirth\footnotemark[1]\ \footnote{Email address: \texttt{benedikt.wirth@uni-muenster.de}}}
\date{1st June 2016}
\maketitle

\begin{abstract}
We consider two variational models for transport networks, an \emph{urban planning} and a \emph{branched transport} model,
in both of which there is a preference for networks that collect and transport lots of mass together rather than transporting all mass particles independently.
The strength of this preference determines the ramification patterns and the degree of complexity of optimal networks.
Traditionally, the models are formulated in very different ways,
via cost functionals of the network in case of urban planning or via cost functionals of irrigation patterns or of mass fluxes in case of branched transport.
We show here that actually both models can be described by all three types of formulations;
in particular, the urban planning can be cast into a Eulerian (\emph{flux-based}) or a Lagrangian (\emph{pattern-based}) framework.

\bigskip\keywords{optimal transport, optimal networks, branched transport, irrigation, urban planning, Wasserstein distance}

\subjclass{49Q20, 49Q10, 90B10}

\end{abstract}

% \newpage
% \tableofcontents

\section{Introduction}

The object of this paper are two transport problems, namely the \emph{branched transport problem} and the \emph{urban planning problem}.
In general terms, a transport problem asks how to move mass from a given initial spatial distribution to a specific desired final mass distribution at the lowest possible cost.
Different cost functionals now lead to quite different optimisation problems.

\paragraph{Monge's problem.} The prototype of all these problem is Monge's problem.
In a rather general setting, let $\mu_+, \mu_-$ be finite positive Borel measures on $\R^n$ with the same total mass. The transportation of $\mu_+$ onto $\mu_-$ is modelled by a measurable map $t : \R^n \to \R^n$ such that $\mu_-(B) = \mu_+(t^{-1}(B))$ for all Borel sets $B$. The cost to move an infinitesimal mass $\de\mu_+(x)$ in the point $x$ to the the point $t(x)$ is given by $d(x,t(x))\,\de\mu_+(x)$, and the total cost is then given by the formula
\begin{equation}\label{eq:monge}
 \int_{\R^n} d(x,t(x))\,\de\mu_+(x)\,.
\end{equation}
Usually, $d(x,y) = |x-y|^p$ for some $p\geq1$.

\paragraph{Branched transport problem.}
Monge's cost functional is linear in the transported mass $\de\mu_+(x)$ and thus does not penalise spread out particle movement.
Each particle is allowed to travel independently of the others.
This feature makes the Monge's problem unable to model systems which naturally show ramifications (e.\,g., root systems, leaves, the cardiovascular or bronchial system, etc.).
For this reason, the \emph{branched transport problem} has been introduced by Maddalena, Morel, and Solimini in \cite{Maddalena-Morel-Solimini-Irrigation-Patterns} and by Xia in \cite{Xia-Optimal-Paths}.
It involves a functional which forces the mass to be gathered as much as possible during the transportation. %, consistently with the boundary conditions.
This is achieved using a cost which is strictly subadditive in the moved mass so that the marginal transport cost per mass decreases the more mass is transported together (cf.\ Figure\,\ref{fig:transportCost}).
We will formally introduce branched transport later in Section \ref{sec:branchedTransportBrief}.

\paragraph{Urban planning problem.}
The second problem we are interested in is the \emph{urban planning problem}, introduced in \cite{Brancolini-Buttazzo}.
Here, the measures $\mu_+,\mu_-$ have the interpretation of the population and workplace densities, respectively.
In this case the cost depends on the public transportation network $\Sigma$, which is the object of optimisation.
In fact, one part of the cost associated with a network $\Sigma$ is the optimal value of \eqref{eq:monge},
where the cost $d$ depends on $\Sigma$ and is chosen in such a way that transportation on the network is cheaper than outside the network.
The other part models the cost for building and maintaining the network.
A detailed, rigorous description is given in Section \ref{sec:introUrbPlan}.

\paragraph{Patterns and graphs.}
Branched transport has been studied extensively and has several formulations.
Maddalena, Morel, and, Solimini in \cite{Maddalena-Morel-Solimini-Irrigation-Patterns} and Bernot, Caselles, and Morel in \cite{Bernot-Caselles-Morel-Traffic-Plans} proposed a Lagrangian formulation based on \emph{irrigation patterns} $\chi$ that describe the position of each mass particle $p$ at time $t$ by $\chi(p,t)$.
The difference between both articles is that in the latter the particle trajectories cannot be reparameterised without changing the transport cost.
The viewpoint introduced by Xia in \cite{Xia-Optimal-Paths} is instead Eulerian, where only the flux of particles is described, discarding its dependence on the time variable $t$.
A very interesting aspect of branched transport is its regularity theory as studied in several articles, among them \cite{Bernot-Caselles-Morel-Structure-Branched} and \cite{Xia-Interior-Regularity} for the geometric regularity, \cite{Morel-Santambrogio-Regularity} for the regularity of the tangents to the branched structure, \cite{Santambrogio-Landscape} and \cite{Brancolini-Solimini-Hoelder} for the regularity of the landscape function, \cite{Brancolini-Solimini-Fractal} for the fractal regularity of the minimisers.
Equivalence of the different models and formulations are instead the topic of \cite{Maddalena-Solimini-Transport-Distances}, \cite{Maddalena-Solimini-Synchronic}.
Branched transport can also be modelled with curves in the Wasserstein space as in \cite{Brancolini-Buttazzo-Santambrogio}, \cite{Bianchini-Brancolini}, \cite{Brasco-Santambrogio}.

\paragraph{Main result of the paper.}
The main result of this paper is a unified viewpoint of the branched transport problem and the urban planning problem.
Indeed, we show that also the urban planning problem can be cast in the Eulerian or flux-based and in the Lagrangian or pattern-based framework.
This involves the consideration of new functionals which are still subadditive in the moved mass, but not strictly so (cf.\ Figure\,\ref{fig:transportCost}), introducing several technical difficulties.
The main theorem, Theorem\,\ref{thm:urban_plannning_energy_equivalences}, proves the equivalence between the original, the Lagrangian, and the Eulerian formulation of urban planning.
One advantage of these equivalences is that now one can consider regularity questions in the most convenient formulation; as an example we show a single path property of optimal urban planning networks in Proposition\,\ref{prop:single_path_property_for_the_urban_planning_problem}.
For the sake of symmetry we also introduce an additional formulation of branched transport
which is based on the transport network $\Sigma$ as in the original urban planning formulation.
Its equivalence to the pattern- and flux-based formulations is stated in Theorem\,\ref{thm:equivalenceBrTpt}.

The following paragraphs introduce branched transport and urban planning more formally via cost functionals of the transport network $\Sigma$.
Section\,\ref{sec:branched_transport} then recalls the Eulerian and Lagrangian formulation of branched transport and proves their equivalence to the formulation based on $\Sigma$.
Section\,\ref{sec:urban_planning} puts forward the novel Eulerian and Lagrangian formulation of urban planning and states their equivalence,
the proof of which is deferred to the separate Section\,\ref{sec:proof_of_main_theorem}.
Section\,\ref{sec:urban_planning} also states a regularity result for (a subset of all) minimisers, the single path property, based on the model equivalences.

\begin{figure}
\centering
\setlength{\unitlength}{.1\linewidth}
\begin{picture}(3,2.3)
\put(0,0){\includegraphics[width=3\unitlength]{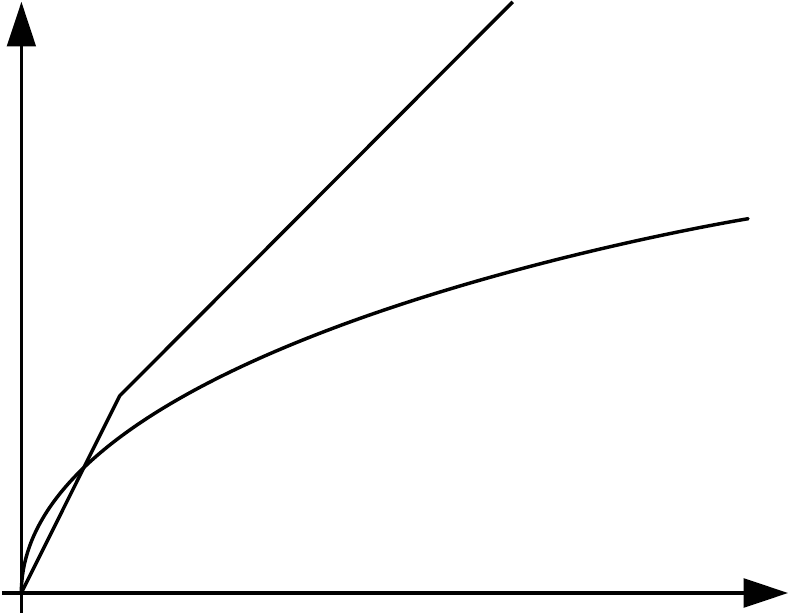}}
\put(2.80,-.15){$m$}
\put(2.9,1.5){$c^\alpha(m)$}
\put(2,2.3){$c^{\varepsilon,a}(m)$}
\end{picture}
\caption{The cost per transport distance is a subadditive function of the transported mass $m$
for the branched transport model ($c^\alpha(m)=m^\alpha$ for some $\alpha\in(0,1)$)
as well as the urban planning model ($c^{\varepsilon,a}(m)=\min(am,m+\varepsilon)$ for some $\varepsilon>0$, $a>1$).}
\label{fig:transportCost}
\end{figure}

\subsection{Branched transport}\label{sec:branchedTransportBrief}

Branched transport is the first network optimisation problem we consider. As already mentioned, it was introduced in \cite{Maddalena-Morel-Solimini-Irrigation-Patterns,Xia-Optimal-Paths} to model natural and artificial structures which show branching.
Since it is simpler to state, we here take the viewpoint of network optimisation and therefore introduce a new formulation based on the network $\Sigma$.
The original flux- and pattern-based formulations will be introduced in Section\,\ref{thm:equivalenceBrTpt},
and their equivalence to the new formulation will be shown in Theorem\,\ref{thm:equivalenceBrTpt}.

The network is modelled as a one-dimensional set $\Sigma\subset\R^n$, thought of as a collection of pipes through which a quantity with initial distribution $\mu_+$ is transported to achieve the distribution $\mu_-$.
The transport cost scales linearly with transport distance, but is strictly subadditive in the transported mass,
which models scale effects
(i.\,e., transporting an amount of mass in bulk is cheaper
than splitting it up and transporting the portions separately).
Precisely, the cost of moving the mass from $\mu_+$ to $\mu_-$
via a rectifiable pipe network $\Sigma$ is given by
\begin{equation*}
\brTptEn^{\alpha}[\Sigma]
=\inf_{\substack{\vel:\Sigma\to\R^n\setminus\{0\}\\\flux=\vel\hdone\restr\Sigma\\[1ex]\dv\flux=\mu_+-\mu_-}}\int_\Sigma c^\alpha(|\vel|)\,\de\hdone
\qquad\text{with }c^{\alpha}(m)=m^{\alpha}\,.
\end{equation*}

As it will be explained in more detail in Section \ref{sec:branched_transport}, the vector measure $\flux$ describes the mass flux, and $\vel$ denotes its Radon--Nikodym derivative with respect to $\hdone\restr\Sigma$, the one-dimensional Hausdorff measure on $\Sigma$.
The divergence is taken in the distributional sense,
and $\dv\flux=\mu_+-\mu_-$ identifies $\mu_+$ as a flux source and $\mu_-$ as a sink.
The parameter $\alpha\in(0,1)$ governs how strong the scale effect is,
i.\,e., how much cost can be saved by transporting the mass in bulk.
An optimal transport network is a minimizer of the functional $\brTptEn^{\alpha}$.

Existence of minimisers is shown in the Lagrangian or Eulerian framework in \cite{Maddalena-Morel-Solimini-Irrigation-Patterns,Xia-Optimal-Paths,Bernot-Caselles-Morel-Traffic-Plans} and many other works; regularity properties of minimisers are instead considered in \cite{Xia-Interior-Regularity,Bernot-Caselles-Morel-Structure-Branched,Morel-Santambrogio-Regularity,Santambrogio-Landscape,Brancolini-Solimini-Hoelder}; typically minimisers exhibit a type of fractal regularity (see \cite{Brancolini-Solimini-Fractal}).

\subsection{Urban planning}\label{sec:introUrbPlan}

The second energy functional we consider has been introduced as an urban planning model in \cite{Brancolini-Buttazzo}. Here, $\mu_+$ has the interpretation of a population density in an urban region, and $\mu_-$ represents the density of workplaces. The aim is to develop a public transportation network such that the population's daily commute to work is most efficient.

The transportation network is described as a collection of one-dimensional curves; more precisely, it is a set $\Sigma\subset\R^n$ with finite one-dimensional Hausdorff measure $\hdone(\Sigma)$. An employee can travel part of the commute via own means, which is associated with a cost $a>0$ per distance, or via the transportation network $\Sigma$ at the distance-specific cost $b>0$ with $b<a$. Hence, if a travelling path is represented by a curve $\theta : [0,1] \to \R^n$, its cost is given by (for ease of notation we here identify the path $\theta$ with its image $\theta([0,1])$)
\begin{equation*}
a\hdone(\theta\setminus\Sigma)+b\hdone(\theta\cap\Sigma)\,.
\end{equation*}
The minimum cost to get from a point $x\in\R^n$ to a point $y\in\R^n$ is given by the metric
\begin{equation}\label{eq:d_Sigma}
d_\Sigma(x,y)=\inf \{a\hdone(\theta\setminus\Sigma)+b\hdone(\theta\cap\Sigma) \ : \ \theta\in C_{x,y}\}
\end{equation}
where
\begin{equation}\label{eq:C_x_y}
 C_{x,y} = \{\theta : [0,1] \to \R^n \ : \ \theta \ \text{is Lipschitz}, \ \theta(0) = x, \ \theta(1) = y\}
\end{equation}
denotes Lipschitz curves connecting $x$ and $y$. The Wasserstein distance induced by this metric describes the minimum cost to connect the population density $\mu_+$ and the workplace density $\mu_-$ and is given by
\begin{equation}\label{eq:WdSigma}
\Wd{d_\Sigma}(\mu_+,\mu_-)=\inf_{\mu\in\Pi(\mu_+,\mu_-)}\int_{\R^n\times\R^n}d_\Sigma(x,y)\,\de\mu(x,y)\,.
\end{equation}
It is the infimum of formula \eqref{eq:monge} where we choose $d(x,y) = d_\Sigma(x,y)$.
Here, $\Pi(\mu_+,\mu_-)$ denotes the set of transport plans, i.\,e., the set of non-negative finite Borel measures on the product space $\R^n\times\R^n$ whose marginals are $\mu_+$ and $\mu_-$, respectively,
\begin{equation*}
\Pi(\mu_+,\mu_-)=\left\{\mu\in\fbm(\R^n\times\R^n)\,:\,\pushforward{\pi_1}\mu=\mu_+,\pushforward{\pi_2}\mu=\mu_-\right\}\,,
\end{equation*}
where $\pushforward{\pi_i}\mu$ denotes the pushforward of $\mu$ under the projection $\pi_i : \R^n \times \R^n \to \R^n,(x_1,x_2) \mapsto x_i$.% See Definition \ref{def:Wasserstein}.

The urban planning problem is the task to find an optimal network $\Sigma$ with respect to the transport cost $\Wd{d_\Sigma}(\mu_+,\mu_-)$ and an additional penalty $\hdone(\Sigma)$, the building and maintaining cost for the network. This leads to the energy functional
\begin{equation*}
\urbPlEn^{\varepsilon,a,b}[\Sigma]=\Wd{d_\Sigma}(\mu_+,\mu_-)+\varepsilon\hdone(\Sigma)\,,
\end{equation*}
to be minimised among all sets $\Sigma\subset\R^n$.
Existence of minimisers has been shown among all closed connected $\Sigma$ (see \cite{Brancolini-Buttazzo} or \cite[Chap.\,3]{BuPrSoSt09}). Without requiring connectedness, existence is proved in \cite[Chap.\,4]{BuPrSoSt09}.

We will actually set $b = 1$ and study $\urbPlEn^{\varepsilon,a}\equiv\urbPlEn^{\varepsilon,a,1}$ without loss of generality, since $\urbPlEn^{\varepsilon,a,b}(\Sigma) = b\urbPlEn^{\frac{\varepsilon}{b},\frac{a}{b},1}(\Sigma)$.

\subsection{Notation and useful notions}%\label{sec:notation}

Let us briefly fix some frequently used basic notation.

\begin{itemize}
 \item \textbf{Lebesgue measure.} $\lebesgue^n$ denotes the $n$-dimensional \emph{Lebesgue measure}.

 \item \textbf{Hausdorff measure.} $\hd^r$ denotes the $r$-dimensional \emph{Hausdorff measure}.

 \item \textbf{Non-negative finite Borel measures.} $\fbm(\R^n)$ denotes the set of \emph{non-negative finite Borel measures} on $\R^n$. Notice that these measures are countably additive and also regular by \cite[Thm.\,2.18]{Ru87}. The corresponding total variation norm is denoted by $\|\cdot\|_\fbm$.

 \item \textbf{(Signed or vector-valued) regular countably additive measures.} $\rca(\R^n)$ denotes the set of \emph{(signed or vector-valued) regular countably additive measures} on $\R^n$. The corresponding total variation norm is denoted by $\|\cdot\|_\rca$.

 \item \textbf{Weak-$*$ convergence.} The weak-$*$ convergence on $\fbm(\R^n)$ or $\rca(\R^n)$ is indicated by $\weakstarto$.

 \item \textbf{Restriction of a measure to a set.} Let $(X,\mathcal A,\mu)$ be a measure space and $Y \subset X$ with $Y\in\mathcal A$. The measure $\mu\restr Y$ is the measure defined by
  \begin{displaymath}
  \mu\restr Y(A) = \mu(A \cap Y)\,.
  \end{displaymath}

 \item \textbf{Pushforward of a measure.} For a measure space $(X,\Mcal,\mu)$, a measurable space $(Y,\Ncal)$, and a measurable map $T : X \to Y$, the \emph{pushforward} of $\mu$ under $T$ is the measure $T_\#\mu$ on $(Y,\Ncal)$ defined by
       \begin{displaymath}
        T_\#\mu(B) = \mu(T^{-1}(B)) \quad \text{for all $B \in \Ncal$}.
       \end{displaymath}

 \item \textbf{Continuous and smooth functions.} $\cont_c(\R^n)$ and $\smooth(\R^n)$ denote the set of continuous and smooth functions, respectively, with compact support on $\R^n$.

 \item \textbf{Absolutely continuous functions.} $\AC(I)$ denotes the set of \emph{absolutely continuous functions} on the interval $I\subset\R$.

 \item \textbf{Lipschitz functions.} $\Lip(I)$ denotes the set of \emph{Lipschitz functions} on the compact domain $I$.
       %$\Lip_K(I)\subset\Lip(I)$ denotes the set of \emph{Lipschitz functions} with Lipschitz constant not exceeding $K$.

 \item \textbf{Characteristic function of a set.} Let $X$ be a set and $A \subseteq X$. The \emph{characteristic function} of the set $A$ is defined as
       \begin{displaymath}
        \setchar{A}:X\to\{0,1\}\,,\quad
        \setchar{A}(x) = \begin{cases}
                          1 & x \in A,\\
                          0 & x \notin A.
                         \end{cases}
       \end{displaymath}

 \item \textbf{Dirac mass.} Let $x \in \R^n$. The \emph{Dirac mass} in $x$ is the distribution $\delta_x$ defined by
 \begin{displaymath}
 \langle \delta_x,\varphi \rangle = \varphi(x) \text{ for all } \varphi \in \smooth(\R^n).
 \end{displaymath}
 The Dirac distribution is the measure $\delta_x(A)=1$ if $x\in A$ and $\delta_x(A)=0$ else.
\end{itemize}

% In the upper and lower bound proofs we will furthermore employ the following abbreviations.
%
% \begin{itemize}
%  \item \textbf{Cross section.} For every $0 \leq t \leq 1$, the set $\{x_n=t\}=\{x\in\R^n\ :\ x_n=t\}$ will be called a \emph{cross-section}. We will also use analogous notations such as $\{x_n<t\}$ with the obvious meaning.
%
%  \item \textbf{Projection on the hyperplane $x_n = 0$.} By $x'$ we mean the \emph{projection} of $x = (x_1,\ldots,x_n)$ on the hyperplane $x_n = 0$, $x'=(x_1,\ldots,x_{n-1},0)$.
%
%  \item \textbf{Tubular neighbourhood.} $B_s(U)$ is the \emph{tubular neighbourhood} of radius $s$ of the set $U$,
%        \begin{displaymath}
%         B_s(U) = \{x \in \R^n \ : \ \inf_{y \in U} |x - y| \leq s\}.
%        \end{displaymath}
%
%  \item $C\equiv C(n)$ identifies $C$ as a constant depending only on the dimension $n$; if the constant depends on other parameters as well, an analogous notation is used.
%
%  \item When we write $A \lesssim B$ or $A\gtrsim B$, we mean that there exists a constant $C\equiv C(n)$ such that $A \leq C B$ or $B \leq C A$, respectively. $A\sim B$ stands for $A \lesssim B$ and $A\gtrsim B$.
%
%  \item $\omega_n$ denotes $n$-dimensional measure of the unit ball in $\R^n$, that is $\lebesgue^n(B_1)$.
%
% \end{itemize}
%
Finally, for the reader's convenience we compile here a list of the most important symbols with references to the corresponding definitions.

\begin{itemize}
 \item $I = [0,1]$: The unit interval.

 \item $d_\Sigma$, $\Wd{d_\Sigma}$: Urban planning transport metric and transport cost (see \eqref{eq:d_Sigma}-\eqref{eq:WdSigma}).

 \item $C_{x,y}$: Lipschitz paths connecting $x$ and $y$ (see \eqref{eq:C_x_y}).
 
 \item $\flux_G$: Flux associated with a discrete graph $G$ (see \eqref{eqn:graphFlux}).

 \item $(\reSpace,\Bcal(\reSpace),\reMeasure)$: Reference space of all particles (Definition\,\ref{def:reference_space}).

 \item $\chi$: Irrigation pattern of all particles (Definition\,\ref{def:irrigation_pattern}).

 \item $[x]_\chi, m_\chi(x)$: Solidarity class of $x$ and its mass (Definition\,\ref{def:solidarity_classes}).

 \item $\mu_+^\chi, \mu_-^\chi$: Irrigating and irrigated measure (Definition \ref{def:irrigation}).
 
 \item $s_\alpha^\chi$, $\repsilonachi$: Cost densities of branched transport (Definition \ref{eqn:costDensityBrTpt}) and urban planning (Definition \ref{def:urbPlPatternForm}).

 \item $\Theta$: Lipschitz curves on $I$, $\Theta = \Lip(I)$. This notation is introduced in the framework of transport path measures (Definition\,\ref{def:transport_path_measures}).

 \item $\TPM(\mu_+,\mu_-)$: Transport path measures moving $\mu_+$ onto $\mu_-$ (Definition\,\ref{def:transport_path_measures}).
\end{itemize}

% At last, we recall briefly the Wasserstein distance. We refer to \cite[Chap.\,1]{Villani-Topics-Optimal-Transport} for a complete account on Wasserstein distances, spaces and their properties.
% 
% \begin{definition}[Wasserstein distance]\label{def:Wasserstein}
%  Let $\mu_+,\mu_-$ be finite non-negative\marginpar{added ``non-negative''} Borel measures with equal mass $\|\mu_+\|_\fbm=\|\mu_-\|_\fbm$. Given $p \geq 1$, the \emph{$p$\textsuperscript{th} Wasserstein distance} between $\mu_+$ and $\mu_-$ is
%  \begin{displaymath}
%   \Wd{p}(\mu_+,\mu_-) = \left(\inf_{\mu\in\Pi(\mu_+,\mu_-)} \int_{\R^n\times\R^n} |x - y|^p \de\mu(x,y)\right)^{\frac1p},
%  \end{displaymath}
%  where $\Pi(\mu_+,\mu_-) = \{\mu\in\fbm(\R^n\times\R^n) \ : \ \mu(A \times \R^n) = \mu_+(A),\ \mu(\R^n \times B) = \mu_-(B)\text{ for all Borel sets }A,B\subset\R^n\}$.
% \end{definition}

\section{Branched transport formulations}\label{sec:branched_transport}

In this section we will present the Eulerian or flux-based and the Lagrangian or pattern-based formulations of the \emph{branched transport problem} and state their equivalence to the formulation from Section\,\ref{sec:branchedTransportBrief}. We begin with the Eulerian formulation.

\subsection{Flux-based formulation}

We start considering the formulation given by Xia in \cite{Xia-Optimal-Paths}.

Let $\mu_+ = \sum_{i = 1}^k a_i\delta_{x_i}$, $\mu_- = \sum_{j = 1}^l b_j\delta_{y_j}$ be discrete finite non-negative measures with $a_i, b_j > 0$, $x_i, y_j \in \R^n$.
Suppose also that they have the same mass,
\begin{displaymath}
 \sum_{i = 1}^k a_i = \sum_{j = 1}^l b_j\,.
\end{displaymath}

\begin{remark}
The object of the next definition is called \emph{transport path} in \cite{Xia-Optimal-Paths}, and this is the commonly used term in the branched transport literature.
We deliberately employ the term \emph{mass flux} instead, since it does not only encode a path, but also the amount of mass transported. This way we avoid confusion when referring to actual paths as one-dimensional curves.
\end{remark}

\begin{definition}[Discrete mass flux and cost function]\label{def:mass_flux_cost_function_branched_transport_discrete_case}
A \emph{discrete mass flux between $\mu_+$ and $\mu_-$} is a weighted directed graph $G$ with vertices $V(G)$, straight edges $E(G)$, and edge weight function $w : E(G) \to [0,\infty)$ such that the following conditions are satisfied.
Denoting $e^-$ and $e^+$ the initial and final point of edge $e$, we require the following \emph{mass preserving conditions},
\begin{itemize}
 \item $a_i = \sum_{e \in E(G), e^- = x_i} w(e) - \sum_{e \in E(G), e^+ = x_i} w(e)$ for $i=1,\ldots,k$,
 \item $b_j = \sum_{e \in E(G), e^+ = y_j} w(e) - \sum_{e \in E(G), e^- = y_j} w(e)$ for $j=1,\ldots,l$,
 \item $0 = \sum_{e \in E(G), e^+ = v} w(e) - \sum_{e \in E(G), e^- = v} w(e)$ for $v \in V(G)\setminus\{x_1,\ldots,x_k,y_1,\ldots,y_l\}$.
\end{itemize}
Given a parameter $\alpha\in(0,1)$, we define the transport cost per transport length as $c^\alpha(w) = w^\alpha$ (cf.\ Figure\,\ref{fig:transportCost}).
The \emph{cost function} $\XiaEn^\alpha$ associated with a mass flux $G$ is defined as
\begin{displaymath}
 \XiaEn^\alpha(G) = \sum_{e \in E(G)} c^\alpha(w(e))\,l(e) = \sum_{e \in E(G)} w(e)^\alpha\, l(e)\,,
\end{displaymath}
where $l(e)$ is the length of edge $e$.
\end{definition}

In order to state the branched transport problem in the case of non-discrete finite Borel measures, we need to replace graphs with measures.

\begin{definition}[Graphs as vectorial measures]\label{def:graphs_as_vectorial_measures}
Consider a weighted oriented graph $G$.
Every edge $e\in E(G)$ with direction $\hat e=\frac{e^+-e^-}{|e^+-e^-|}$ can be identified with the vector measure $\mu_e = (\hdone\restr e)\, \hat e$, and the graph can be identified with the vector measure
\begin{equation}\label{eqn:graphFlux}
 \flux_G = \sum_{e \in E(G)} w(e)\mu_e\,.
\end{equation}
All mass preserving conditions satisfied by a discrete mass flux $G$ between $\mu_+$ and $\mu_-$ summarise as $\dv\flux_G = \mu_+ - \mu_-$ (in the distributional sense).
\end{definition}

The identification of graphs with vector measures motivates the definition of a sum operation between graphs that we state here for later usage.
\begin{definition}[Sums of graphs]\label{def:sums_of_graphs}
If $G_1$ and $G_2$ are weighted oriented graphs, then $G_1+G_2$ is unique graph such that
\begin{displaymath}
 \flux_{G_1+G_2} = \flux_{G_1} + \flux_{G_2}.
\end{displaymath}
%It is also possible to sum graphs and edges or paths in graphs (path in graphs are introduced in Definition \ref{def:paths_in_graphs}).
\end{definition}

\begin{definition}[Continuous mass flux and cost function]
Let $\mu_+,\mu_- \in \fbm(\R^n)$ of equal mass. A vector measure $\flux\in\rca(\R^n;\R^n)$ is a \emph{mass flux} between $\mu_+$ and $\mu_-$, if there exist sequences of discrete measures $\mu_+^k$, $\mu_-^k$ with $\mu_+^k \weakstarto \mu_+$, $\mu_-^k \weakstarto \mu_-$, and a sequence of vector measures $\flux_{G_k}$ with $\flux_{G_k} \weakstarto \flux$, $\dv \flux_{G_k} = \mu_+^k - \mu_-^k$. Note that $\dv\flux=\mu_+-\mu_-$ follows by continuity w.r.t. the weak-$*$ topology.

A sequence $(\mu_+^k,\mu_-^k,\flux_{G_k})$ satisfying the previous properties is called \emph{approximating graph sequence}, and we write $(\mu_+^k,\mu_-^k,\flux_{G_k}) \weakstarto (\mu_+,\mu_-,\flux)$.

If $\flux$ is a mass flux between $\mu_+$ and $\mu_-$, the transport cost $\XiaEn^\alpha$ is defined as
\begin{equation}\label{eq:functional_XiaEn}
 \XiaEn^\alpha(\flux) = \inf\left\{\liminf_{k \to \infty} \XiaEn^\alpha(G_k) \ : \ (\mu_+^k,\mu_-^k,\flux_{G_k}) \weakstarto (\mu_+,\mu_-,\flux)\right\}.
\end{equation}
\end{definition}

\begin{problem}[Branched transport problem, flux formulation]
Given $\mu_+,\mu_- \in \fbm(\R^n)$, the \emph{branched transport problem} is
\begin{equation*}
 \min\{\XiaEn^\alpha(\flux) \ : \ \flux \text{ mass flux between } \mu_+\text{ and }\mu_-\}\,.
\end{equation*}
\end{problem}

\begin{remark}[Existence of minimisers]
A minimiser exists for $\mu_+,\mu_-\in\fbm(\R^n)$ with compact support \cite{Xia-Optimal-Paths}. The minimum value $d_\alpha(\mu_+,\mu_-)$ is a distance on $\fbm(\R^n)$, which induces the weak-$*$ convergence (see \cite{Xia-Optimal-Paths}).
\end{remark}

\begin{remark}\label{rem:flux_equivalent}
It can be shown (see \cite{Xia-Optimal-Paths}, \cite{Bernot-Caselles-Morel-Structure-Branched}) that a mass flux $\flux$ with finite cost can be seen as a rectifiable set $\Sigma$ together with a real multiplicity $\tilde\vel : \Sigma \to (0,\infty)$ and an orientation $\hat e : \Sigma \to \R^n$, $|\hat e| = 1$, such that
\begin{equation*}
 \flux = \tilde\vel \hat e \,(\hdone\restr\Sigma).
\end{equation*}
The quantity $\vel = \tilde\vel \hat e$ describes the mass flux at each point in $\Sigma$.
In that case we have
\begin{displaymath}
 \XiaEn^\alpha(\flux) = \int_\Sigma \tilde\vel^\alpha \de\hdone = \int_\Sigma |\vel|^\alpha \de\hdone\,.
\end{displaymath}
\end{remark}

\subsection{Pattern-based formulation}

In this section we recall the Lagrangian or pattern-based formulation (see \cite{Maddalena-Morel-Solimini-Irrigation-Patterns}, \cite{Bernot-Caselles-Morel-Traffic-Plans}, \cite{Maddalena-Solimini-Synchronic}).

\begin{definition}[Reference space]\label{def:reference_space}
Here we consider a separable uncountable metric space $\reSpace$ endowed with the $\sigma$-algebra $\Bcal(\reSpace)$ of its Borel sets and a positive finite Borel measure $\reMeasure$ with no atoms. We refer to $(\reSpace,\Bcal(\reSpace),\reMeasure)$ as the \emph{reference space}.

The reference space can be interpreted as the space of all particles that will be transported from a distribution $\mu_+$ to a distribution $\mu_-$.
\end{definition}

\begin{remark}%\label{rem:isomorphism_of_measure_spaces}
Let $(X,\Mcal,\mu)$ and $(Y,\Ncal,\nu)$ be measure spaces. A map $T : X \to Y$ is said to be an isomorphism of measure spaces, if
\begin{itemize}
 \item $T$ is one-to-one,
 \item for every $N \in \Ncal$, $T^{-1}(N) \in \Mcal$ and $\mu(T^{-1}(N)) = \nu(N)$,
 \item for every $M \in \Mcal$, $T(M) \in \Ncal$ and $\mu(M) = \nu(T(M))$.
\end{itemize}
Recall that if $\reSpace$ is a complete separable metric space and $\reMeasure$ is a positive Borel measure with no atoms (hence $\reSpace$ is uncountable), then $(\reSpace,\Bcal(\reSpace),\reMeasure)$ is isomorphic to the standard space $([0,1],\Bcal([0,1]),m\lebesgue^1 \restr [0,1])$ with $m=\reMeasure(\reSpace)$ (for a proof see \cite[Prop.\,12 or Thm.\,16 in Sec.\,5 of Chap.\,15]{Royden-Real-Analysis} or \cite[Chap.\,1]{Villani-Transport-Old-New}). As a consequence, the following definitions and results are independent of the particular choice of the reference space, and we may assume it to be the standard space without loss of generality.
\end{remark}

\begin{definition}[Irrigation pattern]\label{def:irrigation_pattern}
Let $I = [0,1]$ and $(\reSpace,\Bcal(\reSpace),\reMeasure)$ be our reference space. An \emph{irrigation pattern} is a measurable function $\chi : \reSpace \times I \to \R^n$ such that for almost all $p\in\reSpace$ we have $\chi_p \in \AC(I)$.

A pattern $\tilde\chi$ is \emph{equivalent} to $\chi$ if the images of $\reMeasure$ through the maps $p \mapsto \chi_p, p \mapsto \tilde\chi_p$ are the same. Because of that, a pattern $\chi$ can be regarded as a map $\chi : \reSpace \to \AC(I)$.

For intuition, $\chi_p$ can be viewed as the path followed by the particle $p$. The image of $\chi_p$, that is $\chi_p(I)$, is called a \emph{fibre} and will frequently be identified with the particle $p$.

Here we follow the setting recently introduced in \cite{Maddalena-Solimini-Synchronic}.
\end{definition}

\begin{definition}[Solidarity class]\label{def:solidarity_classes}
For every $x\in\R^n$ we consider the set
\begin{equation}
[x]_\chi = \{q \in \reSpace \ : \ x \in \chi_q(I)\}\label{eq:solidarity_classes}
\end{equation}
of all particles flowing through $x$.
The total \emph{mass} of those particles is given by
\begin{equation*}
m_\chi(x) = \reMeasure([x]_\chi)\,.
\end{equation*}
\end{definition}

\begin{definition}[Cost density, cost functional]\label{eqn:costDensityBrTpt}
For $0 \leq \alpha \leq 1$ we consider the following \emph{cost density},
\begin{equation*}
s_\alpha^\chi(x) = c^\alpha(m_\chi(x))/m_\chi(x) = [m_\chi(x)]^{\alpha - 1},
\end{equation*}
where $c^\alpha$ is the transport cost per transport length from Definition\,\ref{def:mass_flux_cost_function_branched_transport_discrete_case} and we set $s_\alpha^\chi(x) = \infty$ for $m_\chi(x)=0$.
The \emph{cost functional} associated with irrigation pattern $\chi$ is
\begin{equation}\label{eq:functional_MMSEn}
\MMSEn^\alpha(\chi) = \int_{\reSpace \times I} s_\alpha^\chi(\chi_p(t)) |\dot\chi_p(t)|\,\de\reMeasure(p)\, \de t\,.
\end{equation}
The functional $\MMSEn^\alpha$ in the above form has been introduced by Bernot, Caselles, and Morel in \cite{Bernot-Caselles-Morel-Traffic-Plans}.
\end{definition}

\begin{definition}[Irrigating and irrigated measure]\label{def:irrigation}
Let $\chi$ be an irrigation pattern. Let $i_0^\chi,i_1^\chi:\reSpace \to \R^n$ be defined as $i_0^\chi(p) = \chi(p,0)$ and $i_1^\chi(p) = \chi(p,1)$.
The \emph{irrigating measure} and the \emph{irrigated measure} are defined as the pushforward of $\reMeasure$ via $i_0^\chi$ and $i_1^\chi$, respectively,
\begin{displaymath}
 \mu_+^\chi = \pushforward{(i_0^\chi)}{\reMeasure}\,, \quad \mu_-^\chi = \pushforward{(i_1^\chi)}{\reMeasure}\,.
\end{displaymath}
\end{definition}

\begin{problem}[Branched transport problem, pattern formulation]\label{prob:branched_transport_problem}
Given $\mu_+,\mu_- \in \fbm(\R^n)$, the \emph{branched transport problem} is
\begin{equation*}
 \min\{\MMSEn^\alpha(\chi) \ : \ \mu_+^\chi = \mu_+\text{ and }\mu_-^\chi = \mu_-\}\,.
\end{equation*}
\end{problem}

\begin{remark}[Existence of minimisers]
Given $\mu_+,\mu_- \in \fbm(\R^n)$ with compact support, Problem \ref{prob:branched_transport_problem} has a solution \cite{Maddalena-Solimini-Transport-Distances}.
\end{remark}

\subsection{Reparameterisation}

In Definition\,\ref{def:irrigation_pattern} one may equivalently require $\chi_p \in \Lip(I)$ for almost all $p \in \reSpace$, instead of $\chi_p \in \AC(I)$,
which is the content of Propositions\,\ref{prop:fixed_interval_reparameterisation_for_arc-length_parameterised_patterns} and \ref{prop:reparameterised_patterns_have_the_same_cost} below.
This becomes necessary as we will later refer to results from works using either one or the other formulation.
In addition, it allows us to assume Lipschitz continuous fibres throughout the remainder of the article.

Let us first recall the following result, whose proof can be found in \cite[Lem.\,1.1.4]{Ambrosio-Gigli-Sarave-Gradient-Flows}.

\begin{lemma}[Arc-length reparameterisation for $\AC$]\label{lem:arc-length_reparameterisation_for_AC}
Let $v \in \AC([a,b])$ and let $L = \int_a^b |\dot v(t)|\,\de t$ be its length. Let
\begin{align*}
 &\tilde s(t) = \textstyle\int_a^t |\dot v(\tau)|\,\de\tau\,,\\
 &\tilde t(s) = \inf\{t \in [a,b] \ : \ \tilde s(t) = s\}\,,
\end{align*}
then the following holds true,
\begin{itemize}
 \item $\tilde s \in \AC([a,b])$ with $\tilde s(a) = 0$, $\tilde s(b) = L$,
 \item $\tilde v = v \circ\tilde t$ satisfies $v = \tilde v \circ \tilde s$, $\tilde v \in \Lip([0,L])$, and $|\dot{\tilde v}| = 1$ a.\,e.\ in $[0,L]$.
\end{itemize}
\end{lemma}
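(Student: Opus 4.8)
The plan is to verify the stated properties of $\tilde s$ and $\tilde t$ directly from the definition of absolute continuity, and then to obtain the unit-speed property of $\tilde v$ from an elementary length computation.

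First I would record the properties of $\tilde s$. Since $|\dot v|\in L^1([a,b])$, the function $\tilde s(t)=\int_a^t|\dot v(\tau)|\,\de\tau$ is non-decreasing and absolutely continuous on $[a,b]$, with $\tilde s(a)=0$, $\tilde s(b)=L$, and $\dot{\tilde s}=|\dot v|$ a.e. In particular $\tilde s$ is continuous and surjective onto $[0,L]$, so by the intermediate value theorem the generalised inverse $\tilde t(s)=\inf\{t\in[a,b]:\tilde s(t)=s\}$ is well defined for every $s\in[0,L]$ and satisfies $\tilde s(\tilde t(s))=s$; moreover $\tilde t$ is non-decreasing and left-continuous, hence Borel measurable, which makes $\tilde v=v\circ\tilde t$ measurable.

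Next I would establish that $\tilde v$ is well defined and that $v=\tilde v\circ\tilde s$. The key observation is a constancy property: if $t_1<t_2$ with $\tilde s(t_1)=\tilde s(t_2)$, then $\int_{t_1}^{t_2}|\dot v|\,\de t=0$, so $\dot v=0$ a.e.\ on $[t_1,t_2]$, and by absolute continuity of $v$ this forces $v$ to be constant on $[t_1,t_2]$. Hence $v$ is constant on each level set of $\tilde s$, so $v(\tilde t(s))$ is independent of the chosen preimage and in fact $v(t)=v(\tilde t(\tilde s(t)))$ for all $t$, i.e.\ $v=\tilde v\circ\tilde s$. For the Lipschitz bound, given $0\le s_1<s_2\le L$ I would estimate
\begin{equation*}
 |\tilde v(s_2)-\tilde v(s_1)|=|v(\tilde t(s_2))-v(\tilde t(s_1))|\le\int_{\tilde t(s_1)}^{\tilde t(s_2)}|\dot v|\,\de t=\tilde s(\tilde t(s_2))-\tilde s(\tilde t(s_1))=s_2-s_1\,,
\end{equation*}
so $\tilde v\in\Lip([0,L])$ with Lipschitz constant $1$; in particular $\tilde v$ is differentiable a.e.\ with $|\dot{\tilde v}|\le1$ a.e.

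Finally I would upgrade $|\dot{\tilde v}|\le1$ to $|\dot{\tilde v}|=1$ a.e.\ by a length argument. Using the a.e.\ chain rule $\frac{\de}{\de t}(\tilde v\circ\tilde s)(t)=\dot{\tilde v}(\tilde s(t))\,\dot{\tilde s}(t)$ (valid since $\tilde v$ is Lipschitz and $\tilde s$ absolutely continuous) together with the change-of-variables formula for the monotone absolutely continuous map $\tilde s$ applied to the non-negative integrand $|\dot{\tilde v}|$, I obtain
\begin{equation*}
 \int_0^L|\dot{\tilde v}(s)|\,\de s=\int_a^b|\dot{\tilde v}(\tilde s(t))|\,|\dot v(t)|\,\de t=\int_a^b|\dot v(t)|\,\de t=L\,.
\end{equation*}
Combined with $|\dot{\tilde v}|\le1$ a.e.\ on $[0,L]$, this forces $|\dot{\tilde v}|=1$ a.e. The main technical point requiring care is the justification of the chain rule and change-of-variables step for the composition of the Lipschitz map $\tilde v$ with the non-injective absolutely continuous map $\tilde s$ (equivalently, controlling the contribution of the set where $\dot v=0$); the remaining steps are routine consequences of absolute continuity.
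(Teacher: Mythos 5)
Your argument is correct and complete; note that the paper does not prove this lemma itself but cites \cite[Lem.\,1.1.4]{Ambrosio-Gigli-Sarave-Gradient-Flows}, and your proof is essentially the standard one given there: the constancy of $v$ on level sets of $\tilde s$, the $1$-Lipschitz estimate via $|v(\tilde t(s_2))-v(\tilde t(s_1))|\le\tilde s(\tilde t(s_2))-\tilde s(\tilde t(s_1))=s_2-s_1$, and the saturation $|\dot{\tilde v}|=1$ a.e.\ from $\int_0^L|\dot{\tilde v}|\,\de s=L$. The only point where you diverge slightly is the last step: you obtain the identity $\int_0^L|\dot{\tilde v}|\,\de s=\int_a^b|\dot v|\,\de t$ via the a.e.\ chain rule and a change of variables for the non-injective monotone map $\tilde s$, which, as you correctly flag, requires a Serrin--Varberg-type justification (on the preimage of a null set of $[0,L]$ one has $\dot{\tilde s}=0$ a.e., and the product $\dot{\tilde v}(\tilde s(t))\,\dot{\tilde s}(t)$ is read as $0$ there). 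This can be avoided entirely by observing that the total variation is invariant under the monotone surjective reparameterisation $\tilde s$, so that $\int_0^L|\dot{\tilde v}|\,\de s=\mathrm{Var}(\tilde v;[0,L])=\mathrm{Var}(v;[a,b])=L$ directly; that is the cleaner route and the one effectively taken in the cited reference, but your version is sound once the flagged step is spelled out.
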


\begin{proposition}[Arc-length reparameterisation of patterns]\label{prop:arc-length_reparameterisation_for_patterns}
Let $\chi : \reSpace \times I \to \R^n$ be an irrigation pattern. Suppose $\chi$ has finite cost $\MMSEn^\alpha(\chi) < \infty$, and define
\begin{align*}
 \tilde s &: \reSpace \times I \to [0,\infty), && \tilde s(p,t) = \textstyle\int_0^t |\dot\chi(p,\tau)|\,\de\tau\,,\\
 \tilde t &: \reSpace \times [0,\infty) \to I\cup\{\infty\}, && \tilde t(p,s) = \inf\{t \in I \ : \ \tilde s(p,t) = s\}\,,\\
 \tilde\chi&: \reSpace \times [0,\infty) \to \R^n, && \tilde\chi(p,s) = \chi(p,\tilde t(p,s))\,,
\end{align*}
where for notational simplicity we define the infimum of the empty set as $\infty$.
Then, for almost all $p \in \reSpace$ and all $s \in [0,\infty)$, $\tilde\chi(p,\cdot)$ is arc-length parameterised, and $\tilde\chi(\cdot,s)$ is measurable.
\end{proposition}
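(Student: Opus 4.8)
The plan is to separate the statement into its two logically independent halves — the \emph{pointwise-in-$p$} claim that each reparameterised fibre $\tilde\chi(p,\cdot)$ is arc-length parameterised, and the \emph{measurability in $p$} of the slice $\tilde\chi(\cdot,s)$ for fixed $s$ — and to dispatch the first by a fibrewise application of Lemma~\ref{lem:arc-length_reparameterisation_for_AC} and the second by a generalised-inverse argument. Before either half, I would record the measurability preliminaries: since $\chi$ is measurable on $\reSpace\times I$ by Definition~\ref{def:irrigation_pattern}, every difference quotient $(p,t)\mapsto h^{-1}(\chi(p,t+h)-\chi(p,t))$ is jointly measurable, hence so is $\dot\chi$ (a countable $\limsup$ of these over rational $h$, defined for a.e.\ $(p,t)$ since $\chi_p\in\AC(I)$ for a.e.\ $p$), and by Tonelli $\tilde s(p,t)=\int_0^t|\dot\chi(p,\tau)|\,\de\tau$ is jointly measurable; moreover, on the full-measure set $N:=\{p\in\reSpace:\chi_p\in\AC(I)\}$, the map $\tilde s(p,\cdot)$ is continuous and nondecreasing with $\tilde s(p,0)=0$ and $\tilde s(p,1)=L(p):=\int_0^1|\dot\chi_p(t)|\,\de t<\infty$.

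For the arc-length part I would fix $p\in N$ and apply Lemma~\ref{lem:arc-length_reparameterisation_for_AC} to $v=\chi_p$ on $[a,b]=I$. This directly gives that $\chi_p\circ\tilde t(p,\cdot)\in\Lip([0,L(p)])$ has unit speed a.e.\ on $[0,L(p)]$ and satisfies $\chi_p=(\chi_p\circ\tilde t(p,\cdot))\circ\tilde s(p,\cdot)$; since $\chi_p\circ\tilde t(p,\cdot)$ is exactly $s\mapsto\tilde\chi(p,s)$ on $[0,L(p)]$, that fibre is arc-length parameterised there. For $s>L(p)$ the set $\{t\in I:\tilde s(p,t)=s\}$ is empty, so with the natural convention $\chi(p,\infty):=\chi_p(1)$ the curve $\tilde\chi(p,\cdot)$ equals the constant $\chi_p(1)$ on $[L(p),\infty)$; hence $\tilde\chi(p,\cdot)$ is arc-length parameterised on all of $[0,\infty)$ (i.e.\ $1$-Lipschitz, of speed $1$ while $s<L(p)$ and constant afterwards) for every $p\in N$, that is, for almost every $p$. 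On the null set $\reSpace\setminus N$ the value of $\tilde\chi$ is irrelevant and can be set to $0$.

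For the measurability of $\tilde\chi(\cdot,s)$ I would first handle the generalised inverse. Fix $s\in[0,\infty)$; for $p\in N$, continuity, monotonicity and the intermediate value theorem for $\tilde s(p,\cdot)$ yield, for every $\tau\in I$, the equivalence $\tilde t(p,s)\le\tau\iff\tilde s(p,\tau)\ge s$, so that $\{p\in N:\tilde t(p,s)\le\tau\}=\{p\in N:\tilde s(p,\tau)\ge s\}$ is measurable by the preliminary step, and $\{p:\tilde t(p,s)=\infty\}=\{p\in N:L(p)<s\}$ is measurable as well; thus $p\mapsto\tilde t(p,s)$ is a measurable map $N\to I\cup\{\infty\}$. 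On $\{p\in N:L(p)\ge s\}$ the map $p\mapsto(p,\tilde t(p,s))$ is measurable into $(\reSpace\times I,\Bcal(\reSpace)\otimes\Bcal(I))=(\reSpace\times I,\Bcal(\reSpace\times I))$, so composing with the (jointly Borel) $\chi$ makes $p\mapsto\tilde\chi(p,s)$ measurable there; on $\{p\in N:L(p)<s\}$ we have $\tilde\chi(p,s)=\chi_p(1)=i_1^\chi(p)$, which is measurable; gluing over this measurable partition of $\reSpace$ gives the claim.

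I expect the only genuinely delicate point to be this last part, and specifically the measurability bookkeeping it rests on: one must be certain that $\chi$ may be taken jointly measurable in $(p,t)$, that $\dot\chi$ and $\tilde s$ inherit this, and that the generalised inverse $\tilde t(\cdot,s)$ is measurable — after which $p\mapsto\chi(p,\tilde t(p,s))$ is handled by the standard fact that a jointly measurable function precomposed with $p\mapsto(p,g(p))$, $g$ measurable, is again measurable. (If $\chi$ is only assumed measurable with respect to the completion, the same scheme works after the usual modification on null sets.) By contrast, the arc-length statement is essentially an immediate fibrewise corollary of Lemma~\ref{lem:arc-length_reparameterisation_for_AC}; note that the hypothesis $\MMSEn^\alpha(\chi)<\infty$ is not really needed for this particular statement — it only reproves, via $s_\alpha^\chi\ge\reMeasure(\reSpace)^{\alpha-1}$, the finiteness of $L(p)$ already implied by $\chi_p\in\AC(I)$ — but it is natural to keep it for uniformity with Proposition~\ref{prop:reparameterised_patterns_have_the_same_cost}.
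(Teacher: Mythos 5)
Your treatment of the arc-length half coincides with the paper's (a fibrewise application of Lemma~\ref{lem:arc-length_reparameterisation_for_AC}), but on the measurability half you take a lighter route: you fix $s$, obtain measurability of the generalised inverse $\tilde t(\cdot,s)$ from the equivalence $\tilde t(p,s)\le\tau\Leftrightarrow\tilde s(p,\tau)\ge s$ (the same identity the paper uses, there in the form $\{\tilde t(\cdot,t_k)\le\lambda\}=\{\tilde s(\cdot,\lambda)\ge t_k\}$), and then compose $p\mapsto(p,\tilde t(p,s))$ with the jointly measurable $\chi$. The paper instead proves measurability of $\tilde t$ as a map on $\reSpace\times[0,\infty)$ and of $\tilde\chi=\chi\circ(\Id_\reSpace\times\tilde t)$, and spends most of the proof showing that $(\Id_\reSpace\times\tilde t)^{-1}$ maps $(\reMeasure\otimes\lebesgue^1)$-null sets to null sets. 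That extra step is what legitimises the composition when $\chi$ is only measurable up to null sets (i.e.\ with respect to the completed product $\sigma$-algebra, the natural setting since patterns are identified up to equivalence), and it also gives joint measurability of $\tilde\chi$, which is what is subsequently needed to integrate the cost of the reparameterised pattern over $\reSpace\times I$. If one reads Definition~\ref{def:irrigation_pattern} as demanding product-(Borel-)measurability of $\chi$, your slice argument is correct and is a shorter proof of the literal statement.

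The genuinely weak point is your parenthetical claim that the completion case ``works after the usual modification on null sets,'' together with the assertion that the hypothesis $\MMSEn^\alpha(\chi)<\infty$ is superfluous. For fixed $s$ the graph $\{(p,\tilde t(p,s))\,:\,p\in\reSpace\}$ is itself a $(\reMeasure\otimes\lebesgue^1)$-null set, so replacing $\chi$ by a Borel representative $\chi_0$ with $\chi=\chi_0$ off a product-null set $E_0$ may change $\tilde\chi(\cdot,s)$ on $\{p\,:\,(p,\tilde t(p,s))\in E_0\}$, and this set can have positive $\reMeasure$-measure (e.g.\ $E_0=\reSpace\times\{t_0\}$ with $\tilde t(p,s)=t_0$ for a positive-measure set of $p$); measurability of $\chi_0(\cdot,\tilde t(\cdot,s))$ therefore does not transfer automatically. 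Repairing this requires extra input: either the a.e.\ continuity of fibres, recovering $\chi(p,\tilde t(p,s))$ as a limit of averages $h^{-1}\int\chi_0(p,\tau)\,\de\tau$ over intervals with measurable endpoints (an argument you do not give), or the paper's estimate showing that preimages of null sets under $\Id_\reSpace\times\tilde t$ are null — and the latter is exactly where the finite cost enters, via $1\le\reMeasure(\reSpace)^{1-\alpha}s_\alpha^\chi(\chi(p,t))$ and absolute continuity of the finite-cost integral. So the finiteness hypothesis is not cosmetic in the paper's argument, and your one-line dismissal of it, together with the unproved parenthetical, leaves a real gap precisely in the regime the paper is concerned with.
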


The proof is similar to the one of \cite[Lem.\,6.2]{Bernot-Caselles-Morel-Traffic-Plans} or \cite[Lem.\,4.1, Lem.\,4.2]{BeCaMo09}. We provide it here for completeness.

\begin{proof}
The fact that $\tilde\chi(p,\cdot)$ is arc-length parameterised for all $p \in \reSpace$ follows from Lemma\,\ref{lem:arc-length_reparameterisation_for_AC}.

Since $\tilde\chi = \chi \circ (\Id_\Gamma \times \tilde t)$, its measurability properties are a consequence of the measurability of $\chi$ and $(\Id_\Gamma \times \tilde t)$ and of the fact that for every null set $N \subset \Gamma \times I$ the set $(\Id_\Gamma \times \tilde t)^{-1}(N)$ is a null set in $\Gamma \times [0,\infty)$.

The measurability of $\Id_\reSpace \times \tilde t$ is proved as in \cite{Bernot-Caselles-Morel-Traffic-Plans} and follows from the measurability of the map $\tilde t$.
We now show that the set $\tilde t^{-1}([0,\lambda])$ is measurable for any $\lambda \in \R$.
Let $\{t_k\}_k$ be a dense sequence in $[0,\infty)$.
Since $\tilde t$ is nondecreasing and lower semicontinuous in the variable $s$, we have
\begin{displaymath}
 \tilde t^{-1}([0,\lambda]) = \bigcap_{h = 1}^\infty \bigcup_{k = 1}^\infty \{p \in \reSpace \ : \ \tilde t(p,t_k) \leq \lambda\} \times \left[0,t_k+\tfrac{1}{h}\right].
\end{displaymath}
Since $\{p \in \reSpace \ : \ \tilde t(p,t_k) \leq \lambda\} = \{p \in \reSpace \ : \ \tilde s(p,\lambda) \geq t_k\}$ is measurable, we obtain that $\tilde t^{-1}([0,\lambda])$ is measurable, too.

Finally, let $N \subset \reSpace \times I$ be a null set, and let $B$ be a Borel set such that $N \subset B$ and $(\reMeasure \otimes \lebesgue^1)(B) \leq \delta$. For almost all $p \in \reSpace$ we have
\begin{displaymath}
 \int_0^\infty \setchar{B}(p,\tilde t(p,s))\,\de s = \int_0^1 \setchar{B}(p,t)\tfrac{\partial\tilde s}{\partial t}(p,t)\,\de t = \int_0^1 \setchar{B}(p,t)|\dot\chi(p,t)|\,\de t\,.
\end{displaymath}
Integrating over $\reSpace$, we obtain
\begin{displaymath}
 (\reMeasure\otimes\lebesgue^1)((\Id_\reSpace \times \tilde t)^{-1}(B)) = \int_\reSpace \int_0^1 \setchar{B}(p,t)|\dot\chi(p,t)|\,\de t\,\de\reMeasure(p)\,.
\end{displaymath}
Due to $\MMSEn^\alpha(\chi) < \infty$, for every $\varepsilon > 0$ there exists $\delta > 0$ such that for every set $B$ with $(\reMeasure\otimes\lebesgue^1)(B) < \delta$ we have
\begin{displaymath}
 \int_B s_\alpha^\chi(\chi(p,t))|\dot\chi(p,t)|\,\de t\,\de\reMeasure(p) < \varepsilon\,.
\end{displaymath}
Since we have that $\setchar{B}(\chi(p,t)) \leq 1 \leq \reMeasure(\reSpace)^{1-\alpha}s_\alpha^\chi(\chi(p,t))$, it follows that
\begin{multline*}
 (\reMeasure\otimes\lebesgue^1)((\Id_\reSpace \times \tilde t)^{-1}(B)) = \int_\reSpace \int_0^1 \setchar{B}(p,t)|\dot\chi(p,t)|\,\de t\,\de\reMeasure(p) \\
 \leq \reMeasure(\reSpace)^{1-\alpha}\int_B s_\alpha^\chi(\chi(p,t))|\dot\chi(p,t)|\,\de t\,\de\reMeasure(p) < \reMeasure(\reSpace)^{1-\alpha}\varepsilon.
\end{multline*}
Choosing $\varepsilon$ arbitrarily small gives $(\reMeasure\otimes\lebesgue^1)((\Id_\reSpace \times \tilde t)^{-1}(N)) = 0$ as desired.
\end{proof}

We may further reparameterise the irrigation pattern.

\begin{proposition}[Constant speed reparameterisation of patterns]\label{prop:fixed_interval_reparameterisation_for_arc-length_parameterised_patterns}
Let $\chi : \reSpace \times I \to \R^n$ be an irrigation pattern with finite cost $\MMSEn^\alpha(\chi)$, let $l(p)=\int_0^1 |\dot\chi(p,t)|\,\de t$ be its fibre length, and let $\tilde\chi$ be as in Proposition\,\ref{prop:arc-length_reparameterisation_for_patterns}.
Then $\hat\chi:\reSpace\times I\to\R^n$, $(p,s) \mapsto \tilde\chi(p,s/l(p))$, is an irrigation pattern which reparameterises the fibres of $\chi$ with $\hat\chi_p\in\Lip(I)$ and constant velocity $|\dot{\hat\chi}_p|$ for almost all $p\in\reSpace$.
\end{proposition}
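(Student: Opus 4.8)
The plan is to verify, in order, the four claims contained in the statement: that $\hat\chi$ is measurable and hence a genuine irrigation pattern in the sense of Definition~\ref{def:irrigation_pattern}, that $\hat\chi_p\in\Lip(I)$ for almost every $p$, that $|\dot{\hat\chi}_p|$ is almost everywhere constant, and that $\hat\chi_p$ traces out the same fibre as $\chi_p$ with the same endpoints. The whole argument rests on two ingredients already at hand: Lemma~\ref{lem:arc-length_reparameterisation_for_AC}, applied fibrewise, and Proposition~\ref{prop:arc-length_reparameterisation_for_patterns}, which supplies both the arc-length reparameterisation $\tilde\chi$ and its measurability; by construction $\hat\chi_p$ is obtained from the curve $\tilde\chi_p$, whose parameter ranges over $[0,l(p)]$, by an affine rescaling carrying $I$ onto $[0,l(p)]$.

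First I would settle the measurability and a.e.\ finiteness of $l$. Joint measurability of $(p,t)\mapsto|\dot\chi(p,t)|$ follows from that of $\chi$ together with absolute continuity of the fibres (difference quotients converge a.e.), so $p\mapsto l(p)=\int_0^1|\dot\chi(p,t)|\,\de t$ is measurable by Tonelli. Since $m_\chi\le\reMeasure(\reSpace)$ and $\alpha-1\le0$, one has the pointwise bound $\reMeasure(\reSpace)^{\alpha-1}|\dot\chi(p,t)|\le s_\alpha^\chi(\chi(p,t))\,|\dot\chi(p,t)|$, whence $\reMeasure(\reSpace)^{\alpha-1}\int_\reSpace l\,\de\reMeasure\le\MMSEn^\alpha(\chi)<\infty$, so $l(p)<\infty$ for almost every $p$. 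On the set $\{l=0\}$, and on the null set $\{l=\infty\}$, almost every fibre $\chi_p$ is constant; there I would simply set $\hat\chi(p,\cdot)\equiv\chi(p,0)$, which is Lipschitz, of zero (hence constant) velocity, and has the same one-point image as $\chi_p$. This piece is measurable, so it remains to treat $\{0<l<\infty\}$.

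On $\{0<l<\infty\}$ write $\hat\chi=\tilde\chi\circ\Phi$ with $\Phi(p,s)=(p,sl(p))$; then $\Phi$ is measurable because $l$ is, and $\Phi$ pulls back $(\reMeasure\otimes\lebesgue^1)$-null sets to null sets, since by Fubini a null set has a.e.\ null $p$-sections and these are merely dilated by the finite factor $l(p)$ under $\Phi$. Composing with the measurable, a.e.-defined map $\tilde\chi$ from Proposition~\ref{prop:arc-length_reparameterisation_for_patterns} then shows $\hat\chi$ is measurable. Since $\tilde\chi_p$ is $1$-Lipschitz on $[0,l(p)]$ by Lemma~\ref{lem:arc-length_reparameterisation_for_AC}, the rescaled curve $\hat\chi_p$ is $l(p)$-Lipschitz on $I$, hence in $\Lip(I)\subset\AC(I)$; and the chain rule for the composition of a Lipschitz curve with an affine reparameterisation gives $|\dot{\hat\chi}_p(s)|=l(p)\,|\dot{\tilde\chi}_p(sl(p))|=l(p)$ for a.e.\ $s\in I$, using $|\dot{\tilde\chi}_p|\equiv1$ a.e. So the velocity is a.e.\ constant, equal to the fibre length.

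It remains to see that $\hat\chi_p$ reparameterises the fibre of $\chi_p$, that is $\hat\chi_p(I)=\tilde\chi_p([0,l(p)])=\chi_p(I)$ for almost every $p$. The inclusion ``$\subseteq$'' is immediate from $\tilde\chi_p=\chi_p\circ\tilde t(p,\cdot)$. For ``$\supseteq$'', fix $t\in I$ and set $u=\tilde s(p,t)\in[0,l(p)]$; then $\tilde t(p,u)\le t$ and, $\tilde s(p,\cdot)$ being continuous and nondecreasing, $\tilde s(p,\cdot)$ is constant on $[\tilde t(p,u),t]$, so $|\dot\chi_p|=0$ a.e.\ there and the absolutely continuous $\chi_p$ is constant on that interval, giving $\chi_p(t)=\chi_p(\tilde t(p,u))=\tilde\chi_p(u)\in\hat\chi_p(I)$. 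The same observation at $t\in\{0,1\}$ yields $\hat\chi_p(0)=\chi_p(0)$ and $\hat\chi_p(1)=\chi_p(1)$, so in particular $\mu_+^{\hat\chi}=\mu_+^\chi$ and $\mu_-^{\hat\chi}=\mu_-^\chi$ by Definition~\ref{def:irrigation}. I expect the only real friction to be bookkeeping: handling the degenerate fibre lengths $l(p)\in\{0,\infty\}$ and the null sets in the measurability step; the sole genuinely new analytic point beyond Proposition~\ref{prop:arc-length_reparameterisation_for_patterns} is this last use of absolute continuity to identify $\hat\chi_p(I)$ with $\chi_p(I)$ despite subintervals on which $\chi_p$ is at rest.
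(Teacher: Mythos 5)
Your argument is correct and follows the route the paper intends: the paper's own proof is the single sentence ``This follows from the properties of $\tilde\chi$,'' and your write-up simply fills in the details (measurability and a.e.\ finiteness of $l$ via the same estimate $1\le\reMeasure(\reSpace)^{1-\alpha}s_\alpha^\chi$ used in Proposition~\ref{prop:arc-length_reparameterisation_for_patterns}, the degenerate fibres, and the identification of images). Note that you have silently, and correctly, read the rescaling as $s\mapsto\tilde\chi(p,s\,l(p))$ rather than the $\tilde\chi(p,s/l(p))$ printed in the statement, which is the intended map carrying $I$ onto the arc-length parameter domain $[0,l(p)]$.
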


\begin{proof}
This follows from the properties of $\tilde\chi$.
\end{proof}

\begin{proposition}[Reparameterised patterns have the same cost]\label{prop:reparameterised_patterns_have_the_same_cost}
Let $\chi : \reSpace \times I \to \R^n$ be an irrigation pattern with finite cost $\MMSEn^\alpha(\chi)$ and let $\hat\chi$ be its Lipschitz reparameterisation. Then $\MMSEn^\alpha(\hat\chi)=\MMSEn^\alpha(\chi)$.
\end{proposition}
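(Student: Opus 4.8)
The plan is to reduce the claimed identity to a per-fibre change of variables and then integrate over the reference space. First I would note that, by construction (Lemma\,\ref{lem:arc-length_reparameterisation_for_AC} and Proposition\,\ref{prop:fixed_interval_reparameterisation_for_arc-length_parameterised_patterns}), $\hat\chi$ arises from $\chi$ by reparameterising each fibre $\chi_p$ with a nondecreasing, surjective, absolutely continuous map $I\to I$ (arc-length followed by an affine rescaling); in particular $\hat\chi_p(I)=\chi_p(I)$ for almost every $p$. Since $\MMSEn^\alpha(\chi)<\infty$ and $s_\alpha^\chi\ge\reMeasure(\reSpace)^{\alpha-1}>0$ everywhere, the fibre lengths $l(p)$ are finite for a.e.\ $p$, so this reparameterisation is well defined. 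Because the fibres (as sets) are unchanged outside a $\reMeasure$-null set of exceptional $p$, the solidarity classes satisfy $[x]_{\hat\chi}=[x]_\chi$ up to $\reMeasure$-null sets, hence $m_{\hat\chi}\equiv m_\chi$ on $\R^n$ and the two cost densities coincide; call this common function $g=s_\alpha^{\hat\chi}=s_\alpha^\chi$ on $\R^n$. It then suffices to prove, for almost every $p$,
\begin{equation}\label{eq:perfibre}
\int_0^1 g(\chi_p(t))\,|\dot\chi_p(t)|\,\de t=\int_0^1 g(\hat\chi_p(t))\,|\dot{\hat\chi}_p(t)|\,\de t\,,
\end{equation}
and to integrate \eqref{eq:perfibre} over $\reSpace$ against $\reMeasure$; the exchange of integrations is legitimate by Tonelli's theorem, the integrands being non-negative, and the measurability in $(p,t)$ of the reparameterised integrand is obtained exactly as in Proposition\,\ref{prop:arc-length_reparameterisation_for_patterns} (the affine rescaling of the arc-length variable being harmless).

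For the per-fibre identity \eqref{eq:perfibre} I would show that both sides equal the arc-length integral $\int_0^{l(p)} g(\tilde\chi_p(\sigma))\,\de\sigma$ along the arc-length parameterised curve $\tilde\chi_p$ of Lemma\,\ref{lem:arc-length_reparameterisation_for_AC}. For the left-hand side this is the change of variables $\sigma=\tilde s(p,t)$, $\de\sigma=|\dot\chi_p(t)|\,\de t$, valid for the nondecreasing absolutely continuous function $t\mapsto\tilde s(p,t)=\int_0^t|\dot\chi_p(\tau)|\,\de\tau$; the cleanest formulation is that the push-forward of $|\dot\chi_p(t)|\,\lebesgue^1\restr I$ under $\tilde s(p,\cdot)$ equals $\lebesgue^1\restr[0,l(p)]$ (since $\tfrac{\de}{\de t}\tilde s(p,t)=|\dot\chi_p(t)|$ a.e.), combined with $\chi_p=\tilde\chi_p\circ\tilde s(p,\cdot)$. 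For the right-hand side the same computation applies, but now $\hat\chi_p$ has constant speed $l(p)$ by Proposition\,\ref{prop:fixed_interval_reparameterisation_for_arc-length_parameterised_patterns}, so the corresponding change of variables is the elementary affine one; here one only needs that the arc-length parameterisation of $\hat\chi_p$ agrees with $\tilde\chi_p$, which holds because $\hat\chi_p$ and $\chi_p$ have the same image traversed in the same monotone order.

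The point requiring care — rather than a real obstacle — is that a fibre may contain \emph{rest intervals} on which $\dot\chi_p=0$ and $\tilde s(p,\cdot)$ is constant, so $\tilde s(p,\cdot)$ need not be injective; this is precisely why the change of variables should be phrased as an identity of push-forward measures instead of a naive substitution. On such intervals the Lagrangian integrand vanishes, so they contribute nothing to either side and the argument goes through. The remaining work is the measurability bookkeeping needed to integrate \eqref{eq:perfibre} over $\reSpace$, which is already supplied by Proposition\,\ref{prop:arc-length_reparameterisation_for_patterns}; modulo this, the proposition follows directly from Lemma\,\ref{lem:arc-length_reparameterisation_for_AC}.
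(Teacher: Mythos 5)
Your proposal is correct and follows essentially the same route as the paper, whose one-line proof rests on exactly the observation you make first: the solidarity classes, and hence $m_\chi$ and the cost density, depend only on the fibre images and not on their parameterisation. The remaining per-fibre change of variables (phrased as a push-forward identity to handle rest intervals) and the measurability bookkeeping are the details the paper labels ``straightforward,'' and you have supplied them correctly.
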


\begin{proof}
The proof is straightforward, once one notices that the solidarity classes \eqref{eq:solidarity_classes} do not depend on the parameterisation.
\end{proof}

\subsection{Equivalence between the formulations}%\label{sec:brTptEqFormulations}

It has been proved by Bernot, Caselles, and Morel in \cite[Sec.\,6]{Bernot-Caselles-Morel-Structure-Branched} that the pattern-based formulation is equivalent to the formulation by Xia, even though Xia's formulation does not include the particle motion, while in the pattern-based formulation by Maddalena, Morel, and Solimini the speed of particles occurs in the functional. In particular, minimisers exist for both models, and they can be identified with each other.

\begin{definition}[Branched transport energies]\label{def:branched_transport_energy}
Given two measures $\mu_+,\mu_-\in\fbm(\R^n)$ of equal mass,
for an irrigation pattern $\chi$, a mass flux $\flux$, and a rectifiable set $\Sigma\subset\R^n$ we define
\begin{equation*}
\brTptEn^{\alpha}[\chi]=\MMSEn^{\alpha}(\chi)\,,\quad
\brTptEn^{\alpha}[\flux]=\XiaEn^{\alpha}(\flux)\,,
\end{equation*}
where $\XiaEn^{\alpha}(\flux)$ and $\MMSEn^{\alpha}(\chi)$ are given by \eqref{eq:functional_XiaEn} and \eqref{eq:functional_MMSEn}, respectively, as well as
\begin{align*}
\brTptEn^{\alpha,\mu_+,\mu_-}[\chi]
&=\begin{cases}
\brTptEn^{\alpha}[\chi]&\text{if $\mu_+^\chi = \mu_+$ and $\mu_-^\chi = \mu_-$},\\
\infty&\text{else,}
\end{cases}\\
\brTptEn^{\alpha,\mu_+,\mu_-}[\flux]
&=\begin{cases}
\brTptEn^{\alpha}[\flux]&\text{if }\dv\flux=\mu_+-\mu_-,\\
\infty&\text{else,}
\end{cases}\\
\brTptEn^{\alpha,\mu_+,\mu_-}[\Sigma]
&=\inf \{\brTptEn^{\alpha,\mu_+,\mu_-}[\flux] \ : \ \flux=\vel\hdone\restr\Sigma, \ \vel:\Sigma\to\R^n\setminus\{0\}\}\,.
\end{align*}
The last functional corresponds to the new formulation of Section\,\ref{sec:branchedTransportBrief}.
Note that, if $\Sigma$ is not rectifiable, then $\brTptEn^{\alpha,\mu_+,\mu_-}[\Sigma] = \infty$ (see \cite[Proposition\,4.4]{Xia-Interior-Regularity}).
\end{definition}

\begin{theorem}[Equivalence of branched transport energies]\label{thm:equivalenceBrTpt}
The minimisation problems associated with Definition \ref{def:branched_transport_energy} are equivalent in the sense that
\begin{displaymath}
\min_\chi\brTptEn^{\alpha,\mu_+,\mu_-}[\chi]=\min_\flux\brTptEn^{\alpha,\mu_+,\mu_-}[\flux]=\min_\Sigma\brTptEn^{\alpha,\mu_+,\mu_-}[\Sigma]\,.
\end{displaymath}
The optima can be identified with each other via
\begin{equation*}
\Sigma=\{x\in\R^n\,:\,m_\chi(x)>0\}\,,\quad
\flux=\vel\hdone\restr\Sigma\text{ for the density }\vel=m_\chi\hat e\,,
\end{equation*}
where $\hat e$ is the tangent unit vector to $\Sigma$. Moreover,
\begin{displaymath}
 \int_{\R^n}\varphi\cdot\de\flux=\int_\reSpace\int_I\varphi(\chi_p(t))\cdot\dot\chi_p(t)\,\de t\,\de \reMeasure(p)\text{ for all }\varphi\in\cont_c(\R^n;\R^n)\,.
\end{displaymath}
\end{theorem}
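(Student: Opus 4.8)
The plan is to take the equivalence of the pattern- and flux-based formulations as an input from the literature and to graft the network formulation onto it by means of the rectifiability/representation theorem for fluxes of finite cost recalled in Remark\,\ref{rem:flux_equivalent}.

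The first equality $\min_\chi\brTptEn^{\alpha,\mu_+,\mu_-}[\chi]=\min_\flux\brTptEn^{\alpha,\mu_+,\mu_-}[\flux]$, the existence of both minimisers, and their mutual identification are exactly the result of Bernot, Caselles, and Morel in \cite[Sec.\,6]{Bernot-Caselles-Morel-Structure-Branched}: an optimal pattern $\chi$ and an optimal flux $\flux$ can be chosen so that $\flux$ is the canonical flux of $\chi$, i.e.\ the last displayed identity in the statement holds, and --- combining the rectifiability of $\flux$ from Remark\,\ref{rem:flux_equivalent} with the identification of the flux multiplicity at a point with the solidarity-class mass $m_\chi$ there (and of the flux orientation with the common approximate tangent of the fibres through that point) --- one has $\flux=m_\chi\hat e\,\hdone\restr\Sigma$ on the rectifiable set $\Sigma=\{x\in\R^n:m_\chi(x)>0\}$. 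This already yields the explicit identification of optima claimed in the theorem.

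It remains to prove $\min_\flux\brTptEn^{\alpha,\mu_+,\mu_-}[\flux]=\min_\Sigma\brTptEn^{\alpha,\mu_+,\mu_-}[\Sigma]$ by two inequalities. For $\min_\Sigma\le\min_\flux$, take an optimal flux $\flux$; since it has finite cost, Remark\,\ref{rem:flux_equivalent} gives $\flux=\tilde\vel\,\hat e\,(\hdone\restr\Sigma)$ with a rectifiable carrier $\Sigma$ and $\tilde\vel:\Sigma\to(0,\infty)$, so $\vel:=\tilde\vel\,\hat e:\Sigma\to\R^n\setminus\{0\}$ is admissible in the infimum defining $\brTptEn^{\alpha,\mu_+,\mu_-}[\Sigma]$, satisfies $\dv(\vel\,\hdone\restr\Sigma)=\mu_+-\mu_-$, and carries cost $\XiaEn^\alpha(\flux)$; hence $\min_\Sigma\le\brTptEn^{\alpha,\mu_+,\mu_-}[\Sigma]\le\min_\flux$. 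For $\min_\Sigma\ge\min_\flux$, fix any $\Sigma$ and any $\vel:\Sigma\to\R^n\setminus\{0\}$ with $\flux:=\vel\,\hdone\restr\Sigma$ and $\dv\flux=\mu_+-\mu_-$; if $\int_\Sigma|\vel|^\alpha\,\de\hdone=\infty$ there is nothing to check, and otherwise $\flux$ is a mass flux between $\mu_+$ and $\mu_-$ with, again by Remark\,\ref{rem:flux_equivalent}, $\brTptEn^{\alpha,\mu_+,\mu_-}[\flux]=\XiaEn^\alpha(\flux)=\int_\Sigma|\vel|^\alpha\,\de\hdone\ge\min_\flux$; taking the infimum over $\vel$ and then over $\Sigma$ gives $\min_\Sigma\ge\min_\flux$. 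Chaining the equalities proves the theorem.

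The bookkeeping above is routine; the mathematical substance sits in the two facts borrowed from the branched-transport literature that make the construction consistent, and these are where the main difficulty would lie were one to prove them from scratch: (i) that a flux of the form $\vel\,\hdone\restr\Sigma$ with finite integral cost really is a mass flux in the relaxed, approximation-based sense and that its relaxed cost $\XiaEn^\alpha$ equals $\int_\Sigma|\vel|^\alpha\,\de\hdone$, with no gap introduced by the lower-semicontinuous envelope (this is the representation theorem of Xia and of Bernot--Caselles--Morel underlying Remark\,\ref{rem:flux_equivalent}); and (ii) that the canonical flux of an optimal pattern has multiplicity exactly $m_\chi$ and a well-defined orientation on $\Sigma$, which relies on the rectifiability of finite-cost patterns and on the fact that $\reMeasure$-almost every fibre through a point of $\Sigma$ has the same approximate tangent there. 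Finally, one should record that under the standing compact-support hypothesis all three minima are attained, by the cited existence results of Xia \cite{Xia-Optimal-Paths} and of Maddalena and Solimini \cite{Maddalena-Solimini-Transport-Distances}.
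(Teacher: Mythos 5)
Your proposal is correct and follows essentially the same route as the paper: the pattern--flux equivalence and the identification $\flux=m_\chi\hat e\,\hdone\restr\Sigma$ are delegated to Bernot--Caselles--Morel, and the flux--network equivalence is obtained from the representation of finite-cost fluxes in Remark\,\ref{rem:flux_equivalent}. Your version merely spells out the $\min_\flux=\min_\Sigma$ step as two explicit inequalities, which the paper compresses into one sentence.
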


\begin{proof}
The equivalence of the pattern-based formulation $\min_\chi\brTptEn^{\alpha,\mu_+,\mu_-}[\chi]$ to Xia's formulation $\min_\flux\brTptEn^{\alpha,\mu_+,\mu_-}[\flux]$
has been proved by Bernot, Caselles, and Morel (\cite[Sec.\,6]{Bernot-Caselles-Morel-Structure-Branched} or \cite[Chap.\,9]{BeCaMo09}).
Furthermore, for an optimal $\chi$, the set
\begin{displaymath}
 \Sigma = \{x\in\R^n\,:\,m_\chi(x)>0\}\subset\bigcup_{p\in\reSpace}\chi_p(I)
\end{displaymath}
is rectifiable \cite[Lem.\,6.3]{Bernot-Caselles-Morel-Traffic-Plans}, and thus for $\hdone$-a.\,e.\ point $x$ has a tangent unit vector $\hat e(x)$.
Defining a multiplicity via $\tilde\vel=m_\chi$ (see Definition\,\ref{def:solidarity_classes}) we obtain a flux $\flux=\tilde\vel\hat e\hdone\restr\Sigma$ as in Remark\,\ref{rem:flux_equivalent},
and the proof of \cite[Prop.\,9.8]{BeCaMo09} implies that this flux is optimal.

The equality $\min_\flux\brTptEn^{\alpha,\mu_+,\mu_-}[\flux]=\min_\Sigma\brTptEn^{\alpha,\mu_+,\mu_-}[\Sigma]$ follows
by choosing $\Sigma$ as the rectifiable set from Remark\,\ref{rem:flux_equivalent} corresponding to the optimal $\flux$.

Finally, using the relation between the optimal $\flux$ and $\chi$, for $\varphi\in\cont_c(\R^n;\R^n)$ we have
\begin{multline*}
 \int_{\R^n}\varphi\cdot\de\flux
 =\int_{\bigcup_{p\in\reSpace}\chi_p(I)}\varphi(x)\cdot\vel(x)\,\de\hdone(x)\\
 = \int_{\R^n} [x]_\chi\varphi(x)\cdot\hat e(x)\,\de\hdone(x)
 = \int_\reSpace\int_I\varphi(\chi_p(t))\cdot\dot\chi_p(t)\,\de t\,\de \reMeasure(p).
\end{multline*}
This formula follows noting that if two fibres $\chi_p$ and $\chi_q$ coincide in an interval, then their tangents coincide $\hdone$-a.\,e., too.
\end{proof}

\subsection{Regularity properties}

Due to proof of equivalence one can examine regularity properties of minimisers in the most convenient formulation. The following is based on patterns.

\begin{definition}[Loop-free paths and patterns]
Let $\theta\in\Lip(I)$ and let $\chi$ be an irrigation pattern. Following \cite[Def.\,4.5]{Maddalena-Solimini-Synchronic}, we say that \emph{$\theta$ has a loop} if there exist $t_1<t_2<t_3$ such that
\begin{equation*}
\theta(t_1) = \theta(t_3) = x\,,\quad\theta(t_2) \neq x;
\end{equation*}
else we say that $\theta$ is \emph{loop-free}.
$\chi$ is said to be \emph{loop-free} if $\chi_p$ is loop-free for almost all $p\in\reSpace$.
\end{definition}

\begin{definition}[Single path property]\label{def:sigle_path_property}
Let $\chi$ be a loop-free irrigation pattern and let
\begin{displaymath}
 \reSpace_{\overrightarrow{xy}}^\chi = \{p \in \reSpace \ : \ \chi_p^{-1}(x) < \chi_p^{-1}(y)\}\,.
\end{displaymath}
Following \cite[Def.\,3.3]{Bernot-Caselles-Morel-Structure-Branched} and \cite[Def.\,7.3]{BeCaMo09}, $\chi$ has the \emph{single path property} if for every $x,y$ with $\reMeasure(\reSpace_{\overrightarrow{xy}}^\chi) > 0$, the sets $\chi(p,[\chi_p^{-1}(x),\chi_p^{-1}(y)])$ coincide for almost all $p \in \reSpace_{\overrightarrow{xy}}^\chi$.

Note that under the single path property, almost all trajectories from $x$ to $y$ coincide, but they need not coincide as functions of time (since the time variable can be reparameterised).
\end{definition}

\begin{remark}
Optimal patterns are loop-free and enjoy the single path property (see \cite[Sec.\,3]{Bernot-Caselles-Morel-Structure-Branched}, \cite[Chap.\,4]{BeCaMo09} or \cite[Thm.\,4.1]{Maddalena-Solimini-Synchronic}).
\end{remark}

\section{Urban planning formulations}\label{sec:urban_planning}

Here we will employ the same notions as in the previous section to provide the Eulerian or flux-based and the Lagrangian or pattern-based formulations of urban transport. These will then be proved equivalent to the original definition, e.\,g.\ from \cite{BuPrSoSt09}.

\subsection{Flux-based formulation}

Let $G=(V(G),E(G),w)$ be a discrete mass flux between discrete measures $\mu_+,\mu_-\in\fbm(\R^n)$. Let $\Sigma$ be a subgraph of $G$; $\Sigma$ is not required to be connected. Given parameters $\varepsilon > 0$, $a>1$, the \emph{cost function} $\urbPlXia^{\varepsilon,a}$ is defined as
\begin{displaymath}
\urbPlXia^{\varepsilon,a}(G,\Sigma) = \sum_{e \in E(G)\setminus E(\Sigma)} a w(e)l(e) + \sum_{e \in E(\Sigma)} (w(e)+\varepsilon)l(e)\,,
\end{displaymath}
where $l(e)$ is the length of edge $e$. $\urbPlXia^{\varepsilon,a}(G,\Sigma)$ is the cost for employees to travel from an initial distribution $\mu_+$ of homes to a distribution $\mu_-$ of workplaces via the network $G$ using public transport on $\Sigma$. We wish to minimise $\urbPlXia^{\varepsilon,a}(G,\Sigma)$ among admissible pairs $(G,\Sigma)$. For a pair to be optimal one must have
\begin{itemize}
 \item $a w(e) \leq w(e)+\varepsilon$ if $e \in E(G) \setminus E(\Sigma)$, since otherwise the pair $(G,\Sigma\cup\{e\})$ has a lower cost, and
 \item $a w(e) \geq w(e)+\varepsilon$ if $e \in E(\Sigma)$, since else $(G,\Sigma\setminus\{e\})$ has a lower cost.
\end{itemize}
As a result, the cost of an edge $e\in E(G)$ for an optimal $(G,\Sigma)$ is given by $\min(a w(e), w(e)+\varepsilon)$ so that the problem can be the restated with just the mass flux variable $G$.

\begin{definition}[Cost function, flux formulation]
Let $G=(V(G),E(G),w)$ be a discrete mass flux between discrete measures $\mu_+,\mu_-\in\fbm(\R^n)$. Given parameters $\varepsilon > 0$, $a>1$,
we define the transport cost per transport length as $c^{\varepsilon,a}(w) = \min(aw,w+\varepsilon)$ (cf.\ Figure\,\ref{fig:transportCost}).
The \emph{cost function} $\urbPlXia^{\varepsilon,a}$ associated with a mass flux $G$ is defined as
\begin{displaymath}
\urbPlXia^{\varepsilon,a}(G) = \sum_{e \in E(G)}c^{\varepsilon,a}(w(e))\,l(e) = \sum_{e \in E(G)}\min(aw(e),w(e)+\varepsilon) l(e)\,,
\end{displaymath}
where $l(e)$ is the length of edge $e$.

If $\flux\in\rca(\R^n)$ is a general mass flux between general measures $\mu_+,\mu_-\in\fbm(\R^n)$, the \emph{cost function} is defined as
\begin{displaymath}
\urbPlXia^{\varepsilon,a}(\flux) = \inf\{\liminf_{k\to\infty}\urbPlXia^{\varepsilon,a}(G_k) \ : \ (\mu_+^k,\mu_-^k,\flux_{G_k}) \weakstarto (\mu_+,\mu_-,\flux)\}.
\end{displaymath}
\end{definition}

\begin{problem}[Urban planning problem, flux formulation]
Given $\mu_+,\mu_- \in \fbm(\R^n)$, the \emph{urban planning problem} is
\begin{equation*}
 \min\{\urbPlXia^{\varepsilon,a}(\flux) \ : \ \flux \text{ mass flux between } \mu_+\text{ and }\mu_-\}\,.
\end{equation*}
\end{problem}

\begin{remark}[Existence of minimisers]
The existence of mass fluxes with finite cost follows from the existence of irrigation patterns with finite cost (Remark\,\ref{rem:existenceUrbPlFiniteCostPattern}) and Proposition\,\ref{prop:constructFluxFromPattern} later.
Furthermore, $\urbPlXia^{\varepsilon,a}$ is \mbox{weakly-$*$} lower semicontinuous by definition,
and it is bounded below by $\|\cdot\|_\rca$ (since it is the relaxation of a functional, defined only on discrete mass fluxes, which satisfies the same property).
Thus, graphs with uniformly bounded energy are \mbox{weakly-$*$} precompact, and existence of minimisers follows via the direct method of the calculus of variations.
\end{remark}

\begin{remark}
Note that, just like $c^\alpha$ for branched transport, the function $c^{\varepsilon,a}$ is subadditive (cf.\ Figure\,\ref{fig:transportCost}), since a concave function whose graph passes through the origin is subadditive.
This leads to an economy of scales and thus to branched structures.
However, unlike $c^\alpha$ it is not strictly subadditive, so there is a slightly weaker preference for branching structures.
In particular, the minimisers need not be finite graphs away from the support of the initial and final measure,
and mass fluxes can locally be absolutely continuous with respect to Lebesgue measure $\lebesgue^n$ (see Figure\,\ref{fig:urban_planning}).
\end{remark}

\begin{figure}
 \begin{center}
  \setlength{\unitlength}{.3\linewidth}
  \begin{picture}(1.0,0.85)
   \put(0.09,0.05){\includegraphics[width=\unitlength]{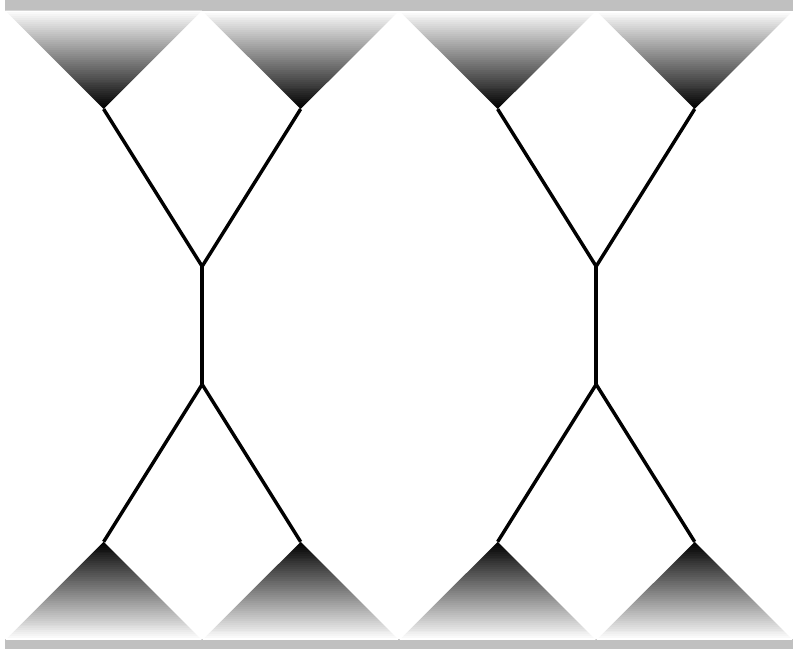}}
   \put(0.5,-0.02){$\mu_+$}
   \put(0.5,0.88){$\mu_-$}
    % \put(0.22,0.40){$\Sigma_1$}
    % \put(0.43,0.40){$\Sigma_2$}
    % \put(0.64,0.40){$\Sigma_3$}
    % \put(0.85,0.40){$\Sigma_4$}
  \end{picture}
  \caption{Sketch of an optimal urban planning mass flux which is absolutely continuous with respect to Lebesgue measure in some regions.
  The grey shade indicates the local flux density.}% (here $\Sigma = \cup_{i=1}^4 \Sigma_i$ is the union of four connected components).}
  \label{fig:urban_planning}
 \end{center}
\end{figure}

\begin{remark}
For finite graphs, the corresponding optimal network subgraph $\Sigma$ is the graph whose edges are
\begin{displaymath}
E(\Sigma) = \{e \in E(G) \ : \ a w(e) > w(e) + \varepsilon\}\,.
\end{displaymath}
\end{remark}

\subsection{Pattern-based formulation}

\begin{definition}[Cost function, pattern formulation]\label{def:urbPlPatternForm}
Let $(\reSpace,\Bcal(\reSpace),\reMeasure)$ be the reference space and let $\chi : \reSpace \times [0,1] \to \R^n$ be an irrigation pattern. For $\varepsilon > 0$ and $a>1$, consider the density
\begin{equation*}
\repsilonachi(x)= c^{\varepsilon,a}(m_\chi(x))/m_\chi(x) = \begin{cases}
                                                           \min\left(1+\tfrac\varepsilon{m_\chi(x)},a\right) & \text{ if } m_\chi(x) > 0,\\
                                                           a & \text{ if } m_\chi(x) = 0.
                                                          \end{cases}
\end{equation*}
The \emph{cost functional} $\urbPlMMS^{\varepsilon,a}$ is
\begin{equation*}
\urbPlMMS^{\varepsilon,a}(\chi)=\int_{\reSpace\times I}\repsilonachi(\chi_p(t))|\dot\chi_p(t)|\,\de \reMeasure(p)\,\de t\,.
\end{equation*}
\end{definition}

\begin{problem}[Urban planning problem, pattern formulation]
Given $\mu_+,\mu_- \in \fbm(\R^n)$, the \emph{urban planning problem} is
\begin{equation*}
 \min\{\urbPlMMS^{\varepsilon,a}(\chi) \ : \ \mu_+^\chi = \mu_+\text{ and }\mu_-^\chi = \mu_-\}\,.
\end{equation*}
\end{problem}

\begin{remark}[Existence of a finite cost pattern]\label{rem:existenceUrbPlFiniteCostPattern}
An irrigation pattern with finite cost $\urbPlMMS^{\varepsilon,a}$ for a given pair $\mu_+,\mu_-$ of finite Borel measures with same mass and bounded support
can readily be constructed based on the Monge--Kantorovich problem.
Indeed, it is well-known that the 1-Wasserstein distance between $\mu_+$ and $\mu_-$ is bounded (see e.\,g.\ \cite[Chap.\,1]{Villani-Topics-Optimal-Transport}),
\begin{equation*}
\Wd{1}(\mu_+,\mu_-) = \inf_{\mu\in\Pi(\mu_+,\mu_-)} \int_{\R^n\times\R^n} |x - y|\, \de\mu(x,y) < \infty\,,
\end{equation*}
and that the infimum is achieved by a minimising measure $\mu\in\Pi(\mu_+,\mu_-)$, where $\Pi(\mu_+,\mu_-)$ denotes the set of transport plans as in Section\,\ref{sec:introUrbPlan}.

By \cite[Prop.\,12 in Sec.\,5 of Chap.\,15]{Royden-Real-Analysis} there exist a measure $\nu$ on $[0,1]$
and an isomorphism $\varphi:([0,1],\Bcal([0,1]),\nu)\to(\R^n\times\R^n,\Bcal(\R^n\times\R^n),\mu)$ of measure spaces.
% Defining
% \begin{equation*}
% \psi:[0,1]\to[0,1]\,,\quad\psi(s)=\sup\{r\in[0,1]\,:\,\nu([0,r])\leq s\nu([0,1])\}\,,
% \end{equation*}
Define $\psi:[0,m]\to[0,1]$ as the pseudo-inverse of the cumulative function of $\nu$.
It is clear that the pushforward of the Lebesgue measure under $\varphi\circ\psi$ is $\mu$.
Now take the reference space $(\reSpace,\Bcal(\reSpace),\reMeasure)=([0,m],\Bcal([0,m]),\lebesgue^1\restr [0,m])$ and define the irrigation pattern
\begin{equation*}
\chi(p,t) = C_t(\varphi(\psi(p)))\qquad\text{with }C_t:\R^n\times\R^n\to\R^n\,,\;C_t(x,y)=ty+(1-t)x\,.
\end{equation*}
Since $C_0$ and $C_1$ are the projection on the first and second argument, it is a straightforward exercise to verify that $\mu_+^\chi = \mu_+$, $\mu_-^\chi = \mu_-$. Moreover, we have
\begin{multline*}
 \urbPlMMS^{\varepsilon,a}(\chi) = \int_{\reSpace \times I} \repsilonachi(\chi(p,t))|\dot\chi(p,t)|\,\de\reMeasure(p)\,\de t \leq \int_{\reSpace \times I} a |\dot\chi(p,t)|\,\de\reMeasure(p)\,\de t\\
 %= \int_\reSpace a|\chi(p,1)-\chi(p,0)|\,\de\reMeasure(p)
 = \int_\reSpace a|C_1(\varphi(\psi(p)))-C_0(\varphi(\psi(p)))|\,\de\reMeasure(p) 
 = \int_{\R^n \times \R^n} a|x-y|\,\de\mu(x,y)
 = a\Wdone(\mu_+,\mu_-)\,.
\end{multline*}
\end{remark}

\begin{remark}[Existence of minimisers]
The existence of patterns with minimal urban planning cost will follow from Remark\,\ref{rem:existenceOptPatternUrbPl} via the equivalence of different energy functionals, one of which admits a minimiser.
\end{remark}

Before considering the equivalence between the different formulations, let us state a few properties of the cost functional for later use.

\begin{proposition}[Constant speed reparameterisation of patterns]\label{thm:constSpeedPatternsUrbPl}
Irrigation patterns of finite cost can be reparameterised such that $\chi_p\in\Lip(I)$ and $|\dot\chi_p|$ is constant for almost all $p\in\reSpace$ without changing the cost $\urbPlMMS^{\varepsilon,a}$.
\end{proposition}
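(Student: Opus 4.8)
The plan is to recycle, essentially verbatim, the reparameterisation machinery already set up for branched transport in Propositions~\ref{prop:arc-length_reparameterisation_for_patterns} and~\ref{prop:fixed_interval_reparameterisation_for_arc-length_parameterised_patterns}, and then to repeat the argument of Proposition~\ref{prop:reparameterised_patterns_have_the_same_cost} with the branched transport cost density $s_\alpha^\chi$ replaced by the urban planning density $\repsilonachi$. Concretely, given an irrigation pattern $\chi$ with $\urbPlMMS^{\varepsilon,a}(\chi)<\infty$, I would first form the arc-length reparameterised pattern $\tilde\chi$ as in Proposition~\ref{prop:arc-length_reparameterisation_for_patterns} and then rescale each fibre to the unit interval as in Proposition~\ref{prop:fixed_interval_reparameterisation_for_arc-length_parameterised_patterns}, obtaining a pattern $\hat\chi$ with $\hat\chi_p\in\Lip(I)$ and $|\dot{\hat\chi}_p|$ constant for almost all $p\in\reSpace$ (note that $\urbPlMMS^{\varepsilon,a}(\chi)\ge\int_{\reSpace\times I}|\dot\chi_p(t)|\,\de\reMeasure(p)\,\de t$ since $\repsilonachi\ge 1$, so fibre lengths are finite $\reMeasure$-a.e.\ and the rescaling is well-defined).

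The one thing to re-check in that first step is that finiteness of $\urbPlMMS^{\varepsilon,a}(\chi)$, rather than of $\MMSEn^\alpha(\chi)$, already makes the construction go through. The only place where finiteness of the cost entered the proof of Proposition~\ref{prop:arc-length_reparameterisation_for_patterns} is the verification that $\Id_\reSpace\times\tilde t$ maps $\reMeasure\otimes\lebesgue^1$-null sets to null sets, which rested on the bound $\setchar{B}(p,t)\le 1\le\reMeasure(\reSpace)^{1-\alpha}s_\alpha^\chi(\chi(p,t))$. Since $\repsilonachi\ge 1$ pointwise, the corresponding bound $\setchar{B}(p,t)\le 1\le\repsilonachi(\chi(p,t))$ is trivial here, and absolute continuity of the integral applied to $\urbPlMMS^{\varepsilon,a}(\chi)<\infty$ shows that $(\reMeasure\otimes\lebesgue^1)\big((\Id_\reSpace\times\tilde t)^{-1}(B)\big)=\int_B|\dot\chi_p(t)|\,\de\reMeasure(p)\,\de t$ is small whenever $(\reMeasure\otimes\lebesgue^1)(B)$ is; everything else in that proof (measurability of $\tilde t$ and of $\tilde\chi$, arc-length parameterisation of the fibres) carries over unchanged. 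I would also record that $\hat\chi$ still irrigates $\mu_+$ onto $\mu_-$: by construction $\hat\chi(p,0)=\chi(p,0)$ and $\hat\chi(p,1)=\chi(p,1)$ for almost all $p$ — the infimum defining $\tilde t$ may select a parameter strictly before $1$ when a fibre is eventually constant, but the image endpoint is unchanged — so $\mu_\pm^{\hat\chi}=\mu_\pm^\chi$.

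The cost equality is the heart of the matter, and it works for the same structural reason as Proposition~\ref{prop:reparameterised_patterns_have_the_same_cost}: the density $\repsilonachi(x)$ depends on $x$ only through $m_\chi(x)=\reMeasure([x]_\chi)$, and the solidarity class $[x]_\chi=\{q\in\reSpace : x\in\chi_q(I)\}$ of~\eqref{eq:solidarity_classes} depends only on the fibre \emph{images}. Reparameterisation does not alter images, $\hat\chi_q(I)=\tilde\chi_q([0,l(q)])=\chi_q(I)$, hence $m_{\hat\chi}=m_\chi$ and the density $r_{\varepsilon,a}^{\hat\chi}$ coincides with $\repsilonachi$ as a function on $\R^n$. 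For almost every fixed $p$, the inner integral $\int_I r_{\varepsilon,a}^{\hat\chi}(\hat\chi_p(s))\,|\dot{\hat\chi}_p(s)|\,\de s$ is the line integral of the fixed nonnegative Borel function $\repsilonachi$ along the curve $\hat\chi_p$, which is a monotone (almost everywhere) reparameterisation of $\chi_p$; by the change of variables formula for such reparameterisations it equals $\int_I \repsilonachi(\chi_p(t))\,|\dot\chi_p(t)|\,\de t$, and integrating over $p\in\reSpace$ gives $\urbPlMMS^{\varepsilon,a}(\hat\chi)=\urbPlMMS^{\varepsilon,a}(\chi)$.

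I expect the only step requiring any genuine care to be the first one — measurability and null-set preservation of the reparameterisation map — and even that is, if anything, easier here than in the branched transport case, since the urban planning density is bounded below by $1$ rather than merely by $\reMeasure(\reSpace)^{\alpha-1}$. The remaining points (endpoint preservation, invariance of the line integral under monotone reparameterisation) are routine.
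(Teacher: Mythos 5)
Your proposal is correct and follows exactly the route the paper takes: the paper's proof consists of the single observation that the arguments of Propositions~\ref{prop:arc-length_reparameterisation_for_patterns}--\ref{prop:reparameterised_patterns_have_the_same_cost} carry over verbatim once the estimate $1 \leq \reMeasure(\reSpace)^{1-\alpha}s_\alpha^\chi(\chi(p,t))$ is replaced by $1 \leq \repsilonachi(\chi(p,t))$, which is precisely the substitution you identify and justify. Your additional checks (endpoint preservation and invariance of the line integral because $m_\chi$ depends only on fibre images) are the same reasons the cited propositions rely on, just spelled out more explicitly.
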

\begin{proof}
The proof is analogous to the proofs of Propositions\,\ref{prop:arc-length_reparameterisation_for_patterns} to \ref{prop:reparameterised_patterns_have_the_same_cost},
merely replacing the estimate $1 \leq \reMeasure(\reSpace)^{1-\alpha}s_\alpha^\chi(\chi(p,t))$ by $1 \leq \repsilonachi(\chi(p,t))$.
\end{proof}

The below closely follows \cite[Lemma 4.4, Lemma 4.5]{Maddalena-Solimini-Transport-Distances}.

\begin{definition}[Pointwise convergence]
Let $\chi_n$ be a sequence of irrigation patterns. We say that $\chi_n$ \emph{pointwise converges} to $\chi$ if for almost all $p \in \Gamma$ the curve $\chi_n(p,\cdot)$ converges uniformly to $\chi$.
\end{definition}

\begin{proposition}[$m_\chi$ is u.s.c.\ and $r_{\varepsilon,a}^{\chi}$ is l.s.c.]\label{prop:mass_is_upper_semicontinuous}
Let $\chi_n$ be a sequence of irrigation patterns such that $\chi_n \to \chi$ pointwise. Let $t_n \in I$ such that $t_n \to t$. Then, for almost all $p \in \Gamma$,
\begin{align}
 &m_\chi(\chi(p,t)) \geq \limsup_{n \to \infty} m_{\chi_n}(\chi_n(p,t_n))\,,\label{eq:mass_is_upper_semicontinuous}\\
 &\repsilonachi(\chi(p,t)) \leq \liminf_{n \to \infty} r_{\varepsilon,a}^{\chi_n}(\chi_n(p,t_n))\,.\label{eq:r_is_lower_semicontinuous}
\end{align}
\end{proposition}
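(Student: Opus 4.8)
The plan is to derive the lower semicontinuity \eqref{eq:r_is_lower_semicontinuous} formally from the upper semicontinuity \eqref{eq:mass_is_upper_semicontinuous}, so that the real work only concerns \eqref{eq:mass_is_upper_semicontinuous}. Introduce the function $g$ defined by $g(m)=\min(1+\varepsilon/m,a)$ for $m>0$ and $g(0)=a$; it is continuous and nonincreasing on $[0,\infty)$, and by Definition\,\ref{def:urbPlPatternForm} we have $\repsilonachi(x)=g(m_\chi(x))$ and likewise $r_{\varepsilon,a}^{\chi_n}(y)=g(m_{\chi_n}(y))$. For any bounded nonnegative sequence $(b_n)$ one has $g(\limsup_n b_n)\le\liminf_n g(b_n)$: for every $\delta>0$ eventually $b_n\le\limsup_n b_n+\delta$, hence $g(b_n)\ge g(\limsup_n b_n+\delta)$ by monotonicity, so $\liminf_n g(b_n)\ge g(\limsup_n b_n+\delta)$, and letting $\delta\downarrow0$ uses continuity of $g$. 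Since $b_n:=m_{\chi_n}(\chi_n(p,t_n))\le\reMeasure(\reSpace)<\infty$, combining this with \eqref{eq:mass_is_upper_semicontinuous} and the monotonicity of $g$ yields $\repsilonachi(\chi(p,t))=g(m_\chi(\chi(p,t)))\le g(\limsup_n b_n)\le\liminf_n g(b_n)=\liminf_n r_{\varepsilon,a}^{\chi_n}(\chi_n(p,t_n))$, which is \eqref{eq:r_is_lower_semicontinuous}.

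It remains to prove \eqref{eq:mass_is_upper_semicontinuous}. Let $N\subset\reSpace$ be the $\reMeasure$-null set of particles $q$ for which $\chi_n(q,\cdot)$ does not converge uniformly to $\chi(q,\cdot)$; note that $N$ does not depend on $p$. Fix any $p\notin N$ and abbreviate $x=\chi(p,t)$, $x_n=\chi_n(p,t_n)$. Then $x_n\to x$, because $|x_n-x|\le\sup_{\tau\in I}|\chi_n(p,\tau)-\chi(p,\tau)|+|\chi(p,t_n)-\chi(p,t)|$ and both terms vanish by the choice of $p$ and the continuity of $\chi(p,\cdot)$. Set $A_n=[x_n]_{\chi_n}=\{q\in\reSpace:x_n\in\chi_n(q,I)\}$; these sets are $\Bcal(\reSpace)$-measurable (a standard fact for patterns, implicit in the very definition of $m_{\chi_n}$), and hence so is $\limsup_n A_n=\bigcap_{k\ge1}\bigcup_{n\ge k}A_n$.

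The heart of the argument is the inclusion
\begin{displaymath}
\limsup_{n\to\infty}A_n\setminus N\subseteq[x]_\chi .
\end{displaymath}
To see this, let $q\in\limsup_n A_n$ with $q\notin N$. Then there is a subsequence $(n_k)$ and parameters $s_{n_k}\in I$ with $\chi_{n_k}(q,s_{n_k})=x_{n_k}$; by compactness of $I$ we may assume $s_{n_k}\to s\in I$. Since $q\notin N$, the curves $\chi_{n_k}(q,\cdot)$ converge uniformly to $\chi(q,\cdot)$, so $\chi_{n_k}(q,s_{n_k})\to\chi(q,s)$ (estimate $|\chi_{n_k}(q,s_{n_k})-\chi(q,s)|$ by a uniform-convergence term plus $|\chi(q,s_{n_k})-\chi(q,s)|$). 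But the left-hand side equals $x_{n_k}\to x$, hence $\chi(q,s)=x$, i.e.\ $q\in[x]_\chi$. Finally, since $\reMeasure$ is finite, $\reMeasure(\limsup_n A_n)\ge\limsup_n\reMeasure(A_n)$ (reverse Fatou for sets), and combining this with the inclusion above and $\reMeasure(N)=0$ gives $\limsup_n m_{\chi_n}(x_n)=\limsup_n\reMeasure(A_n)\le\reMeasure([x]_\chi)=m_\chi(x)$, as claimed.

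I expect the only genuinely delicate points — hence the main obstacles — to be: (i) checking that the exceptional null set $N$ can be chosen independently of $p$, which works precisely because $N$ is defined purely through the fibrewise convergence $\chi_n(q,\cdot)\to\chi(q,\cdot)$; and (ii) the measurability of the solidarity classes $A_n$ and of $\limsup_n A_n$, which is part of the standard measure-theoretic machinery underpinning the definition of $m_{\chi_n}$. Everything else is a routine compactness-and-continuity argument.
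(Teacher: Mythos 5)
Your proof is correct and follows essentially the same route as the paper's: both arguments reduce \eqref{eq:r_is_lower_semicontinuous} to \eqref{eq:mass_is_upper_semicontinuous} via the monotonicity and continuity of $m\mapsto c^{\varepsilon,a}(m)/m$, and both prove \eqref{eq:mass_is_upper_semicontinuous} by showing $\limsup_n[\chi_n(p,t_n)]_{\chi_n}\subseteq[\chi(p,t)]_\chi$ up to a null set and then applying reverse Fatou for the finite measure $\reMeasure$. The only cosmetic difference is that you establish the key inclusion directly by extracting a convergent subsequence of parameters $s_{n_k}$, whereas the paper argues by contraposition using $\dist(\chi(p,t),\chi(q,I))>0$; the content is identical.
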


\begin{proof}
Fix $p \in \reSpace$ such that $\chi_p \in \AC(I)$ (we only discard a null set of fibres) and define the sets $A$ and $A_n$ as
\begin{displaymath}
 A = \bigcap_n A_n\,, \quad A_n = \bigcup_{k \geq n} [\chi_k(p,t_k)]_{\chi_k}\,.
\end{displaymath}
Recall that $A = \limsup_n A_n$, $\reMeasure([\chi_k(p,t_k)]_{\chi_k}) = m_{\chi_k}(\chi_k(p,t_k))$ and
\begin{displaymath}
 \reMeasure(A) = \lim_n\reMeasure(A_n) \geq \limsup_n m_{\chi_n}(\chi_n(p,t_n))\,.
\end{displaymath}
We want to show that $\reMeasure(A \setminus [\chi(p,t)]_{\chi}) = 0$, that is, $A \subseteq [\chi(p,t)]_{\chi}$ up to a negligible set of fibres so that $m_\chi(\chi(p,t)) \geq \reMeasure(A) \geq \limsup_n m_{\chi_n}(\chi_n(p,t_n))$, proving inequality \eqref{eq:mass_is_upper_semicontinuous}.
Let then $q \in A$ such that $\chi_n(q,\cdot) \to \chi(q,\cdot)$ uniformly (we only discard a null set of fibres of $A$).
Recall that $q \in A$ if and only if there exists an increasing sequence of integers $n_k$ and a sequence $s_{n_k} \in I$ such that $\chi_{n_k}(q,s_{n_k}) = \chi_{n_k}(p,t_{n_k})$.
Suppose now by contraposition that $q \notin [\chi(p,t)]_{\chi}$. Because of the continuity of $\chi$ we have $d=\dist(\chi(p,t),\chi(q,I)) > 0$. Because of the fact that $\chi_n \to \chi$ uniformly, for large $k$ we have also $\dist(\chi(p,t),\chi_{n_k}(q,I)) > \frac d2$, contradicting $\chi_{n_k}(q,s_{n_k}) = \chi_{n_k}(p,t_{n_k})\to\chi(p,t)$.

Inequality \eqref{eq:r_is_lower_semicontinuous} follows immediately from inequality \eqref{eq:mass_is_upper_semicontinuous} and the definition of $\repsilonachi$.
\end{proof}

\begin{proposition}[Lower semicontinuity of $\urbPlMMS^{\varepsilon,a}$]\label{prop:urban_planning_energy_is_lower_semicontinuous}
The functional $\urbPlMMS^{\varepsilon,a}$ is lower semicontinuous with respect to pointwise convergence of patterns.
\end{proposition}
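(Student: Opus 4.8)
The plan is to reduce the lower semicontinuity to a one-dimensional statement on single fibres and there combine the lower semicontinuity of the cost density from Proposition~\ref{prop:mass_is_upper_semicontinuous} with the fact that length controls chord lengths.

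So assume $\chi_n\to\chi$ pointwise with $\liminf_n\urbPlMMS^{\varepsilon,a}(\chi_n)<\infty$ (otherwise there is nothing to show) and pass to a subsequence realising this liminf. Writing $g^\psi(p)=\int_I r_{\varepsilon,a}^\psi(\psi(p,t))\,|\dot\psi(p,t)|\,\de t\ge0$ for any pattern $\psi$, Tonelli's theorem gives $\urbPlMMS^{\varepsilon,a}(\psi)=\int_\reSpace g^\psi(p)\,\de\reMeasure(p)$, so by Fatou's lemma it suffices to prove $g^\chi(p)\le\liminf_n g^{\chi_n}(p)$ for $\reMeasure$-a.e.\ $p$. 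Fix a \emph{good} $p$, i.e.\ one in the full-measure set on which $\chi_p\in\AC(I)$ and $\chi_n(p,\cdot)\to\chi(p,\cdot)$ uniformly; an inspection of the proof of Proposition~\ref{prop:mass_is_upper_semicontinuous} shows that for every such $p$ one has $\repsilonachi(\chi(p,t))\le\liminf_k r_{\varepsilon,a}^{\chi_{n_k}}(\chi_{n_k}(p,s_k))$ for \emph{any} $t\in I$, \emph{any} $s_k\to t$ and \emph{any} subsequence $(n_k)$, so this exceptional null set of fibres does not depend on the auxiliary sequences introduced below.

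Abbreviate $\gamma:=\chi(p,\cdot)\in\AC(I)$, $\gamma_n:=\chi_n(p,\cdot)$, $\rho:=\repsilonachi$, $\rho_n:=r_{\varepsilon,a}^{\chi_n}$, and recall $1\le\rho_n,\rho\le a$. Pick $n_k$ with $g^{\chi_{n_k}}(p)\to\ell_p:=\liminf_n g^{\chi_n}(p)$; if $\ell_p=\infty$ we are done, so let $\ell_p<\infty$. Since $\rho_{n_k}\ge1$, the lengths $\int_I|\dot\gamma_{n_k}|\le g^{\chi_{n_k}}(p)$ stay bounded, hence the nonnegative measures $\nu_k:=(\rho_{n_k}\circ\gamma_{n_k})\,|\dot\gamma_{n_k}|\,\lebesgue^1\restr I$ have uniformly bounded mass; along a further subsequence (not relabelled) $\nu_k\weakstarto\nu$ on the compact interval $I$, and testing against the constant $1$ gives $\nu(I)=\lim_k\nu_k(I)=\ell_p$. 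It remains to show $\nu\ge(\rho\circ\gamma)\,|\dot\gamma|\,\lebesgue^1$ on $I$, for then $\ell_p=\nu(I)\ge\int_I\rho(\gamma(t))|\dot\gamma(t)|\,\de t=g^\chi(p)$.

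By the Besicovitch differentiation theorem this last inequality of finite positive measures reduces to $\liminf_{h\downarrow0}\tfrac1{2h}\nu([t-h,t+h])\ge\rho(\gamma(t))|\dot\gamma(t)|$ at every differentiability point $t$ of $\gamma$. Here Proposition~\ref{prop:mass_is_upper_semicontinuous} is the key tool: for such $t$ and $\delta\in(0,1)$ one has $\rho_{n_k}(\gamma_{n_k}(s))\ge\rho(\gamma(t))-\delta$ for all $s$ in a neighbourhood of $t$ and all large $k$, since otherwise there are $h_j\downarrow0$, indices $k_j\to\infty$ and $s_{k_j}\in[t-h_j,t+h_j]$ violating this, whereas $s_{k_j}\to t$ and the uniform convergence on the fibre $p$ force $\gamma_{n_{k_j}}(s_{k_j})\to\gamma(t)$, contradicting the inequality recorded above. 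Consequently, for small $h$ and large $k$, $\nu_k([t-h,t+h])\ge(\rho(\gamma(t))-\delta)\int_{t-h}^{t+h}|\dot\gamma_{n_k}|\ge(\rho(\gamma(t))-\delta)\,|\gamma_{n_k}(t+h)-\gamma_{n_k}(t-h)|$; letting $k\to\infty$ (using $\nu([t-h,t+h])\ge\limsup_k\nu_k([t-h,t+h])$ on the closed interval together with $\gamma_{n_k}\to\gamma$ uniformly), then $h\downarrow0$, then $\delta\downarrow0$, yields the density bound, hence $g^\chi(p)\le\ell_p$, which with the reduction above proves the proposition. The main obstacle is precisely this density estimate, where the varying weights $\rho_{n_k}$ must be matched, at the correct infinitesimal scale, with $\rho$ evaluated at the limiting point, and Proposition~\ref{prop:mass_is_upper_semicontinuous} is the only available tool for this transfer; one must also confirm that its exceptional null set of fibres is independent of the ($p$-dependent) subsequences so that it may be invoked at every good $p$, whereas the remaining ingredients --- length control from $\rho_n\ge1$, weak-$*$ precompactness and the Portmanteau inequality on the compact $I$, Tonelli and Fatou, and Besicovitch differentiation --- are routine.
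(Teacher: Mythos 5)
Your proof is correct, and its skeleton coincides with the paper's: reduce to a single fibre via Tonelli and Fatou, and feed the lower semicontinuity of $\repsilonachi$ along the sequence (Proposition~\ref{prop:mass_is_upper_semicontinuous}) into a statement of the form $\int_I \repsilonachi(\chi_p)\,\de\mu \leq \liminf_n \int_I r_{\varepsilon,a}^{\chi_n}(\chi_n(p,\cdot))\,\de\mu_n$ with $\mu_n=|\dot\chi_n(p,\cdot)|\,\de t$. The difference lies in how that fibre-wise inequality is obtained. The paper invokes an abstract joint lower semicontinuity theorem for integrals of ``sequentially l.s.c.'' integrands against weakly converging measures (\cite[Def.\,C.1 and Thm.\,C.1]{Maddalena-Solimini-Synchronic}), after observing that $\mu(A)\leq\liminf_n\mu_n(A)$ for open $A$. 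You instead prove this step from scratch: extract a weak-$*$ limit $\nu$ of $(\rho_{n_k}\circ\gamma_{n_k})|\dot\gamma_{n_k}|\lebesgue^1$ and establish $\nu\geq(\rho\circ\gamma)|\dot\gamma|\lebesgue^1$ by a Besicovitch density estimate, matching the weights at scale $h$ via a contradiction argument based on Proposition~\ref{prop:mass_is_upper_semicontinuous} and controlling $\int_{t-h}^{t+h}|\dot\gamma_{n_k}|$ from below by the chord length. This buys a self-contained argument (no appeal to the external Theorem C.1) and your explicit verification that the exceptional null set of fibres in Proposition~\ref{prop:mass_is_upper_semicontinuous} is independent of the time sequence and subsequence is a point the paper leaves implicit; the cost is a longer argument that uses the specific product structure of the measures rather than a general principle. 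The only cosmetic omission is that the ``good'' set of fibres should also include the requirement $\chi_n(p,\cdot)\in\AC(I)$ for all $n$ (still of full measure), which you use when bounding $\int_{t-h}^{t+h}|\dot\gamma_{n_k}|$ by the chord length.
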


\begin{proof}
Let $\chi_n$ be a sequence of irrigation patterns converging pointwise to the pattern $\chi$. For a given integer $n$ and a given fibre $p$, define $\mu_n = |\dot\chi_n(p,\cdot)|\,\de t$ and $\mu = |\dot\chi(p,\cdot)|\de t$.
As a consequence of the uniform convergence of $\chi_n$ we have $\dot\chi_n(p,\cdot)\to\dot\chi(p,\cdot)$ in the distributional sense and thus
\begin{displaymath}
\mu(A)\leq\liminf_{n\to\infty}\mu_n(A)
\end{displaymath}
for any open $A\subset I$. Thanks to \cite[Def.\,C.1 and Thm.\,C.1]{Maddalena-Solimini-Synchronic} and Proposition\,\ref{prop:mass_is_upper_semicontinuous} we thus have
\begin{displaymath}
\int_I \repsilonachi(\chi(p,t)) \,\de\mu(t) \leq \liminf_{n\to\infty} \int_I r_{\varepsilon,a}^{\chi_n}(\chi_n(p,t)) \,\de\mu_n(t).
\end{displaymath}
Integrating with respect to $\reMeasure$ and applying Fatou's Lemma ends the proof.
% Now let
% \begin{equation*}
% r_\gamma(t)=\inf\{\repsilonachi(\chi(p,\tilde t))+\tfrac1{2\gamma}|t-\tilde t|\,:\,\tilde t\in I\}
% \end{equation*}
% for $\gamma>0$ be a Moreau--Yosida-type regularisation of $\repsilonachi(\chi(p,\cdot))$,
% then $r_\gamma$ is non-negative, Lipschitz, and bounded above by $\repsilonachi(\chi(p,\cdot))$.
% Proposition\,\ref{prop:mass_is_upper_semicontinuous} and \cite[Lemma\,5.3]{Brancolini-Buttazzo} now imply
% that for any $\delta>0$ it holds $r_\gamma\leq r_{\varepsilon,a}^{\chi_n}(\chi_n(p,\cdot))+\delta$ for all $n$ sufficiently large.
% Therefore we have
% \begin{multline*}
% \int_I r_\gamma(t) \,\de\mu(t) -\delta\mu(I)
% \leq \int_I \max(r_\gamma(t)-\delta,0) \,\de\mu(t)\\
% \leq \liminf_n \int_I \max(r_\gamma(t)-\delta,0) \,\de\mu_n(t)
% \leq \liminf_n \int_I r_{\varepsilon,a}^{\chi_n}(\chi_n(p,t)) \,\de\mu_n(t)\,.
% \end{multline*}
% Upon taking the supremum over $\gamma>0$ and $\delta>0$ we obtain
% \begin{displaymath}
% \int_I \repsilonachi(\chi(p,t)) \,\de\mu(t) \leq \liminf_n \int_I r_{\varepsilon,a}^{\chi_n}(\chi_n(p,t)) \,\de\mu_n(t).
% \end{displaymath}
% Integrating with respect to $\reMeasure$ and applying Fatou's Lemma ends the proof.
\end{proof}

\subsection{Equivalence between the formulations}%\label{sec:diffFormUrbPl}

\begin{definition}[Urban planning energies]\label{def:urban_plannning_energy}
Given $\mu_+,\mu_-\in\fbm(\R^n)$ of equal mass, for an irrigation pattern $\chi$, a mass flux $\flux$, and a set $\Sigma\subset\R^n$ we define
\begin{equation*}
\urbPlEn^{\varepsilon,a}[\chi]=\urbPlMMS^{\varepsilon,a}(\chi)\,,\quad
\urbPlEn^{\varepsilon,a}[\flux]=\urbPlXia^{\varepsilon,a}(\flux)\,,
\end{equation*}
as well as
\begin{align*}
 \urbPlEn^{\varepsilon,a,\mu_+,\mu_-}[\chi]
 &=\begin{cases}
    \urbPlEn^{\varepsilon,a}[\chi]&\text{if $\mu_+^\chi = \mu_+$ and $\mu_-^\chi = \mu_-$},\\
    \infty&\text{else,}
   \end{cases}\\
 \urbPlEn^{\varepsilon,a,\mu_+,\mu_-}[\flux]
 &=\begin{cases}
    \urbPlEn^{\varepsilon,a}[\flux]&\text{if }\dv\flux=\mu_+-\mu_-,\\
    \infty&\text{else,}
   \end{cases}\\
 \urbPlEn^{\varepsilon,a,\mu_+,\mu_-}[\Sigma]
 &=\begin{cases}
    \Wd{d_\Sigma}(\mu_+,\mu_-)+\varepsilon\hdone(\Sigma)&\text{if }\Sigma\text{ is rectifiable,}\\
    \infty&\text{else,}
   \end{cases}
\end{align*}
with $\Wd{d_\Sigma}$ defined in Section\,\ref{sec:introUrbPlan}.
\end{definition}

\begin{remark}
Recall that $\urbPlEn^{\varepsilon,a,\mu_+,\mu_-}[\Sigma]$ has a minimiser, thanks to \cite[Lem.\,4.10, Lem.\,4.11, Prop.\,4.15, and Thm.\,4.26]{BuPrSoSt09}.
\end{remark}

The next theorem is the main result of this paper. Its proof will be the object of the next section (Section \ref{sec:proof_of_main_theorem}).

\begin{theorem}[Equivalence of urban planning energies]\label{thm:urban_plannning_energy_equivalences}
The minimisation problems in Definition \ref{def:urban_plannning_energy} are equivalent in the sense that, for $\mu_+,\mu_-$ of equal mass and with bounded support, they possess minimisers and satisfy
\begin{equation*}
\min_\chi\urbPlEn^{\varepsilon,a,\mu_+,\mu_-}[\chi]=\min_\flux\urbPlEn^{\varepsilon,a,\mu_+,\mu_-}[\flux]=\min_\Sigma\urbPlEn^{\varepsilon,a,\mu_+,\mu_-}[\Sigma]\,.
\end{equation*}
Similarly to branched transport, there are optima $\chi$, $\flux$, and $\Sigma$ that can be identified with each other via
\begin{gather}
        \int_{\R^n}\varphi\cdot\de\flux=\int_\reSpace\int_I\varphi(\chi_p(t))\cdot\dot\chi_p(t)\,\de t\,\de \reMeasure(p)\text{ for all }\varphi\in\cont_c(\R^n;\R^n)\,,\label{eqn:OptFluxIdentif}\\
        \Sigma=\{x\in\R^n\,:\,m_\chi(x)>\tfrac\varepsilon{a-1}\}\,.\label{eqn:OptSigmaIdentif}
\end{gather}
\end{theorem}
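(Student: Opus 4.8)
The strategy is to establish the chain of equalities by going around the cycle $\min_\chi \geq \min_\flux \geq \min_\Sigma \geq \min_\chi$ (or a convenient reordering), constructing for each comparison an explicit competitor that transfers cost without increase, and then showing that the construction applied to an optimum yields the claimed identifications \eqref{eqn:OptFluxIdentif}--\eqref{eqn:OptSigmaIdentif}. I would organize it as follows. First, the inequality $\min_\flux \urbPlEn^{\varepsilon,a,\mu_+,\mu_-}[\flux] \leq \min_\chi \urbPlEn^{\varepsilon,a,\mu_+,\mu_-}[\chi]$: given a pattern $\chi$ with $\mu_+^\chi=\mu_+$, $\mu_-^\chi=\mu_-$, reparameterise it to be Lipschitz with constant speed (Proposition\,\ref{thm:constSpeedPatternsUrbPl}), discretise both the reference space and the time interval to obtain discrete graphs $G_k$ whose fluxes $\flux_{G_k}$ weak-$*$ converge to the measure $\flux$ defined by the right-hand side of \eqref{eqn:OptFluxIdentif}, and check that $\liminf_k \urbPlXia^{\varepsilon,a}(G_k) \leq \urbPlMMS^{\varepsilon,a}(\chi)$; the subtle point is that on a discrete graph several distinct fibres traversing a common segment get their masses added, so $c^{\varepsilon,a}$ is evaluated on the aggregated multiplicity, which is exactly $m_\chi$ along coincident fibres — this is where the concavity/subadditivity of $c^{\varepsilon,a}$ and the fact that optimal (or at least finite-cost) patterns have the single-path-type behaviour are used, and it mirrors the branched-transport argument of Bernot--Caselles--Morel cited for Theorem\,\ref{thm:equivalenceBrTpt}. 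Conversely, for $\min_\chi \leq \min_\flux$ one takes a finite-cost flux $\flux$, writes it via Remark\,\ref{rem:flux_equivalent} as $\vel\,\hdone\restr\Sigma$ on a rectifiable set with multiplicity $|\vel|$, and builds a pattern out of it (this is Proposition\,\ref{prop:constructFluxFromPattern} referenced in the text, or its inverse), again matching $\repsilonachi = c^{\varepsilon,a}(m_\chi)/m_\chi$ against $c^{\varepsilon,a}(|\vel|)/|\vel| \cdot |\vel|$.

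Second, for the equivalence with the network formulation $\min_\Sigma$: given an optimal flux $\flux = \vel\,\hdone\restr\Sigma'$, define $\Sigma$ via \eqref{eqn:OptSigmaIdentif} as the "expensive" part $\{x : m_\chi(x) > \varepsilon/(a-1)\}$, equivalently $\{x \in \Sigma' : |\vel(x)| > \varepsilon/(a-1)\}$, which is precisely the set of points where $c^{\varepsilon,a}(|\vel|) = |\vel| + \varepsilon$ rather than $a|\vel|$ (the threshold $am = m+\varepsilon$ gives $m = \varepsilon/(a-1)$). On this $\Sigma$, transport along the flux costs $|\vel| + \varepsilon$ per unit length, so the $\varepsilon\hdone(\Sigma)$ term in $\urbPlEn^{\varepsilon,a}[\Sigma]$ accounts exactly for the network-maintenance surcharge, while off $\Sigma$ the cost is $a|\vel|$, matching travel at the autarkic rate $a$; thus the Wasserstein cost $\Wd{d_\Sigma}(\mu_+,\mu_-)$ — realised by pushing mass along the fibres of $\chi$, using the network metric $d_\Sigma$ which charges $1$ on $\Sigma$ and $a$ off it — plus $\varepsilon\hdone(\Sigma)$ reproduces $\int c^{\varepsilon,a}(|\vel|)\,\de\hdone$. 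This gives $\min_\Sigma \leq \min_\flux$. For the reverse, given an optimal rectifiable network $\Sigma$ and an optimal transport plan $\mu \in \Pi(\mu_+,\mu_-)$ for $d_\Sigma$, disintegrate it and choose for $\mu$-a.e.\ pair $(x,y)$ a $d_\Sigma$-geodesic; assembling these curves into a pattern and then into its associated flux produces a competitor whose cost is at most $\Wd{d_\Sigma}(\mu_+,\mu_-) + \varepsilon\hdone(\Sigma)$, where the $\varepsilon\hdone(\Sigma)$ has to be "paid for" by the aggregation effect — here one invokes that along $\Sigma$ the transported mass is bounded below (by $\varepsilon/(a-1)$) on the portion actually used, which follows from optimality of $\Sigma$ exactly as in the finite-graph discussion preceding the flux formulation. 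Existence of minimisers for the pattern and flux formulations then follows from the equivalence together with the known existence for $\min_\Sigma$ (cited from \cite{BuPrSoSt09}), which also supplies Remark\,\ref{rem:existenceOptPatternUrbPl} referenced earlier.

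Third, the identification formulas. Formula \eqref{eqn:OptFluxIdentif} is just the statement that the flux associated to a pattern is the vector measure $\varphi \mapsto \int_\reSpace \int_I \varphi(\chi_p(t)) \cdot \dot\chi_p(t)\,\de t\,\de\reMeasure(p)$; one verifies it is well defined, has divergence $\mu_+^\chi - \mu_-^\chi$, and — by the coincidence-of-tangents argument used verbatim in the proof of Theorem\,\ref{thm:equivalenceBrTpt} — equals $m_\chi \hat e\,\hdone\restr\{m_\chi>0\}$, so the two constructions in the first paragraph are mutually inverse on optima. Formula \eqref{eqn:OptSigmaIdentif} is then immediate from the second paragraph, since the optimal $\Sigma$ is characterised as the set where the marginal cost of using the network beats the autarkic rate, i.e.\ $a\,m_\chi(x) > m_\chi(x) + \varepsilon$.

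**Main obstacle.** The delicate part is the inequality $\min_\flux \leq \min_\chi$ together with its sharpness on optima: one must control, in the discretisation, the fact that $c^{\varepsilon,a}$ is evaluated on the \emph{sum} of the speeds of all fibres sharing a discretised edge, and argue that in the limit this aggregated multiplicity converges to $m_\chi$ rather than to something strictly smaller (which would happen if distinct fibres only overlapped on a set of measure zero in an uncontrolled way). Because $c^{\varepsilon,a}$ is merely subadditive, not strictly so, one does not get to exploit a strict-inequality gain to force overlap; instead one must quote or reprove that finite-cost urban-planning patterns may be taken loop-free and with the single-path property — the analogue of the branched-transport structure results — so that the aggregation is genuine along the common trajectories. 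Handling the non-strict subadditivity, the possibly non-rectifiable or non-graph-like structure of optima (flux absolutely continuous with respect to $\lebesgue^n$ in regions, as in Figure\,\ref{fig:urban_planning}), and the measurable selection of geodesics in the $d_\Sigma$ metric are the points that will require the most care, and are presumably why the full proof is deferred to Section\,\ref{sec:proof_of_main_theorem}.
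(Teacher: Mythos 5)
Your high-level targets are right (the threshold $m=\varepsilon/(a-1)$, the role of subadditivity of $c^{\varepsilon,a}$, the need for loop-freeness and a single-path selection), but the concrete route has a gap that would make two of your three comparisons fail. Both your step $\min_\chi\leq\min_\flux$ and your step $\min_\Sigma\leq\min_\flux$ start from an optimal flux written via Remark\,\ref{rem:flux_equivalent} as $\flux=\vel\,\hdone\restr\Sigma'$ on a rectifiable set. That representation is a branched-transport fact: it relies on finiteness of $\XiaEn^\alpha$ with $\alpha<1$ and does not follow from finiteness of $\urbPlXia^{\varepsilon,a}$. Indeed the paper explicitly warns (remark after the flux formulation and Figure\,\ref{fig:urban_planning}) that urban-planning mass fluxes of finite cost can be locally absolutely continuous with respect to $\lebesgue^n$, precisely because $c^{\varepsilon,a}$ is not \emph{strictly} subadditive. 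You flag this as a ``point requiring care'' at the end, but the route you propose is built on the representation that fails, so care alone does not repair it. The paper instead never extracts a rectifiable carrier from a flux: it takes an approximating graph sequence realising $\urbPlXia^{\varepsilon,a}(\flux)$, removes cycles (Lemma\,\ref{lem:cycleCost}), truncates long paths (Lemma\,\ref{lem:bound_on_fibre_length}), decomposes the resulting acyclic graphs into transport path measures (Remark\,\ref{rem:fluxDecomp}), passes to the limit by Lemma\,\ref{lem:compactness_lemma_for_transport_path_measures} and Skorokhod, and concludes by lower semicontinuity of $\urbPlMMS^{\varepsilon,a}$ under pointwise convergence (Proposition\,\ref{prop:urban_planning_energy_is_lower_semicontinuous}). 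Relatedly, the paper's chain is anchored at the pattern formulation ($\Sigma\to\chi$, $\chi\to\Sigma$, $\chi\to\flux$, $\flux\to\chi$); the set $\Sigma=\{m_\chi>\varepsilon/(a-1)\}$ is built from a \emph{pattern}, where rectifiability is automatic, not from a flux.

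The other direction, $\min_\flux\leq\min_\chi$, also differs in a way that matters. You propose to discretise a given finite-cost pattern in space and time and show the discrete graph costs do not exceed $\urbPlMMS^{\varepsilon,a}(\chi)$; as you yourself note, coincident fibres could land on distinct discretised edges, and since splitting mass \emph{increases} the subadditive cost, the inequality could go the wrong way. You name this as the main obstacle but do not resolve it. The paper sidesteps it entirely: it discretises the \emph{measures} $\mu_\pm$ on a grid $h\Z^n$, takes \emph{optimal} patterns $\chi^h$ for the discrete data (compared against the continuous optimum by appending short connecting segments of length $\leq h\sqrt n$, giving $\urbPlEn^{\varepsilon,a,\mu_+,\mu_-}[\chi]\geq\limsup_h\urbPlEn^{\varepsilon,a,\mu_+^h,\mu_-^h}[\chi^h]$), and then shows that an optimal discrete pattern with the single path property \emph{is} a finite graph with straight edges, so $\urbPlEn^{\varepsilon,a,\mu_+^h,\mu_-^h}[\chi^h]=\urbPlXia^{\varepsilon,a}(G^h)$ exactly, with no aggregation loss to control. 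If you want to salvage your outline, replace the flux-representation steps by the approximating-graph-sequence machinery and replace the pattern-discretisation step by the discretise-the-data-and-optimise argument.
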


\subsection{Regularity properties}%\label{subsec:regularity_properties_of_the_minimizers}

A consequence of the above-stated equivalence between the different models is the fact that regularity issues can now be considered in the most convenient formulation.
As an example, we state the following single path property of minimisers to the urban planning problem.
Its proof is given at the end of Section\,\ref{sec:EquPatternSet}.

\begin{proposition}[Single path property for the urban planning problem]\label{prop:single_path_property_for_the_urban_planning_problem}
There exists an optimal irrigation pattern $\chi$ for $\urbPlEn^{\varepsilon,a,\mu_+,\mu_-}$ which has the single path property (see Definition \ref{def:sigle_path_property}).
\end{proposition}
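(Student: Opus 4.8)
The plan is to transfer the single path property from branched transport, where it is already known for optimal patterns, to urban planning via a comparison argument. First I would recall from the remark following Definition \ref{def:sigle_path_property} that optimal patterns for the branched transport energy $\brTptEn^{\alpha,\mu_+,\mu_-}$ are loop-free and enjoy the single path property, and that Theorem \ref{thm:urban_plannning_energy_equivalences} guarantees the existence of at least one optimal pattern $\chi$ for $\urbPlEn^{\varepsilon,a,\mu_+,\mu_-}$. The issue is that an arbitrary minimiser of $\urbPlMMS^{\varepsilon,a}$ need not have the single path property: since $c^{\varepsilon,a}$ is subadditive but not \emph{strictly} subadditive, there can be multiple disjoint optimal trajectories between two points carrying total mass below the threshold $\tfrac{\varepsilon}{a-1}$, each transported at the marginal rate $a$ with no incentive to merge. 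So the goal is to exhibit \emph{one} optimal pattern with the property, not to show all optimal patterns have it.

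The key steps, in order: (1) Take an optimal urban planning pattern $\chi$ and the associated set $\Sigma = \{x : m_\chi(x) > \tfrac{\varepsilon}{a-1}\}$ from \eqref{eqn:OptSigmaIdentif}. On $\Sigma$ the cost density is $\repsilonachi(x) = 1 + \tfrac{\varepsilon}{m_\chi(x)}$, which is \emph{strictly} decreasing in $m_\chi(x)$; away from $\Sigma$ the density is the constant $a$. (2) Split the transport into the part routed through $\Sigma$ and the part routed outside it. For the part outside $\Sigma$, the cost is purely $a$ times arc length — exactly a weighted Monge cost — so one may replace those trajectories by geodesics of the metric $d_\Sigma$ without increasing the cost; in particular one can choose, for each source–sink pair of the corresponding transport plan, a \emph{single} connecting curve, so the single path property holds trivially there (all curves between a given $x,y$ pair coincide by construction). (3) For the part routed through $\Sigma$, invoke the strict subadditivity: were there two essentially different families of trajectories between points $x,y$ with $\reMeasure(\reSpace^\chi_{\overrightarrow{xy}})>0$ both passing through the high-multiplicity region, rerouting all of them along a common path would strictly decrease $\int \repsilonachi |\dot\chi|$ by the same merging argument used for branched transport (cf.\ \cite[Sec.\,3]{Bernot-Caselles-Morel-Structure-Branched}), contradicting optimality. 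Here I would reuse the loop-free reduction: first replace $\chi$ by a loop-free optimal pattern (loops can be cut without increasing cost, since $\repsilonachi \ge 1 > 0$), then apply the branched-transport-style argument of \cite[Def.\,3.3]{Bernot-Caselles-Morel-Structure-Branched} verbatim on the region where the density is strictly concave in the mass. (4) Glue the reparameterised, rerouted fibres back together into a single pattern $\tilde\chi$, check via Proposition \ref{prop:reparameterised_patterns_have_the_same_cost}-type invariance that $\mu_+^{\tilde\chi}=\mu_+$, $\mu_-^{\tilde\chi}=\mu_-$ and $\urbPlMMS^{\varepsilon,a}(\tilde\chi) \le \urbPlMMS^{\varepsilon,a}(\chi)$, so $\tilde\chi$ is again optimal and now has the single path property.

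The main obstacle I anticipate is step (2)–(3), namely handling the low-multiplicity region cleanly: because $c^{\varepsilon,a}$ is only weakly subadditive, one cannot argue that \emph{every} optimal pattern merges trajectories there, and one must instead make an explicit \emph{choice} — for instance, once $\Sigma$ is fixed, re-solve the outside-$\Sigma$ transport as a genuine Monge problem for $d_\Sigma$ and fix a measurable selection of geodesics (one curve per pair), which requires a measurable selection lemma and a verification that this selection is compatible with the fibres entering and leaving $\Sigma$. A secondary technical point is ensuring that cutting loops and rerouting does not destroy measurability of $p \mapsto \tilde\chi_p$; this is routine but must be stated, and can be handled exactly as in the proof of Proposition \ref{prop:arc-length_reparameterisation_for_patterns}. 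Everything else — the strict-concavity merging argument on $\Sigma$ and the cost bookkeeping — is a direct adaptation of the branched transport case and of the arguments to be developed in Section \ref{sec:proof_of_main_theorem}.
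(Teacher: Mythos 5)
Your plan diverges substantially from the paper's proof, and in its present form it has a genuine gap. The paper does not start from an optimal pattern and post-process it region by region; it starts from an optimal \emph{set} $\Sigma$ and observes that the pattern built in Proposition~\ref{prop:constructPatternFromSet} --- a parameterisation of a loop-free optimal transport path measure from Proposition~\ref{prop:OptTPM} --- can be chosen with the single path property essentially for free. The point is that once $\Sigma$ is fixed, the relevant energy $\int_\Theta a\hdone(\theta(I)\setminus\Sigma)+\hdone(\theta(I)\cap\Sigma)\,\de\eta(\theta)+\varepsilon\hdone(\Sigma)$ is \emph{linear} in $\eta$: each path is costed independently of all the others, almost every path is a $d_\Sigma$-geodesic, and any two geodesic subarcs joining the same $x$ and $y$ have the same cost $d_\Sigma(x,y)$. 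Redirecting all fibres through $x$ and $y$ onto one common subarc is therefore energy-neutral, and the redirected pattern still satisfies $\urbPlEn^{\varepsilon,a,\mu_+,\mu_-}[\chi]\le\urbPlEn^{\varepsilon,a,\mu_+,\mu_-}[\Sigma]=\min$, hence remains optimal. No subadditivity of $c^{\varepsilon,a}$ enters at all.

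The gap in your plan sits precisely at the coupling you try to break in steps (2)--(3). First, rerouting fibres changes $m_\chi$ and hence the density $\repsilonachi$ seen by \emph{every other} fibre, so ``replace the outside-$\Sigma$ trajectories by $d_\Sigma$-geodesics without increasing the cost'' is not a local operation on the pattern functional $\urbPlMMS^{\varepsilon,a}$; this circularity is exactly what the set-based reformulation removes. Second, your step (3) only addresses two families \emph{both} confined to the high-multiplicity region, where $c^{\varepsilon,a}$ is affine rather than strictly concave, so the branched-transport argument does not apply ``verbatim''; one must check that merging onto the \emph{shorter} of the two arcs wins (merging onto the longer one can increase the cost), which works for disjoint arcs but requires an argument. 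More seriously, the configuration of Remark~\ref{rem:noSinglePath} --- one family inside $\Sigma$, one outside, with every value of the mass split optimal --- shows that in the mixed regime there is \emph{no} strict improvement available, so no contradiction-with-optimality argument can force merging there; an explicit energy-neutral selection must be made, and your plan defers exactly this step. The fix is to abandon the decomposition and perform the redirection at the level of the transport path measure with $\Sigma$ held fixed, as the paper does.
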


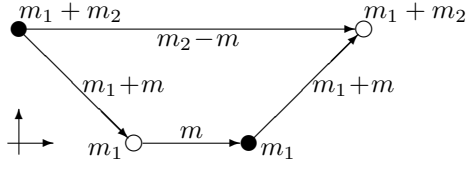
\begin{figure}
\centering
\setlength{\unitlength}{10ex}
\begin{picture}(3,1.1)
\put(-.1,0){\vector(1,0){.4}}
\put(0,-.1){\vector(0,1){.4}}
\put(0,1){\circle*{.15}}
\put(1,0){\circle{.15}}
\put(2,0){\circle*{.15}}
\put(3,1){\circle{.15}}
\put(.6,-.1){$m_1$}
\put(2.1,-.1){$m_1$}
\put(3,1.1){$m_1+m_2$}
\put(0,1.1){$m_1+m_2$}
\put(0,1){\vector(1,-1){.95}}
\put(2,0){\vector(1,1){.95}}
\put(0,1){\vector(1,0){2.925}}
\put(1.075,0){\vector(1,0){.85}}
\put(.55,.45){$m_1\!+\!m$}
\put(2.55,.45){$m_1\!+\!m$}
\put(1.4,.05){$m$}
\put(1.2,.85){$m_2\!-\!m$}
\end{picture}
\caption{Optimal particle motions for the irrigation problem from Remark\,\ref{rem:noSinglePath}
(filled circles denote sources, open ones sinks, each arrow is labelled with its mass flux).
Any $0\leq m\leq m_2$ yields an optimal mass flux or irrigation pattern.}
\label{fig:noSinglePath}
\end{figure}

\begin{remark}\label{rem:noSinglePath}
Unlike for branched transport there exist optimal irrigation patterns not satisfying the single path property, for instance for
\begin{equation*}
\mu_+=(m_1+m_2)\delta_{(0,s)}+m_1\delta_{(2,0)}\,,\quad
\mu_-=m_1\delta_{(1,0)}+(m_1+m_2)\delta_{(3,s)}
\end{equation*}
with $m_1\geq\frac\varepsilon{a-1}\geq m_2$ and $s=\sqrt{a^2-1}$ (see Figure\,\ref{fig:noSinglePath}).
\end{remark}

\section{Proof of Theorem \ref{thm:urban_plannning_energy_equivalences} and of Proposition \ref{prop:single_path_property_for_the_urban_planning_problem}}\label{sec:proof_of_main_theorem}

In this section Theorem \ref{thm:urban_plannning_energy_equivalences} is proved in four steps, corresponding to Propositions\,\ref{prop:constructPatternFromSet}, \ref{prop:constructSetFromPattern}, \ref{prop:constructFluxFromPattern}, and \ref{prop:constructPatternFromFlux}. The proof of \ref{prop:single_path_property_for_the_urban_planning_problem} follows as a corollary of those of Propositions \ref{prop:constructPatternFromSet} and \ref{prop:constructSetFromPattern}. We first introduce some necessary notions concerning measures on the set of paths and cycles in discrete graphs.

\subsection{Transport path measures and other preliminary definitions and results}

\begin{definition}[Transport path measures]\label{def:transport_path_measures}
Let $\Theta=\Lip(I)$ be the set of Lipschitz curves $I \to \R^n$ with the metric
\begin{equation*}
d_\Theta(\theta_1,\theta_2)=\inf\left\{\max_{t\in I}|\theta_1(t)-\theta_2(\varphi(t))|\ :\ \varphi:I\to I\text{ increasing and bijective}\right\}.
\end{equation*}
Following \cite[Def.\,2.5]{BuPrSoSt09}, a \emph{transport path measure} is a measure $\eta$ on $\Theta$ (endowed with the Borel algebra).
If $\mu_+,\mu_- \in \fbm(\R^n)$, we say that the transport path measure $\eta$ moves $\mu_+$ onto $\mu_-$ if
\begin{displaymath}
 \pushforward{p_0}{\eta} = \mu_+,\quad \pushforward{p_1}{\eta} = \mu_-,
\end{displaymath}
where, given $t \in I$, $p_t : \Theta \to \R^n$ is defined by $p_t(\theta) = \theta(t)$. We denote by $\TPM(\mu_+,\mu_-)$ the set of transport path measures moving $\mu_+$ onto $\mu_-$.
\end{definition}

\begin{remark}[Compact sets in $\Theta$]%\label{rem:compactness_for_transport_path_measures}
The following compactness result can be obtained via the Ascoli--Arzel\`a Theorem (see \cite[p.7]{BuPrSoSt09}).
Let $\theta_1,\theta_2,\ldots$ be a sequence in $\Theta$. Suppose that the $\theta_n$ have uniformly bounded lengths and $\theta_n(0) \in \Omega$ for a compact subset $\Omega$ of $\R^n$. Then, the sequence $\theta_n$ is relatively compact in $\Theta$.
\end{remark}

\begin{definition}[Parameterisation of transport path measures]
Given a reference space $(\reSpace,\Bcal(\reSpace),\reMeasure)$,
a \emph{parameterisation} of a transport path measure $\eta$ is a function $\chi : \reSpace \to \Theta$ such that $\pushforward{\chi}{\reMeasure} = \eta$. With little abuse of notation, we write $\chi(p,t)$ instead of $\chi(p)(t)$ (the position of particle $p$ at time $t$).

Notice that the map $\chi : \reSpace \times I \to \R^n$ satisfies
\begin{itemize}
 \item $\chi(p,\cdot) \in \Theta$ for a.\,e.\ $p \in \reSpace$,
 \item $\chi(\cdot,t)$ is measurable for all $t \in I$.
\end{itemize}
\end{definition}

\begin{remark}
Note that a parameterisation of a transport path measure always exists thanks to Skorokhod's Theorem (see \cite[App.\,A, Thm.\,A.3]{BeCaMo09} or \cite[Thm.\,11.7.2]{Dudley}).
\end{remark}

\begin{definition}[Cost of a transport path measure]
Let $a > 1$ and $\Sigma$ be a Borel set with $\hdone(\Sigma) < \infty$. Following \cite[Chap.\,2, eq.\,\mbox{(2.2)}]{BuPrSoSt09}, we define the \emph{cost functional} $\urbPlTPM^{a}$ as
\begin{displaymath}
 \urbPlTPM^{a}(\eta) = \int_\Theta a\hdone(\theta(I)\setminus\Sigma) + \hdone(\theta(I)\cap\Sigma) \,\de \eta(\theta)\,.
\end{displaymath}
\end{definition}

The following is a slight refinement of \cite[Cor.\,2.12 and Prop.\,2.14]{BuPrSoSt09}; we thus only repeat the relevant parts of the proof.

\begin{proposition}[Optimal transport path measures]\label{prop:OptTPM}
Given a bounded Borel set $\Sigma\subset\R^n$ with $\hdone(\Sigma)<\infty$ and $\mu_+,\mu_-\in\fbm(\R^n)$ with equal mass and compact support,
there exists a minimiser $\eta\in \TPM(\mu_+,\mu_-)$ of $\urbPlTPM^{a}$
such that $\eta$-a.\,e.\ path $\theta\in\Theta$ is loop-free.
Furthermore,
\begin{displaymath}
 \urbPlTPM^{a}(\eta) = \Wd{d_\Sigma}(\mu_+,\mu_-)\,.
\end{displaymath}
\end{proposition}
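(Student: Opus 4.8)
The plan is to prove the two assertions—existence of a loop-free minimiser and the identity $\urbPlTPM^{a}(\eta)=\Wd{d_\Sigma}(\mu_+,\mu_-)$—by relating transport path measures to transport plans for the metric $d_\Sigma$. First I would establish the inequality $\inf_{\eta}\urbPlTPM^{a}(\eta)\le\Wd{d_\Sigma}(\mu_+,\mu_-)$: given an optimal transport plan $\mu\in\Pi(\mu_+,\mu_-)$ for $d_\Sigma$ (which exists by standard Monge--Kantorovich theory since $d_\Sigma$ is a metric bounded above by $a|x-y|$ and the measures have compact support), and given for each pair $(x,y)$ a near-optimal curve $\theta_{x,y}\in C_{x,y}$ realising $d_\Sigma(x,y)$ up to $\delta$, one pushes $\mu$ forward under the (measurably chosen) map $(x,y)\mapsto\theta_{x,y}$ to obtain a transport path measure $\eta\in\TPM(\mu_+,\mu_-)$ with $\urbPlTPM^{a}(\eta)\le\int d_\Sigma(x,y)\,\de\mu(x,y)+\delta\hdone$-type error; here one must be slightly careful that $a\hdone(\theta(I)\setminus\Sigma)+\hdone(\theta(I)\cap\Sigma)$ is bounded by the path length cost $a\hdone(\theta\setminus\Sigma)+b\hdone(\theta\cap\Sigma)$ with $b=1$, since the image $\theta(I)$ may be traversed multiply—this is where loop-freeness of the competitor curves matters, and one should take the $\theta_{x,y}$ injective (or at least loop-free), which is possible without increasing the cost. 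Letting $\delta\to0$ gives the bound, and measurable selection of $\theta_{x,y}$ is the first routine-but-nontrivial point (cite \cite[Cor.\,2.12]{BuPrSoSt09}).

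Next I would prove the reverse inequality $\urbPlTPM^{a}(\eta)\ge\Wd{d_\Sigma}(\mu_+,\mu_-)$ for every $\eta\in\TPM(\mu_+,\mu_-)$: push $\eta$ forward under $\theta\mapsto(\theta(0),\theta(1))=(p_0(\theta),p_1(\theta))$ to obtain a transport plan $\mu\in\Pi(\mu_+,\mu_-)$, and observe that for each $\theta$ one has $a\hdone(\theta(I)\setminus\Sigma)+\hdone(\theta(I)\cap\Sigma)\ge d_\Sigma(\theta(0),\theta(1))$ directly from the definition \eqref{eq:d_Sigma} of $d_\Sigma$ as an infimum over connecting curves (again using that the cost of the image is at most the cost of any parameterisation). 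Integrating against $\eta$ and using the definition \eqref{eq:WdSigma} of $\Wd{d_\Sigma}$ yields $\urbPlTPM^{a}(\eta)\ge\int d_\Sigma\,\de\mu\ge\Wd{d_\Sigma}(\mu_+,\mu_-)$. Combining the two inequalities gives $\inf_\eta\urbPlTPM^{a}(\eta)=\Wd{d_\Sigma}(\mu_+,\mu_-)$.

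For existence of a minimiser attaining this value, I would use the direct method: take a minimising sequence $\eta_n$; using that $\urbPlTPM^{a}(\eta_n)$ is bounded and $a>1$, the quantity $\hdone(\theta(I))$ is controlled $\eta_n$-on-average, so by a Chebyshev argument the mass of $\eta_n$ concentrated on paths of length exceeding $R$ (after arc-length reparameterisation, or after discarding loops) tends to $0$ uniformly in $n$; together with $\theta(0)\in\spt\mu_+$ compact, the compactness remark for $\Theta$ via Ascoli--Arzelà gives tightness of $(\eta_n)$ on $\Theta$, hence a weakly-$*$ convergent subsequence $\eta_n\weakstarto\eta$. The constraints $\pushforward{p_0}\eta=\mu_+$, $\pushforward{p_1}\eta=\mu_-$ pass to the limit by continuity of $p_0,p_1$, and lower semicontinuity of $\urbPlTPM^{a}$ (which, for fixed Borel $\Sigma$, follows from lower semicontinuity of $\theta\mapsto a\hdone(\theta(I)\setminus\Sigma)+\hdone(\theta(I)\cap\Sigma)$ under the metric $d_\Theta$, essentially as in \cite[Prop.\,2.14]{BuPrSoSt09}) gives $\urbPlTPM^{a}(\eta)\le\liminf\urbPlTPM^{a}(\eta_n)$, so $\eta$ is a minimiser. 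Finally, loop-freeness $\eta$-a.e.\ is obtained by the standard surgery: replacing each path by its loop-erased version does not increase $\hdone(\theta(I)\setminus\Sigma)$ or $\hdone(\theta(I)\cap\Sigma)$ (it only removes points) and does not change the endpoints, so applying this measurably to $\eta$ produces a loop-free minimiser; measurability of the loop-erasure operation is the last technical point, handled as in \cite[Cor.\,2.12]{BuPrSoSt09}.

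The main obstacle I anticipate is the interplay between the \emph{image} functional $a\hdone(\theta(I)\setminus\Sigma)+\hdone(\theta(I)\cap\Sigma)$ appearing in $\urbPlTPM^{a}$ and the \emph{parameterisation-based} length functional $a\hdone(\theta\setminus\Sigma)+\hdone(\theta\cap\Sigma)$ defining $d_\Sigma$: these agree only for injective (or loop-free, up to $\hdone$-null overlaps) curves, so the argument must consistently restrict to loop-free competitors on both sides and verify that doing so is cost-free. The lower semicontinuity of the image functional under $d_\Theta$—which quotients by reparameterisation—is the other delicate ingredient, but it is exactly the content cited from \cite{BuPrSoSt09}, so I would lean on that reference rather than reprove it.
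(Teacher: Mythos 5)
Your overall architecture is sound and genuinely different from the paper's in emphasis: you prove the identity $\min_\eta\urbPlTPM^{a}(\eta)=\Wd{d_\Sigma}(\mu_+,\mu_-)$ by two explicit inequalities (push a near-optimal plan forward along measurably selected near-optimal curves for $\leq$; push $\eta$ forward under $(p_0,p_1)$ and use $a\hdone(\theta(I)\setminus\Sigma)+\hdone(\theta(I)\cap\Sigma)\geq d_\Sigma(\theta(0),\theta(1))$ for $\geq$), and you obtain existence by the direct method. The paper instead outsources both existence and the identity to \cite[Cor.~2.12, Prop.~2.14, Prop.~2.18]{BuPrSoSt09} (via the observation that the image-based cost $\delta_\Sigma$ coincides with its relaxation) and spends its effort entirely on the loop-free refinement. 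Your two-inequality argument is correct and arguably more self-contained; your reverse inequality in particular is immediate because $d_\Sigma$ is itself defined through the image of the competitor curve.

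The gap is in your final step, the ``standard surgery'' producing a loop-free minimiser. Two things go wrong with treating this as routine. First, for the image functional $\delta_\Sigma(\theta)=a\hdone(\theta(I)\setminus\Sigma)+\hdone(\theta(I)\cap\Sigma)$, cutting out a loop does \emph{not} in general strictly decrease the cost --- the excised portion of the curve may revisit points of the image that are traversed elsewhere, so the image (and hence the cost) can stay the same. Consequently you cannot argue that minimisers are automatically loop-free, and the non-strict replacement you actually need is ``pass to an injective sub-arc of $\theta(I)$ joining $\theta(0)$ to $\theta(1)$'', which rests on arcwise connectedness of compact connected sets of finite $\hdone$-measure. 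Second, and more seriously, ``loop erasure'' of a continuous curve is not a canonical operation: there may be many arcs in $\theta(I)$ joining the endpoints, and you must select one measurably in $\theta$; \cite[Cor.~2.12]{BuPrSoSt09} gives a Borel selection of \emph{optimal paths between points}, not a measurable loop-erasure map on $\Theta$. The paper's proof is engineered precisely to avoid this: it introduces the parameterised cost $\tilde\delta_\Sigma(\theta)=\int_0^1\bigl(1+(a-1)\setchar{\R^n\setminus\Sigma}(\theta(t))\bigr)|\dot\theta(t)|\,\de t$, for which cutting a loop \emph{is} strictly decreasing (multiplicity is counted), so that the Borel-selected minimisers $q(x,y)$ of $\tilde\delta_\Sigma$ are automatically loop-free; it then checks that $\tilde\delta_\Sigma(q(x,y))=\min_{C_{x,y}}\delta_\Sigma(\theta)$ and sets $\eta=\pushforward{q}{\gamma}$ for an optimal plan $\gamma$. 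The cleanest repair of your step is exactly this: instead of loop-erasing an arbitrary minimiser, replace each $\theta$ in the support of your minimiser by $q(\theta(0),\theta(1))$, i.e.\ adopt the paper's construction.
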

\begin{proof}
Using the notation of \cite[Prop.\,2.14]{BuPrSoSt09}, let us set $\delta_\Sigma(\theta)=a\hdone(\theta(I)\setminus\Sigma) + \hdone(\theta(I)\cap\Sigma)$.
By \cite[Prop.\,2.18]{BuPrSoSt09} it is easy to see that $\delta_\Sigma$ equals its relaxation $\bar\delta_\Sigma$, so that
\begin{equation*}
\urbPlTPM^{a}(\tilde\eta)
=\int_\Theta\delta_\Sigma(\theta)\,\de\tilde\eta(\theta)
=\int_\Theta\bar\delta_\Sigma(\theta)\,\de\tilde\eta(\theta)
=:\overline{C_\Sigma^a}(\tilde\eta)
\end{equation*}
for all transport path measures $\tilde\eta$.
By \cite[Prop.\,2.14]{BuPrSoSt09}, $\overline{C_\Sigma^a}$ and thus also $\urbPlTPM^{a}$ possess a minimiser $\eta\in\TPM(\mu_+,\mu_-)$.

Next, employing the notation from Section\,\ref{sec:introUrbPlan}, for a transport plan $\gamma\in\Pi(\mu_+,\mu_-)$ define $I_\Sigma(\gamma)=\int_{\R^n\times\R^n}d_\Sigma(x,y)\,\de\gamma(x,y)$.
By \cite[Prop.\,2.14]{BuPrSoSt09}, there exist a (minimising) transport plan $\gamma$ and a (minimising) transport path measure $\eta$ such that
\begin{equation*}
\Wd{d_\Sigma}(\mu_+,\mu_-)
=I_\Sigma(\gamma)
=\overline{C_\Sigma^a}(\eta)
=\urbPlTPM^{a}(\eta)\,.
\end{equation*}

Finally, let us show that $\eta$ can be chosen such that $\eta$-a.\,e.\ path is loop-free.
To this end, let $\Omega\subset\R^n$ be a closed ball whose interior contains the support of $\mu_-$ and $\mu_+$ as well as $\Sigma$, and define
\begin{equation*}
\tilde\delta_\Sigma(\theta)=\int_0^1\left(1+(a-1)\setchar{\R^n\setminus\Sigma}(\theta(t))\right)|\dot\theta(t)|\,\de t\,.
\end{equation*}
Note that $\tilde\delta_\Sigma:\Theta\to[0,\infty)$ is lower semicontinuous.
Thus, by the same proof as for \cite[Cor.\,2.11]{BuPrSoSt09},
for any given $x,y\in\Omega$ a minimiser $\theta_{x,y}$ of $\tilde\delta_\Sigma$ exists in $C_{x,y} = \{\theta\in\Theta\ :\ \theta(0)=x,\theta(1)=y\}$ with $\theta_{x,y}(I)\subset\Omega$.
Therefore, by exactly the same proof as for \cite[Cor.\,2.12]{BuPrSoSt09} there is a Borel function $q:\Omega\times\Omega\to\Theta$ such that
\begin{equation*}
\tilde\delta_\Sigma(q(x,y))=\min_{\theta\in C_{x,y}}\tilde\delta_\Sigma(\theta)\,.
\end{equation*}
Now assume there are $x,y\in\Omega$ such that $q(x,y)$ has a loop, that is,
$q(x,y)(t_1)=q(x,y)(t_3)=z$ and $q(x,y)(t_2)\neq z$ for some $0\leq t_1<t_2<t_3\leq1$.
Then
\begin{equation*}
\tilde\delta_\Sigma(q(x,y))>\tilde\delta_\Sigma(\tilde\theta)
\quad\text{for}\quad
\tilde\theta(t)=\begin{cases}q(x,y)(t)&\text{if }t\in[0,t_1]\cup[t_3,1],\\z&\text{else,}\end{cases}
\end{equation*}
which contradicts the optimality of $q(x,y)$.
Thus, $q(x,y)$ is loop-free and therefore injective up to reparameterisation, and we have
\begin{multline*}
\tilde\delta_\Sigma(q(x,y))
=\int_0^1\left(1+(a-1)\setchar{\R^n\setminus\Sigma}(q(x,y)(t))\right)|\dot q(x,y)(t)|\,\de t\\
=\hdone(q(x,y)(I))+(a-1)\hdone(q(x,y)(I)\setminus\Sigma)
=\delta_\Sigma(q(x,y))\,.
\end{multline*}
If we can show $\tilde\delta_\Sigma(q(x,y))=\min_{\theta\in C_{x,y}}\delta_\Sigma(\theta)$,
then, letting $\gamma\in\Pi(\mu_+,\mu_-)$ be an optimal transport plan,
as in the proof of \cite[Prop.\,2.14]{BuPrSoSt09} it follows that $\eta=\pushforward{q}{\gamma}$ is an optimal transport path measure.

We close by proving $\tilde\delta_\Sigma(q(x,y))\leq\min_{\theta\in C_{x,y}}\delta_\Sigma(\theta)$ (the opposite inequality holds trivially).
By \cite[Cor.\,2.11]{BuPrSoSt09}, $\delta_\Sigma=\bar\delta_\Sigma$ possesses a minimiser $\hat\theta\in C_{x,y}$.
If $\hat\theta$ has a loop, that is, $\hat\theta(t_1)=\hat\theta(t_3)=z$ and $\hat\theta(t_2)\neq z$ for some $0\leq t_1<t_2<t_3\leq1$,
and if $\hat\theta(t_2)\notin\hat\theta(I\setminus[t_1,t_3])$, then
\begin{equation*}
\delta_\Sigma(\hat\theta)>\delta_\Sigma(\tilde\theta)
\quad\text{for}\quad
\tilde\theta(t)=\begin{cases}\hat\theta(t)&\text{if }t\in[0,t_1]\cup[t_3,1]\\z&\text{else,}\end{cases}
\end{equation*}
which contradicts the optimality of $\hat\theta$.
Thus, $\hat\theta(I)$ must be homeomorphic to $I$ and can be parameterised by an injective $\theta_{x,y}\in C_{x,y}$.
Therefore we have
\begin{equation*}
\min_{\theta\in C_{x,y}}\delta_\Sigma(\theta)=\delta_\Sigma(\hat\theta)=\delta_\Sigma(\theta_{x,y})=\tilde\delta_\Sigma(\theta_{x,y})\geq\tilde\delta_\Sigma(q(x,y))
\end{equation*}
as desired.
\end{proof}

Finally, let us consider the relation between discrete graphs and transport path measures.

\begin{definition}[Mass flux cycle]
Let $G$ be a discrete mass flux. A \emph{cycle} $C$ is a collection of directed edges $\{e_1,\ldots,e_k\} \subset E(G)$ such that $e_1\cup\ldots\cup e_k$ is homeomorphic to a circle with constant orientation.

The \emph{weight} of the cycle is defined as $w_G(C) = \min_{i=1,\ldots,k}w(e_i)$. Let $J_C^G=\{e\in C\ :\ w_G(e)=w_G(C)\}$ be the set of edges with minimal weight in $G$.

The \emph{$C$-reduced mass flux} $G_C$ is the discrete mass flux such that the set of its edges is $E(G_C)=E(G)\setminus J_C^G$ with weights $w_{G_C}(e)=w_G(e)-w_G(C)$ for $e\in C$ and $w_{G_C}(e)=w_G(e)$ else. Notice that $\dv G_C = \dv G$ so that initial and final measures are unchanged.

The \emph{cycle-reduced mass flux} is the mass flux $G$ reduced by all cycles.
\end{definition}

\begin{remark}
The cycle-reduced mass flux is well-defined since a discrete mass flux has at most finitely many cycles which can be reduced one-by-one.
In doing so, it is easy to see that the reduction order does not matter.
\end{remark}

\begin{lemma}[Cost of mass flux cycle]\label{lem:cycleCost}
Let $G$ be a discrete mass flux between $\mu_+,\mu_-\in\fbm(\R^n)$ and $\tilde G$ the cycle-reduced mass flux. Then
\begin{equation*}
\urbPlXia^{\varepsilon,a}(\tilde G)\leq\urbPlXia^{\varepsilon,a}(G)-\|\flux_{\tilde G}-\flux_G\|_\rca\,.
\end{equation*}
\end{lemma}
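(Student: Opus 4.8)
The plan is to reduce the cycles one at a time and show that each single cycle reduction decreases the cost by at least the amount of flux mass it removes; summing over all cycles then gives the claim, since $\|\flux_{\tilde G}-\flux_G\|_\rca \le \sum_C \|\flux_{G_{C}}-\flux_G\|_\rca$ telescopes appropriately (in fact, because the reductions are along disjoint-support modifications up to sign, we even get equality, but the inequality suffices). So it is enough to treat one cycle $C$ with weight $w_G(C) = m > 0$. First I would write the edge set of $C$ as $C = C^+ \cup C^-$ according to whether the edge orientation agrees with the chosen constant orientation of the circle. Reducing by $C$ replaces $w_G(e)$ by $w_G(e) - m$ for $e \in C^+$ and by $w_G(e) + m$ for $e \in C^-$ — wait, more precisely, since all edges of $C$ carry the constant orientation by definition, one should reorient as needed; I will just say that after reduction the weight on each $e \in C$ changes by $\pm m$ with at least one edge (those in $J_C^G$) dropping to weight $0$ and being deleted, while $\dv\flux$ is unchanged.

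The key computational step is the following pointwise estimate on $c^{\varepsilon,a}(w) = \min(aw, w+\varepsilon)$: for any $w \ge m \ge 0$,
\begin{equation*}
c^{\varepsilon,a}(w) - c^{\varepsilon,a}(w-m) \ge m \qquad\text{and}\qquad c^{\varepsilon,a}(w+m) - c^{\varepsilon,a}(w) \le am\,.
\end{equation*}
The first inequality holds because $c^{\varepsilon,a}$ is concave with $c^{\varepsilon,a}(0)=0$ and slope at least $1$ everywhere (the two linear pieces have slopes $a>1$ and $1$), so the difference quotient over $[w-m,w]$ is $\ge 1$; the second holds because the slope is at most $a$ everywhere. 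Applying the first inequality to each $e \in C^+$ (using $w_G(e) \ge m$, which holds since $m$ is the minimal weight in $C$) and multiplying by $l(e)$, and the second to each $e \in C^-$, we get
\begin{equation*}
\urbPlXia^{\varepsilon,a}(G) - \urbPlXia^{\varepsilon,a}(G_C)
= \sum_{e \in C^+}\bigl(c^{\varepsilon,a}(w_G(e)) - c^{\varepsilon,a}(w_G(e)-m)\bigr)l(e) - \sum_{e \in C^-}\bigl(c^{\varepsilon,a}(w_G(e)+m) - c^{\varepsilon,a}(w_G(e))\bigr)l(e)\,.
\end{equation*}
Hmm — this needs care about signs, and in fact the cleanest route is to note that since $C$ is a cycle with constant orientation, $\flux_C := \sum_{e \in C} w_G(e)\,\mu_e$ has $\dv \flux_C = 0$, and the reduction subtracts exactly the "uniform" cycle flux $m \sum_{e\in C}\mu_e$ of mass $m$ along each edge. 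Then $\|\flux_{G_C} - \flux_G\|_\rca = m \sum_{e \in C} l(e)$, and I need $\urbPlXia^{\varepsilon,a}(G) - \urbPlXia^{\varepsilon,a}(G_C) \ge m\sum_{e\in C}l(e)$, i.e. $c^{\varepsilon,a}(w_G(e)) - c^{\varepsilon,a}(w_G(e)-m) \ge m$ edge by edge — which is exactly the first displayed inequality above, valid since $w_G(e) \ge m$ for all $e \in C$. (Here I am using that after reorienting so all cycle edges point the same way, all weight changes are decreases; this is the content of the definition of "cycle with constant orientation.")

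Finally I would iterate: writing the passage from $G$ to $\tilde G$ as a finite sequence $G = G_0, G_1, \dots, G_N = \tilde G$ where $G_{j+1} = (G_j)_{C_{j+1}}$, each step satisfies $\urbPlXia^{\varepsilon,a}(G_j) - \urbPlXia^{\varepsilon,a}(G_{j+1}) \ge \|\flux_{G_{j+1}} - \flux_{G_j}\|_\rca$, and since at each step the flux strictly decreases on the cycle edges (all changes being decreases in magnitude of the multiplicity on a fixed rectifiable set), the total variations add: $\sum_j \|\flux_{G_{j+1}} - \flux_{G_j}\|_\rca \ge \|\flux_{\tilde G} - \flux_G\|_\rca$. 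Summing the per-step inequalities yields the assertion. The main obstacle is bookkeeping the orientations so that every cycle reduction is genuinely a decrease of the scalar multiplicity on each edge (so that the triangle inequality for $\|\cdot\|_\rca$ is saturated and the masses add rather than partially cancel); once the sign conventions from the definition of \emph{cycle} are pinned down, the analytic core is just the one-line concavity estimate on $c^{\varepsilon,a}$.
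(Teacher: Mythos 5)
Your proof is correct and follows essentially the same route as the paper's: reduce one cycle at a time, observe that $\|\flux_{G_C}-\flux_G\|_\rca = w_G(C)\sum_{e\in C}l(e)$, bound the cost drop edge by edge via $c^{\varepsilon,a}(w)-c^{\varepsilon,a}(w-w_0)\ge w_0$ (which the paper states without the slope-$\ge 1$ concavity argument you supply), and conclude by iterating over the finitely many cycles and using sub-/additivity of $\|\cdot\|_\rca$. The only difference is cosmetic: your initial $C^+/C^-$ detour is unnecessary since, as you yourself note, the constant-orientation requirement in the definition of a cycle means every edge weight simply decreases by $w_G(C)$.
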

\begin{proof}
Let $C$ be a cycle of $G$. We have
\begin{align*}
 \|\flux_{G_C}-\flux_G\|_\rca
  &= \sum_{e\in J_C^G} w_G(e)l(e) + \sum_{e\in E(G_C)} (w_G(e)-w_{G_C}(e))l(e)\\
  &= \sum_{e\in J_C^G} w_G(C)l(e) + \sum_{e\in E(G_C)\cap C} w_G(C)l(e) = w_G(C)\sum_{e\in C}l(e)\,,
\end{align*}
where $l(e)$ denotes the length of edge $e$.
Likewise, since $c^{\varepsilon,a} (w)-c^{\varepsilon,a}(w-w_0) \geq w_0$ for all $w\geq w_0\geq0$,
\begin{align*}
 \urbPlXia^{\varepsilon,a}(G)&-\urbPlXia^{\varepsilon,a}(G_C)\\
 &= \sum_{e\in J_C^G} c^{\varepsilon,a}(w_G(e))l(e) + \sum_{e\in E(G_C)}(c^{\varepsilon,a} (w_G(e))-c^{\varepsilon,a}(w_{G_C}(e)))l(e)\\
 &= \sum_{e\in J_C^G} c^{\varepsilon,a}(w_G(C))l(e) + \sum_{e\in E(G_C)\cap C}(c^{\varepsilon,a} (w_G(e))-c^{\varepsilon,a}(w_G(e)-w_G(C)))l(e)\\
 &\geq w_G(C)\sum_{e\in C}l(e) = \|\flux_{G_C}-\flux_G\|_\rca\,.
\end{align*}
The result now follows repeating this procedure over all cycles and using the additivity of $\|\cdot\|_\rca$ with respect to cycle removal.
\end{proof}

\begin{remark}\label{rem:fluxDecomp}
By \cite[Thm.\,3.5 and Prop.\,3.6]{Ahuja-Magnanti-Orlin} or \cite[Thm.\,1]{Gauthier-Desrosiers-Luebbecke}, a discrete mass flux between $\mu_+,\mu_-\in\fbm(\R^n)$ without cycles can be identified with a transport path measure $\eta$ moving $\mu_+$ to $\mu_-$.
\end{remark}

\newpage
\subsection{Equivalence of the pattern- and set-based formulation}\label{sec:EquPatternSet}

The combination of the following two propositions proves the existence of an optimal pattern $\chi$ and
\begin{displaymath}
\min_\chi\urbPlEn^{\varepsilon,a,\mu_+,\mu_-}[\chi]=\min_\Sigma\urbPlEn^{\varepsilon,a,\mu_+,\mu_-}[\Sigma]
\end{displaymath}
as well as relation\,\eqref{eqn:OptSigmaIdentif} for at least one pair of minimisers, as detailed in Remark\,\ref{rem:existenceOptPatternUrbPl}.

\begin{proposition}\label{prop:constructPatternFromSet}
For any $\Sigma\subset\R^n$ there exists an irrigation pattern $\chi$ such that
\begin{displaymath}
\urbPlEn^{\varepsilon,a,\mu_+,\mu_-}[\chi] \leq \urbPlEn^{\varepsilon,a,\mu_+,\mu_-}[\Sigma].
\end{displaymath}
\end{proposition}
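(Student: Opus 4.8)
The plan is to realise a minimising transport plan for $\Wd{d_\Sigma}$ by an irrigation pattern and to bound its pattern cost by $\urbPlTPM^{a}$ plus the network penalty $\varepsilon\hdone(\Sigma)$ (recall that throughout this section $\mu_+,\mu_-$ have equal mass and bounded support). First I would dispose of the trivial case: if $\Sigma$ is not rectifiable then $\urbPlEn^{\varepsilon,a,\mu_+,\mu_-}[\Sigma]=\infty$, so any irrigation pattern with $\mu_+^\chi=\mu_+$ and $\mu_-^\chi=\mu_-$ — one exists by Remark\,\ref{rem:existenceUrbPlFiniteCostPattern} — works. Hence assume $\Sigma$ rectifiable with $\hdone(\Sigma)<\infty$. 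Next I would reduce to bounded $\Sigma$: intersecting $\Sigma$ with a sufficiently large closed ball can only decrease the penalty $\varepsilon\hdone(\cdot)$ and does not change $\Wd{d_\Sigma}(\mu_+,\mu_-)$, because $d_\Sigma(x,y)\le a|x-y|$ and, for a loop-free path $\theta$, the cost $a\hdone(\theta(I)\setminus\Sigma)+\hdone(\theta(I)\cap\Sigma)$ dominates the length of $\theta$, so the relevant $d_\Sigma$-geodesics between points of $\spt\mu_+\cup\spt\mu_-$ stay within a fixed ball of radius $O\bigl(a\,\diam(\spt\mu_+\cup\spt\mu_-)\bigr)$.

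With $\Sigma$ bounded I would invoke Proposition\,\ref{prop:OptTPM} to obtain a minimiser $\eta\in\TPM(\mu_+,\mu_-)$ of $\urbPlTPM^{a}$ with $\urbPlTPM^{a}(\eta)=\Wd{d_\Sigma}(\mu_+,\mu_-)$ and $\eta$-a.e.\ path loop-free, and then take a parameterisation $\chi:\reSpace\to\Theta$ of $\eta$ (Skorokhod's theorem). Then $\chi$ is an irrigation pattern with $\reMeasure$-a.e.\ fibre loop-free, and $\mu_+^\chi=\pushforward{p_0}{\eta}=\mu_+$, $\mu_-^\chi=\pushforward{p_1}{\eta}=\mu_-$, so $\urbPlEn^{\varepsilon,a,\mu_+,\mu_-}[\chi]=\urbPlMMS^{\varepsilon,a}(\chi)$. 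Since each loop-free $\chi_p$ is injective up to reparameterisation, one has $\int_I g(\chi_p(t))|\dot\chi_p(t)|\,\de t=\int_{\chi_p(I)}g\,\de\hdone$ for every nonnegative Borel $g$; integrating in $p$ and applying Fubini — legitimate because $(p,x)\mapsto\setchar{\chi_p(I)}(x)$ is universally measurable, $\reMeasure([x]_\chi)=m_\chi(x)$, and the integrals used below live on $\Sigma$, which has finite $\hdone$ — yields the Eulerian--Lagrangian identity
\begin{equation*}
\int_\reSpace\int_I g(\chi_p(t))\,|\dot\chi_p(t)|\,\de t\,\de\reMeasure(p)=\int_{\R^n}g(x)\,m_\chi(x)\,\de\hdone(x).
\end{equation*}

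Finally, the key estimate comes from splitting the density $\repsilonachi$: off $\Sigma$ one has $\repsilonachi\le a$; on $\Sigma\cap\{m_\chi>0\}$ one has $\repsilonachi(x)=\min\!\bigl(1+\tfrac{\varepsilon}{m_\chi(x)},a\bigr)\le 1+\tfrac{\varepsilon}{m_\chi(x)}$; and $\{m_\chi=0\}$ carries no fibre mass. Applying the identity with $g=\setchar{\R^n\setminus\Sigma}$, with $g=\setchar{\Sigma}$, and with $g=\setchar{\Sigma\cap\{m_\chi>0\}}/m_\chi$, I obtain
\begin{align*}
\urbPlMMS^{\varepsilon,a}(\chi)
&\le\int_\reSpace\bigl(a\hdone(\chi_p(I)\setminus\Sigma)+\hdone(\chi_p(I)\cap\Sigma)\bigr)\,\de\reMeasure(p)+\varepsilon\,\hdone(\Sigma\cap\{m_\chi>0\})\\
&\le\int_\Theta\bigl(a\hdone(\theta(I)\setminus\Sigma)+\hdone(\theta(I)\cap\Sigma)\bigr)\,\de\eta(\theta)+\varepsilon\,\hdone(\Sigma)\\
&=\urbPlTPM^{a}(\eta)+\varepsilon\,\hdone(\Sigma)=\Wd{d_\Sigma}(\mu_+,\mu_-)+\varepsilon\,\hdone(\Sigma)=\urbPlEn^{\varepsilon,a,\mu_+,\mu_-}[\Sigma],
\end{align*}
which is the assertion. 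I expect the main obstacle to be making the Eulerian--Lagrangian identity rigorous in adequate generality (joint measurability of $(p,x)\mapsto\setchar{\chi_p(I)}(x)$, the area formula for loop-free fibres, and moving $\reMeasure([x]_\chi)=m_\chi(x)$ inside the integral) and, to a lesser extent, the geometric lemma behind the reduction to bounded $\Sigma$; once these are settled, handling the mass-dependent density $\repsilonachi$ via the identity is routine.
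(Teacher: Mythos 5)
Your proof is correct and follows essentially the same route as the paper's: an optimal loop-free transport path measure from Proposition\,\ref{prop:OptTPM}, a parameterisation $\chi$ of it, and the area-formula/Fubini identity converting fibre integrals into $\int g\,m_\chi\,\de\hdone$, followed by the same splitting of $\repsilonachi$ according to $\Sigma$ and $\{m_\chi>0\}$. Your explicit treatment of the non-rectifiable case and the reduction to bounded $\Sigma$ (needed to invoke Proposition\,\ref{prop:OptTPM}) are details the paper leaves implicit.
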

\begin{proof}
%By \cite[Sec.\,4.4 and Thm.\,4.26]{BuPrSoSt09} there exists a minimising set $\Sigma\subset\R^n$ of $\urbPlEn^{\varepsilon,a,\mu_+,\mu_-}[\Sigma]$.
By Proposition\,\ref{prop:OptTPM} there exists an optimal transport path measure $\eta \in \TPM(\mu_+,\mu_-)$
such that $\eta$-a.\,e. path $\theta\in\Theta$ is loop-free and
\begin{displaymath}
 \Wd{d_\Sigma}(\mu_+,\mu_-) = \int_\Theta a \hdone(\theta(I)\setminus \Sigma) + \hdone(\theta(I)\cap \Sigma) \,\de \eta(\theta)\,.
\end{displaymath}
Let $\chi$ be a parameterisation of the optimal transport path measure $\eta$ so that $\eta = \pushforward{\chi}{\reMeasure}$.

First we derive the formula
\begin{displaymath}
 \hdone(\tilde\Sigma) = \int_\reSpace \int_{\{t \in I \ : \ \chi(p,t) \in \tilde\Sigma\}} \frac{1}{m_\chi(\chi(p,t))}|\dot\chi(p,t)| \,\de t \,\de \reMeasure(p)\,,
\end{displaymath}
where we introduced $\tilde\Sigma \subseteq \Sigma$ as
\begin{displaymath}
 \tilde\Sigma = \{x \in \Sigma \ : \ m_\chi(x) > 0\}\,.
\end{displaymath}
Since $\chi$ is loop-free (thanks to Proposition \ref{prop:OptTPM}) and thus $\chi_p$ is injective (up to reparameterisation) for a.\,e.\ $p\in\reSpace$, we have
\begin{multline*}
 \int_{\{t\in I\,:\,\chi(p,t) \in \tilde\Sigma\}} \frac{1}{m_\chi(\chi(p,t))}|\dot\chi(p,t)|\,\de t\\
 = \int_{\chi_p(I) \cap \tilde\Sigma} \frac{1}{m_\chi(x)} \,\de\hdone(x)
 = \int_{\R^n} \frac{1}{m_\chi(x)} \setchar{\chi_p(I) \cap \tilde\Sigma}(x) \,\de \hdone(x)\,.
\end{multline*}
Using this as well as the identity $\frac{1}{m_\chi(x)} \int_\reSpace \setchar{\chi_p(I) \cap \tilde\Sigma}(x) \,\de \reMeasure(p) = \setchar{\tilde\Sigma}(x)$\footnote{$\frac{1}{m_\chi(x)} \int_\reSpace \setchar{\chi_p(I) \cap \tilde\Sigma}(x) \,\de \reMeasure(p) = \frac{\reMeasure(\{p \ : \ x \in \chi_p(I)\cap\tilde\Sigma\})}{\reMeasure(\{p \ : \ x \in \chi_p(I)\})} = \setchar{\tilde\Sigma}(x)$},
we obtain
\begin{multline*}
 \int_\reSpace \int_{\{t \in I \ : \ \chi(p,t) \in \tilde\Sigma\}} \frac{1}{m_\chi(\chi(p,t))}|\dot\chi(p,t)| \,\de t \,\de \reMeasure(p)\\
 = \int_\reSpace \int_{\R^n} \frac{1}{m_\chi(x)} \setchar{\chi_p(I) \cap \tilde\Sigma}(x) \,\de \hdone(x) \,\de \reMeasure(p) \\
 = \int_{\R^n} \int_\reSpace \frac{1}{m_\chi(x)} \setchar{\chi_p(I) \cap \tilde\Sigma}(x) \,\de \reMeasure(p) \,\de \hdone(x) = \hdone(\tilde\Sigma)
\end{multline*}
after application of the Fubini--Tonelli Theorem.
Next we notice that
\begin{displaymath}
 \hdone(\chi_p(I)\setminus\Sigma) = \int_{\{\chi_p(t) \notin \Sigma\}} |\dot\chi_p(t)|\,\de t\,, \qquad \hdone(\chi_p(I)\cap\Sigma) = \int_{\{\chi_p(t) \in \Sigma\}} |\dot\chi_p(t)|\,\de t
\end{displaymath}
for a.\,e.\ $p\in\reSpace$ due to the injectivity of $\chi_p$
so that in summary, the urban planning cost can be estimated as
\begin{multline}\label{eqn:urbPlCostEstimate}
%%% 1
 \urbPlEn^{\varepsilon,a,\mu_+,\mu_-}[\Sigma]
%%% 2
 = \int_\Theta a \hdone(\theta(I)\setminus \Sigma) + \hdone(\theta(I) \cap \Sigma) \,\de \eta(\theta) + \varepsilon\hdone(\Sigma) \\
%%% 3
 = \int_\reSpace a \hdone(\chi_p(I)\setminus \Sigma) + \hdone(\chi_p(I) \cap \Sigma) \,\de \reMeasure(p) + \varepsilon\hdone(\Sigma) \\
%%% 4
 \geq \int_\reSpace a \hdone(\chi_p(I)\setminus \Sigma) + \hdone(\chi_p(I) \cap \Sigma) \,\de \reMeasure(p) + \varepsilon\hdone(\tilde\Sigma) \\
%%% 5
 = \int_\reSpace a\int_{\{\chi_p(t) \notin \Sigma\}} |\dot\chi_p(t)|\,\de t + \int_{\{\chi_p(t)\in \Sigma \setminus \tilde\Sigma\}} |\dot\chi_p(t)|\,\de t + \int_{\{\chi_p(t)\in \tilde\Sigma\}} |\dot\chi_p(t)|\,\de t \\
%%% 5
 + \int_{\{\chi_p(t)\in \tilde\Sigma\}} \frac{\varepsilon}{m_\chi(\chi_p(t))}|\dot\chi_p(t)|\,\de t \,\de \reMeasure(p) \\
%%% 6
 = \int_\reSpace a\int_{\{\chi_p(t) \notin \tilde\Sigma\}} |\dot\chi_p(t)|\,\de t + \int_{\{\chi_p(t)\in \tilde\Sigma\}} \left(1+\frac{\varepsilon}{m_\chi(\chi_p(t))}\right)|\dot\chi_p(t)|\,\de t \\
%%% 7
 + \int_{\{\chi_p(t)\in \Sigma\setminus\tilde\Sigma\}} |\dot\chi_p(t)|\,\de t - a\int_{\{\chi_p(t) \in \Sigma\setminus\tilde\Sigma\}} |\dot\chi_p(t)|\,\de t\,\de \reMeasure(p) \\
%%% 8
%   \geq \int_\reSpace \int_{\{\chi_p(t) \notin \Sigma\}} \repsilonachi(\chi_p(t)) |\dot\chi_p(t)|\,\de t + \int_{\{\chi_p(t)\in \tilde\Sigma\}} \repsilonachi(\chi_p(t))|\dot\chi_p(t)|\,\de t \\
%%% 9
%  + \int_{\{\chi_p(t)\in \Sigma\setminus\tilde\Sigma\}} |\dot\chi_p(t)|\,\de t\,\de \reMeasure(p) \\
%%% 10
 \geq \int_{\reSpace \times I} \repsilonachi(\chi_p(t))|\dot\chi_p(t)|\,\de \reMeasure(p) \,\de t + \int_{\Gamma} \int_{\{\chi_p(t)\in \Sigma\setminus\tilde\Sigma\}} (1-a)|\dot\chi_p(t)|\,\de t\,\de \reMeasure(p) \\
%%% 11
 \geq \int_{\reSpace \times I} \repsilonachi(\chi_p(t))|\dot\chi_p(t)|\,\de \reMeasure(p) \,\de t = \urbPlEn^{\varepsilon,a,\mu_+,\mu_-}[\chi]\,.
\end{multline}
Thus the pattern $\chi$ satisfies
\begin{displaymath}
 \urbPlEn^{\varepsilon,a,\mu_+,\mu_-}[\chi] \leq \urbPlEn^{\varepsilon,a,\mu_+,\mu_-}[\Sigma] = \min_\Sigma\urbPlEn^{\varepsilon,a,\mu_+,\mu_-}[\Sigma]\,,
\end{displaymath}
concluding the proof.
\end{proof}

\begin{proposition}\label{prop:constructSetFromPattern}
For any irrigation pattern $\chi$, the rectifiable set $\Sigma = \{x \in \R^n \ : \ m_\chi(x) > \tfrac{\varepsilon}{a-1}\}$ satisfies
\begin{displaymath}
\urbPlEn^{\varepsilon,a,\mu_+,\mu_-}[\chi]\geq\urbPlEn^{\varepsilon,a,\mu_+,\mu_-}[\Sigma]\,.
\end{displaymath}
\end{proposition}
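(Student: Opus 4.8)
The plan is as follows. If $\urbPlEn^{\varepsilon,a,\mu_+,\mu_-}[\chi]=\infty$ there is nothing to prove, so I assume $\chi$ has finite cost; then $\mu_+^\chi=\mu_+$, $\mu_-^\chi=\mu_-$, and, since $\repsilonachi\geq 1$, the total fibre length $L=\int_{\reSpace\times I}|\dot\chi_p(t)|\,\de t\,\de\reMeasure(p)$ is bounded by $\urbPlEn^{\varepsilon,a,\mu_+,\mu_-}[\chi]$. By Proposition\,\ref{thm:constSpeedPatternsUrbPl} I reparameterise so that $\chi_p\in\Lip(I)$ for a.e.\ $p$; this changes neither the fibre images $\chi_p(I)$ (hence neither $m_\chi$ nor $\Sigma$), nor the marginals, nor the cost. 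I write $\gamma=(i_0^\chi,i_1^\chi)_\#\reMeasure\in\Pi(\mu_+,\mu_-)$ for the induced transport plan and $\mu=\int_\reSpace\hdone\restr\chi_p(I)\,\de\reMeasure(p)$ for the occupation measure, which is finite with $\mu(\R^n)\leq L$, satisfies $\mu(A)=\int_\reSpace\hdone(\chi_p(I)\cap A)\,\de\reMeasure(p)$, and — by Tonelli and $\int_\reSpace\setchar{\chi_p(I)}(x)\,\de\reMeasure(p)=\reMeasure([x]_\chi)=m_\chi(x)$ — obeys $\mu(A)=\int_A m_\chi\,\de\hdone$ whenever $A$ is $1$-rectifiable with $\hdone(A)<\infty$.

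The crux of the proof is to show that $\Sigma=\{x:m_\chi(x)>\tfrac\varepsilon{a-1}\}$ is $1$-rectifiable with $\hdone(\Sigma)<\infty$. Set $c=\tfrac\varepsilon{a-1}>0$. After discarding the atoms of $\mu_+$ and $\mu_-$ (a countable, hence negligible and rectifiable, subset of $\Sigma$), the key is a lower density estimate: for $x\in\Sigma$ that is not such an atom, $\reMeasure$-mass $m_\chi(x)>c$ of fibres pass through $x$, and since $\reMeasure(\{p:x\in\chi_p(I),\ \chi_p(I)\subseteq B(x,r)\})\to\reMeasure(\{p:\chi_p\equiv x\})=0$ as $r\to0$, for $r$ small one gets $\mu(B(x,r))\geq\tfrac c2\,\hdone(\chi_p(I)\cap B(x,r))\gtrsim c\,r$ by restricting to fibres through $x$ not contained in $B(x,r)$ and using that a continuum meeting the centre and the complement of a ball has length at least the radius. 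The standard Vitali density comparison then yields $\hdone(\Sigma)\lesssim\mu(\R^n)/c\leq CL(a-1)/\varepsilon<\infty$. With $\hdone(\Sigma)<\infty$ in hand I decompose $\Sigma=R\cup U$ into rectifiable and purely unrectifiable parts; each rectifiable curve $\chi_p(I)$ meets $U$ in an $\hdone$-null set, so $\mu(U)=0$, while the Tonelli identity gives $\mu(U)=\int_U m_\chi\,\de\hdone\geq c\,\hdone(U)$, forcing $\hdone(U)=0$; hence $\Sigma$ is $1$-rectifiable. (Alternatively one may invoke a variant of \cite[Lem.\,6.3]{Bernot-Caselles-Morel-Traffic-Plans}.)

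For the energy estimate I use the pointwise identity $\repsilonachi(y)=\bigl(1+(a-1)\setchar{\R^n\setminus\Sigma}(y)\bigr)+\tfrac\varepsilon{m_\chi(y)}\setchar{\Sigma}(y)$, valid by checking the cases $m_\chi(y)=0$, $0<m_\chi(y)\leq c$, $m_\chi(y)>c$ — this is precisely where the threshold $c=\tfrac\varepsilon{a-1}$ is forced, since $1+\tfrac\varepsilon{m_\chi(y)}\leq a$ exactly on $\Sigma$. Splitting accordingly, $\urbPlEn^{\varepsilon,a,\mu_+,\mu_-}[\chi]=T_1+T_2$ with $T_1=\int_{\reSpace\times I}\bigl(1+(a-1)\setchar{\R^n\setminus\Sigma}(\chi_p(t))\bigr)|\dot\chi_p(t)|\,\de\reMeasure(p)\,\de t$ and $T_2=\varepsilon\int_{\reSpace\times I}\tfrac{\setchar{\Sigma}(\chi_p(t))}{m_\chi(\chi_p(t))}|\dot\chi_p(t)|\,\de\reMeasure(p)\,\de t$. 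For $T_1$, fibrewise $\int_I\bigl(1+(a-1)\setchar{\R^n\setminus\Sigma}(\chi_p(t))\bigr)|\dot\chi_p(t)|\,\de t\geq a\hdone(\chi_p(I)\setminus\Sigma)+\hdone(\chi_p(I)\cap\Sigma)\geq d_\Sigma(\chi_p(0),\chi_p(1))$ (the image has length at most the path, and $\chi_p\in C_{\chi_p(0),\chi_p(1)}$), so integrating in $p$ and using $\gamma\in\Pi(\mu_+,\mu_-)$ gives $T_1\geq\Wd{d_\Sigma}(\mu_+,\mu_-)$. For $T_2$, the area formula for Lipschitz curves (Banach indicatrix $\geq1$ on the image) gives $\int_I\tfrac{\setchar{\Sigma}(\chi_p(t))}{m_\chi(\chi_p(t))}|\dot\chi_p(t)|\,\de t\geq\int_{\chi_p(I)\cap\Sigma}\tfrac1{m_\chi}\,\de\hdone$; integrating in $p$, using Tonelli on the now-rectifiable $\Sigma$ (where $m_\chi>0$) and the identity $\int_\reSpace\setchar{\chi_p(I)}\,\de\reMeasure(p)=m_\chi$, this yields $T_2\geq\varepsilon\int_\Sigma\tfrac{m_\chi}{m_\chi}\,\de\hdone=\varepsilon\hdone(\Sigma)$. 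Adding the two bounds, $\urbPlEn^{\varepsilon,a,\mu_+,\mu_-}[\chi]\geq\Wd{d_\Sigma}(\mu_+,\mu_-)+\varepsilon\hdone(\Sigma)=\urbPlEn^{\varepsilon,a,\mu_+,\mu_-}[\Sigma]$, using rectifiability of $\Sigma$ for the last equality. The only real obstacle is the rectifiability and finiteness of $\Sigma$ in the second step; the rest is a routine splitting together with the area formula, Tonelli, and the definition of $d_\Sigma$.
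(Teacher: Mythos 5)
Your proof is correct, and its core is the same splitting as the paper's: decompose $\repsilonachi$ into a ``travel cost'' part equal to $a$ off $\Sigma$ and $1$ on $\Sigma$, which after integration dominates $\Wd{d_\Sigma}(\mu_+,\mu_-)$, plus the part $\tfrac{\varepsilon}{m_\chi}\setchar{\Sigma}$, which via the area formula and Fubini--Tonelli dominates $\varepsilon\hdone(\Sigma)$. Two differences are worth noting. First, for the Wasserstein bound you argue fibrewise, $a\hdone(\chi_p(I)\setminus\Sigma)+\hdone(\chi_p(I)\cap\Sigma)\geq d_\Sigma(\chi_p(0),\chi_p(1))$, and then integrate against the plan $(i_0^\chi,i_1^\chi)_\#\reMeasure\in\Pi(\mu_+,\mu_-)$; the paper instead passes to the transport path measure $\eta=\chi_\#\reMeasure$ and invokes Proposition~\ref{prop:OptTPM}. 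Both work; yours is more self-contained since it needs only the definitions of $d_\Sigma$ and $\Wd{d_\Sigma}$. Second, and more substantially, you prove that $\Sigma=\{m_\chi>\tfrac{\varepsilon}{a-1}\}$ is $1$-rectifiable with $\hdone(\Sigma)<\infty$ (lower density estimate from the occupation measure, then killing the purely unrectifiable part). The paper asserts rectifiability in the statement but its proof does not verify it, even though it is needed both to legitimise the Tonelli swap against $\hdone\restr\Sigma$ and to get $\urbPlEn^{\varepsilon,a,\mu_+,\mu_-}[\Sigma]=\Wd{d_\Sigma}(\mu_+,\mu_-)+\varepsilon\hdone(\Sigma)$ rather than $+\infty$ from Definition~\ref{def:urban_plannning_energy}; so this is a genuine strengthening of the written argument (the minor inconsistency that you state the identity $\mu(A)=\int_A m_\chi\,\de\hdone$ only for rectifiable $A$ but then apply it to the purely unrectifiable part $U$ is harmless, since the Tonelli argument needs only that $A$ is Borel with $\hdone(A)<\infty$).
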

\begin{proof}
Let $\chi$ be an irrigation pattern. Let us define $\eta = \pushforward{\chi}{\reMeasure}$ ($\chi$ here is a map $\reSpace \to \Theta$).
By definition of $\Sigma$, we have
\begin{displaymath}
 \repsilonachi(x) = \begin{cases}
                              1 + \frac{\varepsilon}{m_\chi(x)} & \text{ if } x \in \Sigma,\\
                              a & \text{ otherwise}
                             \end{cases}
\end{displaymath}
and thus
\begin{align*}
 \int_I \repsilonachi(\chi_p(t))|\dot\chi_p(t)|\,\de t
 = \int_{\{\chi_p(t) \in \Sigma\}} \left(1+\tfrac{\varepsilon}{m_\chi(\chi(p,t))}\right)|\dot\chi(p,t)|\,\de t + \int_{\{\chi_p(t) \notin \Sigma\}} a|\dot\chi(p,t)|\,\de t\,.
\end{align*}
First, notice that
\begin{multline*}
 \int_\reSpace \left(\int_{\{\chi_p(t) \in \Sigma\}} |\dot\chi(p,t)|\,\de t + \int_{\{\chi_p(t) \notin \Sigma\}} a|\dot\chi(p,t)|\,\de t\right)\,\de \reMeasure(p) \\
 \geq \int_\Theta a \hdone(\theta(I)\setminus \Sigma) + \hdone(\theta(I) \cap \Sigma) \,\de \eta(\theta) \geq \Wd{d_\Sigma}(\mu_+,\mu_-)\,
\end{multline*}
by Proposition\,\ref{prop:OptTPM}. Furthermore,
\begin{multline*}
 \int_\reSpace \int_{\{\chi_p(t) \in \Sigma\}} \frac{|\dot\chi_p(t)|}{m_\chi(\chi_p(t))}\,\de t\,\de \reMeasure(p)
 \geq \int_\reSpace \int_{\chi_p(I) \cap \Sigma} \frac{1}{m_\chi(x)} \,\de \hdone(x) \,\de \reMeasure(p) \\
 = \int_\Sigma \int_{\{p \in \reSpace \ : \ x \in \chi_p(I)\}} \frac{1}{m_\chi(x)} \,\de \reMeasure(p) \,\de \hdone(x) 
 = \int_\Sigma \frac{m_\chi(x)}{m_\chi(x)} \,\de \hdone(x)
 = \hdone(\Sigma)
\end{multline*}
so that
\begin{multline*}
 \urbPlEn^{\varepsilon,a,\mu_+,\mu_-}[\chi]
 =\int_\reSpace\int_I \repsilonachi(\chi_p(t))|\dot\chi_p(t)|\,\de t \,\de\reMeasure(p)\\
 \geq \Wd{d_\Sigma}(\mu_+,\mu_-)+\varepsilon\hdone(\Sigma)
 = \urbPlEn^{\varepsilon,a,\mu_+,\mu_-}[\Sigma]\,,
\end{multline*}
concluding the proof.
\end{proof}

\begin{remark}\label{rem:existenceOptPatternUrbPl}
Due to the constructive nature of the above proofs, the existence of an optimal $\Sigma$ by \cite[Sec.\,4.4 and Thm.\,4.26]{BuPrSoSt09} implies the existence of an optimal irrigation pattern $\chi$.
Indeed, let $\chi_\Sigma$ be the pattern constructed in Proposition\,\ref{prop:constructPatternFromSet} from an optimal $\Sigma$, then
\begin{multline*}
 \inf_\chi \urbPlEn^{\varepsilon,a,\mu_+,\mu_-}[\chi]
 \leq \urbPlEn^{\varepsilon,a,\mu_+,\mu_-}[\chi_\Sigma]
 \leq \min_{\Sigma} \urbPlEn^{\varepsilon,a,\mu_+,\mu_-}[\Sigma]
 \leq \inf_\chi \urbPlEn^{\varepsilon,a,\mu_+,\mu_-}[\chi]\,,
\end{multline*}
where the last step follows from Proposition\,\ref{prop:constructSetFromPattern}.
Thus, all inequalities are equalities, and $\chi_\Sigma$ is an optimal pattern.
Furthermore, by Proposition\,\ref{prop:constructSetFromPattern}, for any optimal $\chi$ there is an optimal $\Sigma$ satisfying \eqref{eqn:OptSigmaIdentif}.
Note that there are also optimal $\Sigma$ that do not satisfy \eqref{eqn:OptSigmaIdentif} for any optimal irrigation pattern $\chi$,
e.\,g.\ it is easy to see that $\Sigma=\{x \in \R^n \ : \ m_\chi(x) \geq \tfrac{\varepsilon}{a-1}\}$ is such an example.
However, any optimal $\Sigma$ satisfies $$\{x \in \R^n \ : \ m_\chi(x) > \tfrac{\varepsilon}{a-1}\}\subset\Sigma\subset\{x \in \R^n \ : \ m_\chi(x) \geq \tfrac{\varepsilon}{a-1}\}$$ for some optimal $\chi$,
since for an optimal $\Sigma$ the left-hand side and right-hand side in \eqref{eqn:urbPlCostEstimate} must coincide and thus all inequalities must be equalities.
\end{remark}

We end this section proving Proposition \ref{prop:single_path_property_for_the_urban_planning_problem}.

\begin{proof}[Proof of Proposition \ref{prop:single_path_property_for_the_urban_planning_problem}]
The construction of the optimal $\chi$ from an optimal $\Sigma$ in the proof of Proposition\,\ref{prop:constructPatternFromSet} is loop-free.
Furthermore, it may be chosen to have the single path property, since with $\Sigma$ fixed, if there are two paths $\theta_1$ and $\theta_2$ passing through $x$ and $y$,
then without changing the energy $\theta_2$ may be redirected to have the same path as $\theta_1$ between $x$ and $y$.
\end{proof}

\subsection{Equivalence of flux- and pattern-based formulation}

Propositions\,\ref{prop:constructFluxFromPattern} and \ref{prop:constructPatternFromFlux} together prove
\begin{displaymath}
\min_\chi\urbPlEn^{\varepsilon,a,\mu_+,\mu_-}[\chi]=\min_\flux\urbPlEn^{\varepsilon,a,\mu_+,\mu_-}[\flux]
\end{displaymath}
as well as the relation\,\eqref{eqn:OptFluxIdentif} for two minimisers.

\begin{proposition}\label{prop:constructFluxFromPattern}
There exists a mass flux $\flux$ with
\begin{displaymath}
\min_{\chi}\urbPlEn^{\varepsilon,a,\mu_+,\mu_-}[\chi] \geq \urbPlEn^{\varepsilon,a,\mu_+,\mu_-}[\flux]\,.
\end{displaymath}
\end{proposition}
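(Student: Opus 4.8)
The plan is to take an optimal irrigation pattern $\chi$ (which exists by Remark\,\ref{rem:existenceOptPatternUrbPl}), assume after Proposition\,\ref{thm:constSpeedPatternsUrbPl} that its fibres are Lipschitz with constant speed, and build from it a mass flux $\flux$ whose cost does not exceed $\urbPlMMS^{\varepsilon,a}(\chi)$. The natural candidate is the vector measure defined by duality against continuous vector fields,
\begin{equation*}
\int_{\R^n}\varphi\cdot\de\flux=\int_\reSpace\int_I\varphi(\chi_p(t))\cdot\dot\chi_p(t)\,\de t\,\de\reMeasure(p)\qquad\text{for all }\varphi\in\cont_c(\R^n;\R^n)\,,
\end{equation*}
which is exactly relation \eqref{eqn:OptFluxIdentif}. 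One first checks that this defines an element of $\rca(\R^n;\R^n)$ (the right-hand side is a bounded linear functional on $\cont_c$ since its norm is controlled by $\int_\reSpace\int_I|\dot\chi_p(t)|\,\de t\,\de\reMeasure(p)<\infty$, using finiteness of the cost and $\repsilonachi\geq1$), and that $\dv\flux=\mu_+-\mu_-$ by testing with gradients $\varphi=\nabla\psi$ and using $\chi_p(0)=i_0^\chi(p)$, $\chi_p(1)=i_1^\chi(p)$ together with $\mu_\pm^\chi=\mu_\pm$.

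The heart of the proof is to exhibit an approximating graph sequence $(\mu_+^k,\mu_-^k,\flux_{G_k})\weakstarto(\mu_+,\mu_-,\flux)$ with $\liminf_k\urbPlXia^{\varepsilon,a}(G_k)\leq\urbPlMMS^{\varepsilon,a}(\chi)$. I would discretise both the particle space and time: partition $\reSpace$ (identified with $[0,m]$) into finitely many cells $P_1^k,\dots,P_{N_k}^k$ and the interval $I$ into $t_0=0<t_1<\dots<t_{M_k}=1$, and replace each fibre by the polygonal path through the points $\chi(p,t_j)$; collecting the segments coming from particles in the same cell with the same time index produces a finite weighted graph $G_k$ whose edge weights are (approximately) $\reMeasure(P_i^k)$ and which carries the discretised measures $\mu_\pm^k=\sum_i\reMeasure(P_i^k)\delta_{\chi(p_i,0)}$, $\sum_i\reMeasure(P_i^k)\delta_{\chi(p_i,1)}$. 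As the mesh is refined these converge weakly-$*$ to $\mu_\pm$ and the associated fluxes to $\flux$. For the cost, the key estimate is that the contribution of the segments sitting over a cell $P_i^k$ and a time step $[t_{j-1},t_j]$ is bounded by $c^{\varepsilon,a}$ evaluated at roughly the total mass $m_\chi$ near that segment, times its length; summing and passing to the limit one recovers $\int_{\reSpace\times I}\repsilonachi(\chi_p(t))|\dot\chi_p(t)|\,\de\reMeasure(p)\,\de t$. Here the concavity and subadditivity of $c^{\varepsilon,a}$ are used so that merging parallel segments into one edge only decreases the cost, while $m_\chi(x)=\reMeasure([x]_\chi)$ makes the local multiplicity and the density $\repsilonachi$ compatible.

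The main obstacle is the cost upper bound in the limit: one must arrange the space–time discretisation so that (i) segments that should be counted with combined multiplicity really do coincide or nearly coincide (this uses loop-freeness and the single-path structure from Remark\,\ref{rem:existenceOptPatternUrbPl} and Proposition\,\ref{prop:single_path_property_for_the_urban_planning_problem}, or at least the fact that an optimal $\chi$ may be chosen with this property), and (ii) the piecewise-constant approximation of $m_\chi$ along a fibre does not overshoot, which is where the upper semicontinuity of $m_\chi$ and lower semicontinuity of $\repsilonachi$ from Proposition\,\ref{prop:mass_is_upper_semicontinuous}, combined with the lower semicontinuity of $\urbPlMMS^{\varepsilon,a}$ (Proposition\,\ref{prop:urban_planning_energy_is_lower_semicontinuous}), enter to control the error. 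A cleaner route, which I expect the paper to take, is to first reduce by Remark\,\ref{rem:fluxDecomp} and Lemma\,\ref{lem:cycleCost} to cycle-free discrete fluxes and use the correspondence between cycle-free discrete fluxes and transport path measures: approximate $\chi$'s law $\eta=\chi_\#\reMeasure$ by finitely supported transport path measures concentrated on finitely many polygonal paths, read off the corresponding finite graphs $G_k$, and estimate their cost directly via $c^{\varepsilon,a}$. Either way, the delicate point is purely the cost inequality in the limit; the measure-theoretic bookkeeping ($\flux\in\rca$, divergence constraint, weak-$*$ convergence of marginals) is routine.
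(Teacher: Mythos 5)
Your overall strategy --- define $\flux$ directly from an optimal pattern $\chi$ via the duality formula \eqref{eqn:OptFluxIdentif} and then exhibit an approximating graph sequence obtained by discretising $\reSpace$ and $I$ --- differs from the paper's, and the step you yourself flag as ``the main obstacle'' is a genuine gap that your sketch does not close. The functional $\urbPlXia^{\varepsilon,a}(G_k)$ charges each edge $e$ the amount $c^{\varepsilon,a}(w(e))\,l(e)$ with $w(e)$ the weight of \emph{that} edge; there is no mechanism by which an edge gets charged ``$c^{\varepsilon,a}$ evaluated at roughly the total mass $m_\chi$ near that segment.'' When you replace each fibre by the polygon through the points $\chi(p,t_j)$, fibres from different cells $P_i^k$ that overlap on a curved arc produce polygonal approximations with vertices at different points of that arc, hence \emph{distinct} edges that do not merge in the graph. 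Subadditivity then works against you: $c^{\varepsilon,a}(w_1)+c^{\varepsilon,a}(w_2)\geq c^{\varepsilon,a}(w_1+w_2)$, so splitting mass over parallel nearby edges only increases the cost. In the regime where $m_\chi$ is bounded below but each cell carries mass $\reMeasure(P_i^k)\to0$, every edge weight falls below $\tfrac{\varepsilon}{a-1}$ and the discretised cost tends to $a\int_\reSpace\int_I|\dot\chi_p|\,\de t\,\de\reMeasure(p)$ rather than to $\int_{\reSpace\times I}\repsilonachi(\chi_p(t))|\dot\chi_p(t)|\,\de\reMeasure(p)\,\de t$. The single path property does not repair this, because it forces trajectories through common point pairs to coincide as sets, not the discretisation vertices $\chi(p,t_j)$ to coincide. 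Your ``cleaner route'' (approximating $\eta=\chi_\#\reMeasure$ by finitely many weighted polygonal paths) runs into exactly the same problem.

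The paper avoids the difficulty by never discretising the optimal continuous pattern. It discretises the \emph{marginals} onto a grid $h\Z^n$, takes an \emph{optimal} pattern $\chi^h$ for the discrete marginals $\mu_\pm^h$, and shows that such a pattern is automatically a finite graph: there are only finitely many source--sink classes $\reSpace_{ij}$, the single path property lets one take $\chi^h$ constant on each class, and a straightening argument forces the finitely many fibre intersections to be straight segments, so $\urbPlEn^{\varepsilon,a,\mu_+^h,\mu_-^h}[\chi^h]=\urbPlXia^{\varepsilon,a}(G^h)$ exactly. The cost comparison with the continuous problem goes the other way round, by appending to the optimal continuous $\chi$ short connector segments of length at most $h\sqrt n$ (the pattern $\tilde\chi^h$), giving $\limsup_h\urbPlEn^{\varepsilon,a,\mu_+^h,\mu_-^h}[\chi^h]\leq\urbPlEn^{\varepsilon,a,\mu_+,\mu_-}[\chi]$; one then extracts a weak-$*$ limit $\flux$ of the $\flux_{G^h}$ and invokes the definitional lower semicontinuity of $\urbPlXia^{\varepsilon,a}$. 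Note also that the resulting $\flux$ is \emph{not} claimed to satisfy \eqref{eqn:OptFluxIdentif} with the original $\chi$ at this stage; that identification is established in the converse direction in Proposition\,\ref{prop:constructPatternFromFlux}. If you want to salvage your approach you would need to either solve the discrete problems optimally (as the paper does) or find a discretisation in which overlapping fibre portions genuinely share edges, neither of which is supplied by your sketch.
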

\begin{proof}
For any $h>0$ we consider a discrete grid $Z^h=h\Z^n$ and define discrete approximations $\mu_+^h,\mu_-^h$ of $\mu_+,\mu_-$ via
\begin{displaymath}
\mu_\pm^h=\sum_{i\in\Z^n}\mu_\pm(hi+[0,h]^n)\,\delta_{hi}\,,
\end{displaymath}
where $\delta_{hi}$ is a Dirac mass centred at $hi$.
Due to the bounded support of $\mu_\pm$, $\mu_\pm^h$ is a finite weighted sum of Dirac masses.
Furthermore, $\mu_\pm^h\stackrel*\rightharpoonup\mu_\pm$ as $h\to0$.

Let $\chi$ be an optimal irrigation pattern for $\urbPlEn^{\varepsilon,a,\mu_+,\mu_-}$
and $\chi^h$ an optimal irrigation pattern for $\urbPlEn^{\varepsilon,a,\mu_+^h,\mu_-^h}$.
Further below we will show
\begin{displaymath}
\urbPlEn^{\varepsilon,a,\mu_+,\mu_-}[\chi]\geq\limsup_{h\to0}\urbPlEn^{\varepsilon,a,\mu_+^h,\mu_-^h}[\chi^h]\,.
\end{displaymath}
Furthermore, we will later also show that the $\chi^h$ can be identified with finite graphs (or the corresponding fluxes) $G^h$ such that
\begin{displaymath}
\urbPlEn^{\varepsilon,a,\mu_+^h,\mu_-^h}[\chi^h]=\urbPlXia^{\varepsilon,a}(G^h)\,.
\end{displaymath}
Now denoting by $\Wdone$ the 1-Wasserstein distance, by Remark\,\ref{rem:existenceUrbPlFiniteCostPattern}
we have $\urbPlEn^{\varepsilon,a,\mu_+^h,\mu_-^h}[\chi^h]\leq a\Wdone(\mu_+^h,\mu_-^h)\leq a\mu_+^h(\R^n)(2h+\mathrm{diam}(\spt\mu_+\cup\spt\mu_-))$
so that the finite graphs $G^h$ have uniformly bounded energy.
The corresponding fluxes are also uniformly bounded with respect to the total variation norm due to $\|\flux_{G^h}\|_\rca\leq\urbPlXia^{\varepsilon,a}(G^h)$
and thus are precompact with respect to weak-$*$ convergence.
Hence, there is a mass flux $\flux$ such that $\flux_{G^h}\stackrel*\rightharpoonup\flux$ up to a subsequence.
The lower semicontinuity of the cost then implies
\begin{displaymath}
\urbPlEn^{\varepsilon,a,\mu_+,\mu_-}[\flux]\leq\liminf_{h\to0}\urbPlXia^{\varepsilon,a}(G^h)\,,
\end{displaymath}
which concludes the proof.

In order to show $\urbPlEn^{\varepsilon,a,\mu_+,\mu_-}[\chi]\geq\limsup_{h\to0}\urbPlEn^{\varepsilon,a,\mu_+^h,\mu_-^h}[\chi^h]$,
we associate with every point $x\in\R^n$ the Lipschitz path $I\to\R^n$ given by
\begin{displaymath}
 \theta_x^h(t)=x+t(h\lfloor x/h\rfloor-x)\,,
\end{displaymath}
which connects $x$ with its corresponding point in $Z^h$
(here, $\lfloor c\rfloor=(\lfloor c_1\rfloor,\ldots,\lfloor c_n\rfloor)^T$, where $\lfloor c_i\rfloor=\max\{z\in\Z\ :\ z\leq c_i\}$ is the integer part).
Now we can define new irrigation patterns according to
\begin{equation*}
\chi_+^h(p,\cdot)=\theta_{\chi_p(0)}^h\circ\iota\,,\quad
\chi_-^h(p,\cdot)=\theta_{\chi_p(1)}^h\,,\quad
\tilde\chi^h(p,t)=\begin{cases}\chi_+^h(p,3t)&\text{if }t\in[0,\frac13],\\\chi(p,3t-1)&\text{if }t\in(\frac13,\frac23],\\\chi_-^h(p,3t-2)&\text{if }t\in(\frac23,1],\end{cases}
\end{equation*}
where $\iota(t) = 1-t$.
It can easily be checked that $\tilde\chi^h$ transports $\mu_+^h$ to $\mu_-^h$ with cost
\begin{align*}
\urbPlEn^{\varepsilon,a,\mu_+^h,\mu_-^h}[\tilde\chi^h]
&\leq\urbPlMMS^{\varepsilon,a}(\chi_+^h)+\urbPlMMS^{\varepsilon,a}(\chi)+\urbPlMMS^{\varepsilon,a}(\chi_-^h)\\
&=\urbPlMMS^{\varepsilon,a}(\chi_+^h)+\urbPlEn^{\varepsilon,a,\mu_+,\mu_-}[\chi]+\urbPlMMS^{\varepsilon,a}(\chi_-^h)\,.
\end{align*}
The estimate $\urbPlMMS^{\varepsilon,a}(\chi_\pm^h)\leq\mu_\pm(\R^n)ah\sqrt n$ (since all paths in $\chi_\pm^h$ have length no longer than $h\sqrt n$)
as well as $\urbPlEn^{\varepsilon,a,\mu_+^h,\mu_-^h}[\chi^h]\leq\urbPlEn^{\varepsilon,a,\mu_+^h,\mu_-^h}[\tilde\chi^h]$ then yield the desired result.

Finally, we need to show that the $\chi^h$ can be identified with finite graphs.
For $i,j\in\Z^n$ let $\reSpace_{ij} = \{p \in \reSpace \ : \ \chi_p^h(0)=hi, \chi_p^h(1)=hj\}$.
We have (potentially changing $\chi^h$ on a $\reMeasure$-null set, which does not alter its cost)
\begin{displaymath}
\reSpace=\bigcup_{i,j\in\Z^n}\reSpace_{ij}\,,
\end{displaymath}
where only finitely many, say $N$, terms of this union are nonempty,
since $\mu_+^h$ and $\mu_-^h$ consist of only finitely many weighted Dirac measures.
Since $\chi^h$ may be assumed to have the single path property (see Proposition\,\ref{prop:single_path_property_for_the_urban_planning_problem}),
$\chi^h:\reSpace\to \Lip(I)$ may be taken constant on each nonempty $\reSpace_{ij}$, i.\,e.\ $\chi^h(\reSpace_{ij})=\chi_{ij}$ for some $\chi_{ij}\in \Lip(I)$.
Furthermore, due to the single path property, the intersection of any two fibres $\chi_{ij}(I)$ and $\chi_{kl}(I)$
must be connected and can be assigned an orientation according to the irrigation direction.
Now define for any subset $S\subset\Z^n\times\Z^n$ the fibre intersection
\begin{displaymath}
f_S=\bigcap_{(i,j)\in S}\chi_{ij}(I)\setminus\bigcup_{(i,j)\notin S}\chi_{ij}(I)\,,
\end{displaymath}
where for simplicity we set $\chi_{ij}(I)=\emptyset$ for $\reSpace_{ij}=\emptyset$.
There are at most $2^N$ nonempty such intersections $f_S$, and each of them can have at most $N$ connected components $f_S^1,\ldots,f_S^N$
(again setting some of the $f_S^l$ to the empty set if necessary).
We have
\begin{displaymath}
\chi(\reSpace,I)=\bigcup_{S\subset\Z^n\times\Z^n}\bigcup_{0\leq l\leq N}f_S^l
\end{displaymath}
with at most $N2^N$ terms being nonempty.
Each of the $f_S^l$ can be assigned an orientation and a weight $w_S^l=\reMeasure\left(\bigcup_{(i,j)\subset S}\reSpace_{ij}\right)$,
the amount of particles travelling on $f_S^l$ (which is constant all along $f_S^l$).
Furthermore, $f_S^l$ must be a straight line segment, since otherwise, by straightening the fibres the cost of the irrigation pattern is reduced.
Hence, we can define a finite graph $G^h$ whose oriented edges are the $f_S^l$, whose vertices are the edge end points, and whose edge weights are the $w_S^l$.
It is now straightforward to check $\urbPlEn^{\varepsilon,a,\mu_+^h,\mu_-^h}[\chi^h]=\urbPlXia^{\varepsilon,a}(G^h)$ as required.
\end{proof}

The proof of the opposite inequality requires a few preparatory lemmas.

\begin{lemma}[Almost a $\Gamma$-convergence lemma]\label{lem:gamma_convergence_for_discrete_measures}
Suppose that
\begin{itemize}
 \item $\mu_+^N$, $\mu_-^N$ are discrete measures such that $\mu_+^N \weakstarto \mu_+$, $\mu_-^N \weakstarto \mu_-$ as $N\to\infty$;
 \item $\flux_N$ is a minimiser of $\urbPlEn^{\varepsilon,a,\mu_+^N,\mu_-^N}$;
 \item $\flux_N \weakstarto \flux$.
\end{itemize}
Then, $\flux$ is a minimiser of $\urbPlEn^{\varepsilon,a,\mu_+,\mu_-}$.
\end{lemma}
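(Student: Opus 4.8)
The statement is the standard consequence of a $\Gamma$-convergence property of the functionals $\urbPlEn^{\varepsilon,a,\mu_+^N,\mu_-^N}$ together with the assumed a priori convergence $\flux_N\weakstarto\flux$. The plan is therefore to prove (a) that $\flux$ is admissible, (b) a $\Gamma$-liminf inequality, and (c) the existence of recovery sequences, and then to combine these. As in the situation in which the lemma is applied (Proposition\,\ref{prop:constructFluxFromPattern}), I assume $\mu_+^N$ and $\mu_-^N$ to share the common total mass of $\mu_\pm$ and to be supported in a fixed ball $B$; since every mass flux of finite cost between measures supported in $B$ may itself be taken supported in $B$, all relevant measures lie in a bounded subset of $\rca(\R^n)$ on which the weak-$*$ topology is metrised by some metric $d$. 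Step (a) is immediate: $\dv\flux_N=\mu_+^N-\mu_-^N\weakstarto\mu_+-\mu_-$ and the divergence is weak-$*$ continuous, hence $\dv\flux=\mu_+-\mu_-$ and $\flux$ is a competitor for $\urbPlEn^{\varepsilon,a,\mu_+,\mu_-}$.

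For the $\Gamma$-liminf inequality $\urbPlXia^{\varepsilon,a}(\flux)\le\liminf_{N}\urbPlXia^{\varepsilon,a}(\flux_N)$ — i.e.\ the familiar fact that the relaxed functional $\urbPlXia^{\varepsilon,a}$ is weak-$*$ lower semicontinuous — I use a diagonal argument. For each $N$ choose an approximating graph sequence $(\mu_+^{N,k},\mu_-^{N,k},\flux_{G_k^N})\weakstarto(\mu_+^N,\mu_-^N,\flux_N)$ with $\liminf_k\urbPlXia^{\varepsilon,a}(G_k^N)\le\urbPlXia^{\varepsilon,a}(\flux_N)+\tfrac1N$; since $\|\flux_{G_k^N}\|_\rca\le\urbPlXia^{\varepsilon,a}(G_k^N)$ and the $\mu_\pm^{N,k}$ are bounded in mass, these objects stay in a $d$-metrisable bounded set, so I may pick $k(N)$ with $\urbPlXia^{\varepsilon,a}(G_{k(N)}^N)\le\urbPlXia^{\varepsilon,a}(\flux_N)+\tfrac2N$ and with the triple $1/N$-close (in the relevant metrics) to $(\mu_+^N,\mu_-^N,\flux_N)$. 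Then $(\mu_+^{N,k(N)},\mu_-^{N,k(N)},\flux_{G_{k(N)}^N})\weakstarto(\mu_+,\mu_-,\flux)$, so it is an approximating graph sequence for $\flux$ and the definition of $\urbPlXia^{\varepsilon,a}(\flux)$ yields the claim.

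For the recovery sequences, fix a competitor $\flux'$ with $\dv\flux'=\mu_+-\mu_-$; we may assume $\urbPlXia^{\varepsilon,a}(\flux')<\infty$. I will use two elementary facts, each proved for discrete fluxes and then extended by relaxation (adding the approximating graph sequences): (i) $\urbPlXia^{\varepsilon,a}(\flux_1+\flux_2)\le\urbPlXia^{\varepsilon,a}(\flux_1)+\urbPlXia^{\varepsilon,a}(\flux_2)$, which on the discrete level follows from the subadditivity and monotonicity of $c^{\varepsilon,a}$ (treating edges where the two graphs carry opposite orientations); and (ii) for finite measures $\nu_1,\nu_2$ of equal mass supported in $B$ there is a mass flux $\sigma$ with $\dv\sigma=\nu_1-\nu_2$ and $\urbPlXia^{\varepsilon,a}(\sigma)\le a\Wdone(\nu_1,\nu_2)$, obtained by reading off an almost optimal transport plan between discrete approximations as a graph whose cost is at most $a$ times the transported mass times distance (because $c^{\varepsilon,a}(w)\le aw$) and passing to the weak-$*$ limit. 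By (ii) choose mass fluxes $\sigma_N^\pm$ with $\dv\sigma_N^+=\mu_+^N-\mu_+$, $\dv\sigma_N^-=\mu_--\mu_-^N$ and $\urbPlXia^{\varepsilon,a}(\sigma_N^+)\le a\Wdone(\mu_+^N,\mu_+)\to0$, $\urbPlXia^{\varepsilon,a}(\sigma_N^-)\le a\Wdone(\mu_-,\mu_-^N)\to0$ (weak-$*$ convergence with equibounded supports implies convergence in $\Wdone$). Then $\flux_N':=\flux'+\sigma_N^++\sigma_N^-$ satisfies $\dv\flux_N'=\mu_+^N-\mu_-^N$ and, by (i), $\urbPlXia^{\varepsilon,a}(\flux_N')\le\urbPlXia^{\varepsilon,a}(\flux')+\urbPlXia^{\varepsilon,a}(\sigma_N^+)+\urbPlXia^{\varepsilon,a}(\sigma_N^-)\to\urbPlXia^{\varepsilon,a}(\flux')$.

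Combining everything, the minimality of $\flux_N$ for $\urbPlEn^{\varepsilon,a,\mu_+^N,\mu_-^N}$ and the admissibility of $\flux_N'$ give
\[
\urbPlEn^{\varepsilon,a,\mu_+,\mu_-}[\flux]=\urbPlXia^{\varepsilon,a}(\flux)\le\liminf_{N}\urbPlXia^{\varepsilon,a}(\flux_N)\le\liminf_{N}\urbPlXia^{\varepsilon,a}(\flux_N')=\urbPlXia^{\varepsilon,a}(\flux')=\urbPlEn^{\varepsilon,a,\mu_+,\mu_-}[\flux']\,,
\]
and since $\flux'$ was arbitrary, $\flux$ minimises $\urbPlEn^{\varepsilon,a,\mu_+,\mu_-}$. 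I expect the main obstacle to be step (c): although facts (i) and (ii) are not deep, making them precise requires unwinding the relaxation and approximating-graph-sequence definitions carefully, and the same care (measurability, metrisability, mass bookkeeping) is needed in the diagonalisation for the $\Gamma$-liminf inequality.
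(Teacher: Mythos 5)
Your proof is correct and, at its core, follows the same strategy as the paper's: combine a weak-$*$ lower semicontinuity (liminf) inequality for the relaxed cost with the existence of admissible competitors $\flux_N'$ for the discrete problems whose costs converge to the cost of an arbitrary competitor $\flux'$, and then invoke the minimality of $\flux_N$. The paper argues by contradiction and compresses both ingredients into one line each: lower semicontinuity is asserted as a known property of the relaxed energy, and the recovery sequence is claimed to exist ``by definition of $\urbPlXia^{\varepsilon,a}$''. Your version is more careful precisely where the paper is most terse: the definition of $\urbPlXia^{\varepsilon,a}(\flux')$ only furnishes approximating graphs whose marginals converge to $(\mu_+,\mu_-)$, not graphs that are admissible for the prescribed discrete marginals $(\mu_+^N,\mu_-^N)$, so some surgery is genuinely needed there. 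Your construction $\flux_N'=\flux'+\sigma_N^++\sigma_N^-$, using subadditivity of the cost under addition of fluxes and the bound $c^{\varepsilon,a}(w)\le aw$ to control the correction fluxes by $a\Wdone(\mu_\pm^N,\mu_\pm)\to0$, is the right way to fill this in (one still has to match the marginals of the combined approximating graph sequences exactly, e.g.\ by routing $\mu_\pm^N$ to the discrete marginals of the approximating graphs for $\flux'$ rather than to $\mu_\pm$ itself, but that is exactly the bookkeeping you already flag). The only caveat is that you import two hypotheses not present in the statement of the lemma --- equal total masses of $\mu_\pm^N$ and $\mu_\pm$, and uniformly bounded supports --- which are indeed satisfied in the one place the lemma is used (Proposition~\ref{prop:constructFluxFromPattern}) and are needed to make $\Wdone(\mu_\pm^N,\mu_\pm)$ meaningful and vanishing; it would be worth stating explicitly that the lemma is proved under these standing assumptions.
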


\begin{proof}
To achieve a contradiction suppose that $\flux$ is not a minimiser of $\urbPlEn^{\varepsilon,a,\mu_+,\mu_-}$, that is there exist $\flux'$ such that $\dv\flux' = \mu_+-\mu_-$ and $\urbPlEn^{\varepsilon,a,\mu_+,\mu_-}(\flux') < \urbPlEn^{\varepsilon,a,\mu_+,\mu_-}(\flux)$. On the right-hand side, we have (up to a subsequence)
\begin{displaymath}
 \urbPlEn^{\varepsilon,a,\mu_+,\mu_-}(\flux) \leq \liminf_{N \to \infty} \urbPlEn^{\varepsilon,a,\mu_+^N,\mu_-^N}(\flux_N)
\end{displaymath}
due to the weak-* lower semicontinuity of the energy.
Thus, given $\eta > 0$, for large $N$ we have
\begin{displaymath}
 \urbPlEn^{\varepsilon,a,\mu_+,\mu_-}(\flux) -\eta < \urbPlEn^{\varepsilon,a,\mu_+^N,\mu_-^N}(\flux_N).
\end{displaymath}
On the left-hand side, by definition of $\urbPlXia^{\varepsilon,a}$, there exists a sequence $\flux_N'$ such that
\begin{displaymath}
 \lim_{N \to \infty} \urbPlEn^{\varepsilon,a,\mu_+^N,\mu_-^N}(\flux_N') \leq \urbPlEn^{\varepsilon,a,\mu_+,\mu_-}(\flux') + \eta.
\end{displaymath}
Choosing $\eta$ suitably close to 0 now yields $\urbPlEn^{\varepsilon,a,\mu_+^N,\mu_-^N}(\flux_N')<\urbPlEn^{\varepsilon,a,\mu_+^N,\mu_-^N}(\flux_N)$ for $N$ large enough, a contradiction to the optimality of $\flux_N$.
\end{proof}

\begin{definition}[Paths in a graph and their weight]%\label{def:paths_in_graphs}
Let $G$ be an oriented graph with edge set $E(G)$.
A \emph{path} in $G$ is a sequence $\xi = (e_1,\ldots,e_k)$ of edges $e_1,\ldots,e_k\in E(G)$ such that $e_i^- = e_{i-1}^+$ for $i = 2,\ldots,k$, where $e^-$ and $e^+$ denote the initial and final point of edge $e$.

Suppose that $G$ is also weighted and has no cycles.
Let us denote by $\Xi(G)$ the set of \emph{maximal paths} on $G$, that is, paths that are not a subsequence of any other path.

The \emph{weights} $w(\xi)$ of all paths $\xi\in\Xi(G)$ is defined by the system of equations
\begin{displaymath}
 w(e) = \sum_{e \in \xi} w(\xi), \quad e \in E(G),
\end{displaymath}
whose solvability follows from \cite[Lemma 7.1]{Xia-Optimal-Paths}.

Finally, for $\Xi_0 \subseteq \Xi(G)$ we define
\begin{displaymath}
 |\Xi_0| = \sum_{\xi \in \Xi_0} w(\xi).
\end{displaymath}
\end{definition}

\begin{lemma}[Bound on fibre length]\label{lem:bound_on_fibre_length}
Let $\mu_+,\mu_-\in\fbm(\R^n)$ be discrete measures of equal mass with support in a convex set $\Omega\subset\R^n$ and let $G$ be any discrete mass flux between $\mu_+$ and $\mu_-$.
Let $\Xi_0$ denote the set of paths in $G$ of length greater than $2a\diam\Omega$,
and let $G(\Xi_0)$ denote the graph whose associated vectorial measure is given by
\begin{displaymath}
 %\flux_{G(\Xi_0)} = \sum_{e \in E(G)} \left(\sum_{e \in \xi \in \Xi_0} w(\xi)\right)\mu_e.
 \flux_{G(\Xi_0)} = \sum_{\xi \in \Xi_0} \sum_{e \in \xi} w(\xi)\mu_e
\end{displaymath}
(cf.\ Definition\,\ref{def:graphs_as_vectorial_measures}).
Then there exists a discrete mass flux $G'$ between $\mu_+$ and $\mu_-$ such that all its paths have length bounded by $2a\diam\Omega$ and
\begin{align}
%  \urbPlXia^{\varepsilon,a}(G) - \urbPlXia^{\varepsilon,a}(G') \geq a\diam\Omega|\Xi_0|\,.
 \urbPlXia^{\varepsilon,a}(G)\!\!-\!\!\urbPlXia^{\varepsilon,a}(G') &\geq \|\flux_{G(\Xi_0)}\|_\rca - a\diam\Omega|\Xi_0|\geq a\diam\Omega|\Xi_0|\,,\label{eq:bound_on_fibre_length}\\
 \|\flux_G - \flux_{G'}\|_\rca &\leq \|\flux_{G(\Xi_0)}\|_\rca + a\diam\Omega|\Xi_0|\leq3(\urbPlXia^{\varepsilon,a}(G)\!\!-\!\!\urbPlXia^{\varepsilon,a}(G'))\,.\label{eq:fluxDiff}
\end{align}
\end{lemma}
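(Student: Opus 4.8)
The plan is to build $G'$ by rerouting every over-long path of $G$ along the straight segment joining its two endpoints, and to control the cost change by the same elementary inequality $c^{\varepsilon,a}(w)-c^{\varepsilon,a}(w-w_0)\geq w_0$ (valid for $w\geq w_0\geq0$) that already drove Lemma~\ref{lem:cycleCost}. First I would reduce to the case that $G$ has no cycles (otherwise replace $G$ by its cycle-reduced mass flux, which by Lemma~\ref{lem:cycleCost} has no larger cost), so that a maximal-path decomposition $\Xi(G)$ with nonnegative weights $w(\xi)$ is available. Each $\xi\in\Xi(G)$ starts at a vertex with no incoming edge and ends at one with no outgoing edge; the mass-preserving conditions force such vertices to lie in $\spt\mu_+$, resp.\ $\spt\mu_-$, hence in $\Omega$. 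Thus, for $\xi\in\Xi_0$ the chord $e_\xi$ from $\xi^-$ to $\xi^+$ is contained in $\Omega$ (convexity) with length $l(e_\xi)=|\xi^--\xi^+|\leq\diam\Omega$. I would then let $G^s$ be $G$ with all paths of $\Xi_0$ deleted, i.e.\ the weight of each edge $e$ lowered from $w_G(e)$ to $w^s(e):=\sum_{\xi\in\Xi(G)\setminus\Xi_0,\,e\in\xi}w(\xi)$, with $w^l(e):=w_G(e)-w^s(e)$ so that $\flux_{G(\Xi_0)}=\sum_e w^l(e)\mu_e$, and define $G'$ as the multigraph $G^s$ together with the fresh edges $e_\xi$ of weight $w(\xi)$, so that $\flux_{G'}=\flux_{G^s}+\sum_{\xi\in\Xi_0}w(\xi)\mu_{e_\xi}$. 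Since $\dv\sum_{e\in\xi}\mu_e=\delta_{\xi^-}-\delta_{\xi^+}=\dv\mu_{e_\xi}$, one gets $\dv\flux_{G'}=\dv\flux_G=\mu_+-\mu_-$, so $G'$ is an admissible discrete mass flux, and its natural path decomposition (the members of $\Xi(G)\setminus\Xi_0$, of length $\leq 2a\diam\Omega$, together with the single-edge paths $e_\xi$, of length $\leq\diam\Omega$) consists of paths of length at most $2a\diam\Omega$.

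For the cost I would argue edge by edge. Applying $c^{\varepsilon,a}(w)-c^{\varepsilon,a}(w-w_0)\geq w_0$ with $w=w_G(e)$, $w_0=w^l(e)$ gives $c^{\varepsilon,a}(w_G(e))-c^{\varepsilon,a}(w^s(e))\geq w^l(e)$, whence $\urbPlXia^{\varepsilon,a}(G)-\urbPlXia^{\varepsilon,a}(G^s)\geq\sum_e w^l(e)l(e)=\sum_{\xi\in\Xi_0}w(\xi)\ell(\xi)$, where $\ell(\xi)=\sum_{e\in\xi}l(e)$; this quantity is at least $\|\flux_{G(\Xi_0)}\|_\rca$ (triangle inequality and $\|\mu_e\|_\rca=l(e)$) and also at least $2a\diam\Omega\,|\Xi_0|$ since $\ell(\xi)>2a\diam\Omega$ for every $\xi\in\Xi_0$. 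On the other side, $c^{\varepsilon,a}(w)\leq aw$ and $l(e_\xi)\leq\diam\Omega$ give $\urbPlXia^{\varepsilon,a}(G')-\urbPlXia^{\varepsilon,a}(G^s)=\sum_{\xi\in\Xi_0}c^{\varepsilon,a}(w(\xi))l(e_\xi)\leq a\diam\Omega\,|\Xi_0|$. Subtracting yields both $\urbPlXia^{\varepsilon,a}(G)-\urbPlXia^{\varepsilon,a}(G')\geq\|\flux_{G(\Xi_0)}\|_\rca-a\diam\Omega|\Xi_0|$ and $\urbPlXia^{\varepsilon,a}(G)-\urbPlXia^{\varepsilon,a}(G')\geq a\diam\Omega|\Xi_0|$, which is \eqref{eq:bound_on_fibre_length}. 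Finally $\flux_G-\flux_{G'}=\flux_{G(\Xi_0)}-\sum_{\xi\in\Xi_0}w(\xi)\mu_{e_\xi}$, so $\|\flux_G-\flux_{G'}\|_\rca\leq\|\flux_{G(\Xi_0)}\|_\rca+a\diam\Omega|\Xi_0|$; combining $\|\flux_{G(\Xi_0)}\|_\rca\leq(\urbPlXia^{\varepsilon,a}(G)-\urbPlXia^{\varepsilon,a}(G'))+a\diam\Omega|\Xi_0|$ with $a\diam\Omega|\Xi_0|\leq\urbPlXia^{\varepsilon,a}(G)-\urbPlXia^{\varepsilon,a}(G')$ gives \eqref{eq:fluxDiff}.

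The analytic estimates are all immediate from the two properties of $c^{\varepsilon,a}$ (marginal cost $\geq1$, and $c^{\varepsilon,a}(w)\leq aw$); the real work is the graph bookkeeping. The points I expect to need care with are: (i) legitimately reducing to a cycle-free $G$ so that the maximal-path decomposition and its weights are well defined; (ii) checking that $G'$ really has short paths — here I rely on the natural path decomposition above rather than on an ad hoc inspection of the maximal paths of $G'$, which I expect to be the most delicate step; and (iii) the slight mismatch, when edges of $G$ overlap on segments of positive length, between $\|\flux_{G(\Xi_0)}\|_\rca$ and the formal path-length sum $\sum_{\xi\in\Xi_0}w(\xi)\ell(\xi)$ — only the inequality $\|\flux_{G(\Xi_0)}\|_\rca\le\sum_{\xi\in\Xi_0}w(\xi)\ell(\xi)$ is used above, so the chain in \eqref{eq:bound_on_fibre_length} is to be read as the conjunction of the two one-sided bounds just derived.
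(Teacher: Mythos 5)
Your proof is correct and follows essentially the same route as the paper's: the paper's unspecified correction graph $G''$ is exactly your collection of straight chords $e_\xi$ weighted by $w(\xi)$, and the two cost estimates (marginal cost of $c^{\varepsilon,a}$ at least $1$ for the path removal; $c^{\varepsilon,a}(w)\le aw$ together with chord length $\le\diam\Omega$ for the insertion) are precisely the ones the paper uses. Your extra care on points (i)--(iii) --- in particular reading the chain in \eqref{eq:bound_on_fibre_length} as the conjunction of two one-sided bounds, since in general only $\|\flux_{G(\Xi_0)}\|_\rca\le\sum_{\xi\in\Xi_0}w(\xi)\ell(\xi)$ holds --- only makes rigorous some steps the paper writes as equalities, without changing the argument.
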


\begin{proof}
Definitions\,\ref{def:graphs_as_vectorial_measures} and \ref{def:sums_of_graphs} can be extended to paths and thus allow to define graphs as sums of edges and of paths, which we make use of in the following to simplify the exposition.
Let
\begin{displaymath}
 G' = \sum_{\xi \in \Xi(G)\setminus\Xi_0} w(\xi)\xi + G'' = \sum_{e \in E(G)} w'(e)e + G'',
\end{displaymath}
where
\begin{displaymath}
 w'(e) = w(e) - \sum_{e \in \xi \in \Xi_0} w(\xi)
\end{displaymath}
and $G''$ is a graph composed of straight edges, which recovers the flux conservation condition $\dv G' = \dv G = \mu_+-\mu_-$.
Denoting the length of an edge $e$ by $l(e)$, we can now compute
\begin{align*}
 \urbPlXia^{\varepsilon,a}(G) - \urbPlXia^{\varepsilon,a}(G') &= \sum_{e \in E(G)} [c^{\varepsilon,a}(w(e))-c^{\varepsilon,a}(w'(e))]l(e) - \urbPlXia^{\varepsilon,a}(G'')\\
 &= \sum_{e \in E(G)} \left(\sum_{e \in \xi \in \Xi_0} w(\xi)\right)l(e) - \urbPlXia^{\varepsilon,a}(G'')\\
 &= \sum_{\xi \in \Xi_0} \sum_{e \in \xi} w(\xi)l(e) - \urbPlXia^{\varepsilon,a}(G'')\\
 &\geq \|\flux_{G(\Xi_0)}\|_\rca - a\diam\Omega|\Xi_0|\,.
\end{align*}
The relation $\|\flux_{G(\Xi_0)}\|_\rca=\sum_{\xi \in \Xi_0} w(\xi) \sum_{e \in \xi}l(e)\geq 2a\diam\Omega|\Xi_0|$ now concludes the proof of \eqref{eq:bound_on_fibre_length}.
Equation\,\eqref{eq:fluxDiff} directly follows from $G-G'=G(\Xi_0)-G''$.
\end{proof}

Finally we will need the following compactness lemma for transport path measures.

\begin{lemma}[Compactness for transport path measures]\label{lem:compactness_lemma_for_transport_path_measures}
Let $C>0$ and $\Omega\subset\R^n$ be compact, and consider the set
\begin{displaymath}
 \Theta_C =\left\{\theta\in\Theta\,:\,\theta(I)\subset\Omega\text{ and }\textstyle\int_I |\dot\theta(t)|\de t \leq C\right\}\subset \Theta\,.
 %\Lip(I) \cap \cont(I,\Omega) \cap \{\theta : \int_I |\dot\theta(t)|\de t \leq C\}.
\end{displaymath}
Let $\eta_N\in\TPM(\mu_+,\mu_-)$ be a sequence of transport path measures such that
\begin{equation*}
\eta_N(\Theta\setminus\Theta_C) = 0\,.
\end{equation*}
Then, up to a subsequence, $\eta_N \weakto \eta$ in the sense
\begin{displaymath}
 \int_{\Theta} \varphi(\theta)\,\de\eta_N(\theta) \to \int_{\Theta} \varphi(\theta)\,\de\eta(\theta) \quad\text{for all}\ \varphi\in\contbdd(\Theta)\,,
\end{displaymath}
where $\contbdd(\Theta)$ denotes the set of bounded continuous functions on $\Theta$.
Moreover, $\eta\in\TPM(\mu_+,\mu_-)$.
\end{lemma}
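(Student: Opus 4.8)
The plan is to obtain $\eta$ as a weak-$*$ limit of a subsequence of the $\eta_N$ by exploiting that all of them are concentrated on one fixed compact subset of $\Theta$, and then to read off the marginals by testing against continuous functions composed with the evaluation maps. First I would note that every $\theta\in\Theta_C$ has length $\le C$ and satisfies $\theta(0)\in\Omega$ with $\Omega$ compact, so by the Ascoli--Arzel\`a remark stated just above the lemma, $\Theta_C$ is relatively compact in $(\Theta,d_\Theta)$; write $K$ for its closure, a compact metric space. Since $\eta_N(\Theta\setminus\Theta_C)=0$, each $\eta_N$ is a positive Borel measure concentrated on $\Theta_C\subset K$, and $\eta_N(\Theta)=\pushforward{p_0}{\eta_N}(\R^n)=\mu_+(\R^n)$ is a finite constant independent of $N$ because $\eta_N\in\TPM(\mu_+,\mu_-)$. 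Hence, regarded as elements of the dual of the separable Banach space $\cont(K)$, the measures $\eta_N$ are norm-bounded, and either Banach--Alaoglu together with the Riesz representation theorem, or Prokhorov's theorem (a norm-bounded family of measures on a compact metric space is automatically tight), provides a subsequence converging weakly-$*$ to a positive finite measure $\eta$ on $K$, which I then extend by zero to a Borel measure on $\Theta$.

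Next, for $\varphi\in\contbdd(\Theta)$ its restriction to $K$ is continuous, and since the $\eta_N$ and $\eta$ are concentrated on $K$, the weak-$*$ convergence on $\cont(K)$ yields $\int_\Theta\varphi\,\de\eta_N\to\int_\Theta\varphi\,\de\eta$, which is exactly the asserted convergence $\eta_N\weakto\eta$. To see $\eta\in\TPM(\mu_+,\mu_-)$, the key point is that the evaluation maps $p_0,p_1:\Theta\to\R^n$ are $1$-Lipschitz, hence continuous, with respect to $d_\Theta$: an increasing bijection $\varphi:I\to I$ necessarily satisfies $\varphi(0)=0$ and $\varphi(1)=1$, so $|\theta_1(0)-\theta_2(0)|$ and $|\theta_1(1)-\theta_2(1)|$ are bounded by $d_\Theta(\theta_1,\theta_2)$. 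Thus for any $\psi\in\contbdd(\R^n)$ we have $\psi\circ p_0\in\contbdd(\Theta)$, whence
\begin{align*}
 \int_{\R^n}\psi\,\de\pushforward{p_0}{\eta}
 &=\int_\Theta\psi\circ p_0\,\de\eta
 =\lim_N\int_\Theta\psi\circ p_0\,\de\eta_N\\
 &=\lim_N\int_{\R^n}\psi\,\de\pushforward{p_0}{\eta_N}
 =\int_{\R^n}\psi\,\de\mu_+\,,
\end{align*}
using $\pushforward{p_0}{\eta_N}=\mu_+$ for every $N$. As $\psi$ is arbitrary and the measures are finite, $\pushforward{p_0}{\eta}=\mu_+$; the identity $\pushforward{p_1}{\eta}=\mu_-$ is obtained verbatim, so $\eta\in\TPM(\mu_+,\mu_-)$.

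I do not expect a genuine obstacle here: the statement is a soft compactness fact whose nontrivial input — the Ascoli--Arzel\`a characterisation of compact subsets of $\Theta$ — is already available. The only places needing a little care are the bookkeeping with the (pseudo)metric $d_\Theta$, in particular that $K$ is truly compact and that the weak-$*$ limit produced on $\cont(K)$ transfers to the test class $\contbdd(\Theta)$ demanded in the statement (handled by the restriction argument above), and the continuity of the evaluation maps $p_0,p_1$ with respect to $d_\Theta$, which is precisely the step using reparameterisation-invariance of $d_\Theta$ at the endpoints of $I$. No mass can escape to infinity, since every $\eta_N$ is supported in the single fixed compact set $K$.
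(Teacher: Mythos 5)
Your proposal is correct and follows essentially the same route as the paper: compactness of $\Theta_C$ (or its closure) via the Ascoli--Arzel\`a remark, tightness plus constant total mass giving a weakly convergent subsequence by Prokhorov (the paper's choice) or equivalently Banach--Alaoglu on $\cont(K)$, and identification of the marginals by testing $\psi\circ p_0$, $\psi\circ p_1$ against the limit. Your explicit verification that $p_0,p_1$ are $1$-Lipschitz for $d_\Theta$ (because reparameterisations fix the endpoints of $I$) is a detail the paper leaves implicit, but there is no substantive difference.
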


\begin{proof}
Note that $\Theta$ is separable (which follows from the separability of $\Lip(I)$) and that $\Theta_C$ is a (sequentially) compact subset of $\Theta$.
Indeed, let $\theta_n$, $n=1,2,\ldots$, be a sequence in $\Theta_C$.
Upon reparameterisation of each element (which does not change the sequence), the $\theta_n$ are uniformly Lipschitz.
Thus, by the Ascoli--Arzel\`a Theorem, up to a subsequence we have $\theta_n\to\theta\in\cont(I;\Omega)$.
Furthermore,
\begin{displaymath}
 \int_I |\dot\theta(t)|\,\de t \leq \liminf_N \int_I |\dot\theta_N(t)|\,\de t \leq C.
\end{displaymath}

As a consequence, the $\eta_N$ are all supported on the same compact set and are thus tight (i.e. for every $\varepsilon > 0$ there exists a compact $K_\varepsilon$ such that $\eta_N(K_\varepsilon^c) < \varepsilon$).
Furthermore, due to $\eta_N\in\TPM(\mu_+,\mu_-)$ they all have the same mass.
Hence, by Prokhorov's Theorem (which assures weak compactness for a tight set of measures; see \cite{Bil99}) we get $\eta_N \weakto \eta$ up to a subsequence, as desired.

It remains to prove $\pushforward{p_0}{\eta} = \mu_+$ (the proof of $\pushforward{p_1}{\eta} = \mu_-$ works analogously).
Because of $\pushforward{p_0}{\eta_N} = \mu_+$ for all $N$ we have
\begin{equation*}
 \int_\Theta \varphi(p_0(\theta))\,\de\eta_N(\theta)
 %= \int_\Omega \varphi(x) \,\de(\pushforward{p_0}{\eta_N})(x)
 = \int_\Omega \varphi(x) \,\de\mu_+(x)
 \quad\text{for all}\ \varphi \in \contbdd(\Omega)\,.
\end{equation*}
Due to $\eta_N \weakto \eta$ as well as $\varphi\circ p_0\in\contbdd(\Theta)$, letting $N\to\infty$ we finally arrive at
\begin{equation*}%\label{eq:marginal_with_test_function}
 \int_\Theta \varphi(p_0(\theta))\,\de\eta(\theta)
 %= \int_\Omega \varphi(x) \,\de(\pushforward{p_0}{\eta})(x)
 = \int_\Omega \varphi(x) \,\de\mu_+(x) \quad\text{for all}\ \varphi \in \contbdd(\Omega)\,,
\end{equation*}
that is, $\pushforward{p_0}{\eta} = \mu_+$.
\end{proof}

\begin{proposition}\label{prop:constructPatternFromFlux}
We have
\begin{equation}\label{eq:urban_chi_leq_urban_flux}
 \min_\chi\urbPlEn^{\varepsilon,a,\mu_+,\mu_-}[\chi] \leq \min_\flux\urbPlEn^{\varepsilon,a,\mu_+,\mu_-}[\flux]\,.
\end{equation}
Furthermore, for any optimal mass flux  $\flux$ there is an optimal irrigation pattern $\chi$ so that both are related via
\begin{equation}\label{eq:constructPatternFromFlux}
\int_{\R^n}\varphi\cdot\de\flux=\int_\reSpace\int_I\varphi(\chi_p(t))\cdot\dot\chi_p(t)\,\de t\,\de \reMeasure(p) \;\text{for all}\; \varphi\in \cont_c(\R^n;\R^n).
\end{equation}
\end{proposition}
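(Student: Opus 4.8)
The plan is to realise an optimal mass flux $\flux$ as a weak-$*$ limit of finite graphs, convert those into transport path measures after taming their cycles and fibre lengths, pass to a limiting transport path measure $\eta\in\TPM(\mu_+,\mu_-)$, and parameterise it to obtain the desired pattern $\chi$. In the first step, let $\flux$ minimise $\urbPlEn^{\varepsilon,a,\mu_+,\mu_-}[\cdot]$; by definition of $\urbPlXia^{\varepsilon,a}$ there is an approximating graph sequence $(\mu_+^k,\mu_-^k,\flux_{G_k})\weakstarto(\mu_+,\mu_-,\flux)$ with $\urbPlXia^{\varepsilon,a}(G_k)\to\urbPlXia^{\varepsilon,a}(\flux)$, and since $\mu_\pm$ have bounded support we may assume (projecting everything onto a large closed ball, which cannot raise the energy) that all objects are supported in one fixed convex compact set $\Omega$. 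I would then replace $G_k$ by its cycle-reduced flux (Lemma~\ref{lem:cycleCost}) and afterwards by the flux $G_k'$ with all maximal paths of length at most $2a\diam\Omega$ furnished by Lemma~\ref{lem:bound_on_fibre_length}; neither step changes $\dv\flux_{G_k'}=\mu_+^k-\mu_-^k$ or raises the energy, and since the energies are squeezed between $\urbPlXia^{\varepsilon,a}(G_k)\to\urbPlXia^{\varepsilon,a}(\flux)$ and (by weak-$*$ lower semicontinuity) the energy of any weak-$*$ limit point of $\flux_{G_k'}$, which is at least $\urbPlXia^{\varepsilon,a}(\flux)$, the cost decrements tend to $0$; by Lemma~\ref{lem:cycleCost} and \eqref{eq:fluxDiff} this forces $\|\flux_{G_k}-\flux_{G_k'}\|_\rca\to0$, so $\flux_{G_k'}\weakstarto\flux$ still, and each $G_k'$ is cycle-free (a cycle would produce arbitrarily long paths).

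Next, each cycle-free $G_k'$ is identified via Remark~\ref{rem:fluxDecomp} with a transport path measure $\eta_k\in\TPM(\mu_+^k,\mu_-^k)$ concentrated on curves of length at most $C:=2a\diam\Omega$ lying in a fixed compact set. Lemma~\ref{lem:compactness_lemma_for_transport_path_measures}, whose proof goes through unchanged when the marginals only converge weakly-$*$, gives a subsequence with $\eta_k\weakto\eta$ and $\eta\in\TPM(\mu_+,\mu_-)$. By Skorokhod's theorem I would choose parameterisations $\chi_k$ of $\eta_k$ and $\chi$ of $\eta$ on one reference space such that $\chi_k(p,\cdot)\to\chi(p,\cdot)$ in $(\Theta,d_\Theta)$ for a.e.\ $p$, and then reparameterise the fibres (which alters neither the solidarity classes nor the energy, cf.\ Propositions~\ref{thm:constSpeedPatternsUrbPl} and \ref{prop:reparameterised_patterns_have_the_same_cost}) so that $\chi_k\to\chi$ pointwise, i.e.\ fibrewise uniformly, in the sense of Proposition~\ref{prop:urban_planning_energy_is_lower_semicontinuous}. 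Since $\chi$ parameterises $\eta\in\TPM(\mu_+,\mu_-)$ we get $\mu_+^\chi=\mu_+$ and $\mu_-^\chi=\mu_-$, so $\chi$ is admissible. I expect this passage — promoting $\eta_k\weakto\eta$ to an almost-everywhere uniformly convergent measurable family of parameterisations — to be the main technical obstacle, since both the lower semicontinuity below and the limit in the flux identity rely on it.

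For the energy comparison, fix $k$ and decompose $G_k'$ into its maximal paths (which are simple, $G_k'$ being cycle-free); parameterising the corresponding atomic measure $\eta_k$, the multiplicity $m_{\chi_k}(x)$ equals the sum of the weights of the edges of $G_k'$ through $x$, so by fibrewise injectivity and the coarea formula
\begin{displaymath}
\urbPlMMS^{\varepsilon,a}(\chi_k)=\int_{\R^n}c^{\varepsilon,a}(m_{\chi_k}(x))\,\de\hdone(x)\le\sum_{e\in E(G_k')}c^{\varepsilon,a}(w(e))\,l(e)=\urbPlXia^{\varepsilon,a}(G_k')\,,
\end{displaymath}
the inequality being subadditivity of $c^{\varepsilon,a}$ at points lying on several edges. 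Proposition~\ref{prop:urban_planning_energy_is_lower_semicontinuous} then yields
\begin{displaymath}
\urbPlMMS^{\varepsilon,a}(\chi)\le\liminf_{k\to\infty}\urbPlMMS^{\varepsilon,a}(\chi_k)\le\liminf_{k\to\infty}\urbPlXia^{\varepsilon,a}(G_k')\le\urbPlXia^{\varepsilon,a}(\flux)=\min_\flux\urbPlEn^{\varepsilon,a,\mu_+,\mu_-}[\flux]\,,
\end{displaymath}
which, $\chi$ being admissible, is precisely \eqref{eq:urban_chi_leq_urban_flux}; together with Proposition~\ref{prop:constructFluxFromPattern} it also shows that $\chi$ is an optimal pattern whenever $\flux$ is an optimal flux.

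Finally, for the identification \eqref{eq:constructPatternFromFlux}: writing $\flux_{G_k'}=\sum_\xi w(\xi)\mu_\xi$ with $\mu_\xi$ the vector measure associated with the path $\xi$, one has, for every $\varphi\in\cont_c(\R^n;\R^n)$,
\begin{displaymath}
\int_{\R^n}\varphi\cdot\de\flux_{G_k'}=\int_\reSpace\int_I\varphi(\chi_k(p,t))\cdot\dot\chi_k(p,t)\,\de t\,\de\reMeasure(p)\,.
\end{displaymath}
The left side tends to $\int_{\R^n}\varphi\cdot\de\flux$ because $\flux_{G_k'}\weakstarto\flux$. On the right side, for a.e.\ $p$ the curves $\chi_k(p,\cdot)$ converge uniformly to $\chi(p,\cdot)$ with lengths bounded by $C$, hence $\dot\chi_k(p,\cdot)\weakstarto\dot\chi(p,\cdot)$ as measures and $\int_I\varphi(\chi_k(p,t))\cdot\dot\chi_k(p,t)\,\de t\to\int_I\varphi(\chi(p,t))\cdot\dot\chi(p,t)\,\de t$, with integrands bounded by $\|\varphi\|_\infty C$; dominated convergence over the finite measure space $(\reSpace,\reMeasure)$ then gives \eqref{eq:constructPatternFromFlux}, completing the plan.
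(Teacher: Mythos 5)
Your proposal is correct and follows essentially the same route as the paper's proof: approximate the optimal flux by discrete graphs, remove cycles via Lemma\,\ref{lem:cycleCost} and bound path lengths via Lemma\,\ref{lem:bound_on_fibre_length}, pass to transport path measures, apply Lemma\,\ref{lem:compactness_lemma_for_transport_path_measures} and Skorokhod to obtain fibrewise uniformly convergent parameterisations, and conclude by Proposition\,\ref{prop:urban_planning_energy_is_lower_semicontinuous} and dominated convergence for \eqref{eq:constructPatternFromFlux}. The only deviation is that you justify the vanishing of the cost decrements by a direct squeeze against the relaxed energy of weak-$*$ limit points rather than by comparing with discrete minimisers via Lemma\,\ref{lem:gamma_convergence_for_discrete_measures}, which is an equally valid (and essentially equivalent) argument; your added care on the subadditivity inequality $\urbPlMMS^{\varepsilon,a}(\chi_k)\leq\urbPlXia^{\varepsilon,a}(G_k')$ and on the varying marginals in the compactness lemma is welcome.
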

\begin{proof}
In the first part of the proof, we construct a pattern $\chi$ from an optimal flux.
So let the flux $\flux$ be optimal. We may assume $\urbPlEn^{\varepsilon,a,\mu_+,\mu_-}[\flux] < \infty$ since otherwise there is nothing to show by Remark \ref{rem:existenceUrbPlFiniteCostPattern}.
Let $G_N$ a sequence of finite weighted graphs such that
\begin{itemize}
  \item $G_N$ is a discrete mass flux between some $\mu_+^N$ and $\mu_-^N$,
  \item $(\mu_+^N,\mu_-^N,\flux_{G_N}) \weakstarto (\mu_+,\mu_-,\flux)$,
  \item $\urbPlXia^{\varepsilon,a}(G_N) \to \urbPlEn^{\varepsilon,a,\mu_+,\mu_-}[\flux]$.
\end{itemize}
Note that if $\hat G_N$ or rather $\flux_{\hat G_N}$ is a minimiser of $\urbPlEn^{\varepsilon,a,\mu_+^N,\mu_-^N}$, by Lemma \ref{lem:gamma_convergence_for_discrete_measures} we must have
\begin{displaymath}
 \lim_{N\to\infty} \urbPlXia^{\varepsilon,a}(G_N)-\urbPlXia^{\varepsilon,a}(\hat G_N) = 0.
\end{displaymath}

Let $\tilde G_N$ denote the cycle-reduced graph $G_N$. By Lemma\,\ref{lem:cycleCost}, $\urbPlXia^{\varepsilon,a}(\tilde G_N)\leq\urbPlXia^{\varepsilon,a}(G_N)$ and
\begin{equation*}
\|\flux_{G_N}-\flux_{\tilde G_N}\|_\rca
\leq \urbPlXia^{\varepsilon,a}(G_N)-\urbPlXia^{\varepsilon,a}(\tilde G_N)
\leq\urbPlXia^{\varepsilon,a}(G_N)-\urbPlXia^{\varepsilon,a}(\hat G_N)
\mathop\to_{N\to\infty}0\,.
\end{equation*}
Thus, without loss of generality, we may replace the $G_N$ by discrete mass fluxes without cycles, and from now on $G_N$ is supposed to be without cycles.

We may even assume the $G_N$ to only contain paths with length bounded by $2a\diam(\Omega)$, where $\Omega$ is a ball containing $\spt\mu_+$ and $\spt\mu_-$.
Indeed, let $G_N'$ and $\Xi_0^N$ be the graph and the set of paths from Lemma \ref{lem:bound_on_fibre_length} associated with $G_N$.
By Lemma \ref{lem:bound_on_fibre_length} we obtain
\begin{equation*}
\|\flux_{G_N}-\flux_{G_N'}\|_\rca
\leq 3(\urbPlXia^{\varepsilon,a}(G_N)-\urbPlXia^{\varepsilon,a}(G_N'))
\leq3(\urbPlXia^{\varepsilon,a}(G_N)-\urbPlXia^{\varepsilon,a}(\hat G_N))
\mathop\to_{N\to\infty}0\,.
\end{equation*}
Thus we may replace the $G_N$ by the $G_N'$, which have uniformly bounded path lengths.

Summarising, from now on we may assume the $G_N$ to have no cycles and to have path lengths bounded by $2a\diam\Omega$.
Hence, by Remark\,\ref{rem:fluxDecomp} there exist corresponding transport path measures $\eta_N\in\TPM(\mu_+,\mu_-)$.
Since they parametrise the graphs $G_N$, the $\eta_N$ have support on paths with lengths bounded by $2a\diam\Omega$ and images in $B_{2a\diam\Omega}(\Omega)$.
Thanks to Lemma \ref{lem:compactness_lemma_for_transport_path_measures}, we thus have (up to a subsequence) $\eta_N \weakto \eta$ for some $\eta\in\TPM(\mu_+,\mu_-)$.
By Skorokhod's Convergence Theorem \cite[App.\,A, Thm.\,A.8]{BeCaMo09}, there exist a sequence of irrigation patterns $\chi_N$ parameterising $\eta_N$ and an irrigation pattern $\chi$ parameterising $\eta$ such that
\begin{displaymath}
 \chi_N(p,\cdot) \stackrel{C^0(I)}{\longrightarrow} \chi(p,\cdot) \quad \text{for all }p \in \reSpace\,.
\end{displaymath}
By Proposition \ref{prop:urban_planning_energy_is_lower_semicontinuous} we have
\begin{equation*}
 \urbPlEn^{\varepsilon,a,\mu_+,\mu_-}[\chi] = \urbPlMMS^{\varepsilon,a}(\chi) 
 \leq \liminf_{N \to \infty} \urbPlMMS^{\varepsilon,a}(\chi_N) = \liminf_{N \to \infty} \urbPlXia^{\varepsilon,a}(G_N) = \urbPlEn^{\varepsilon,a,\mu_+,\mu_-}[\flux]\,,
\end{equation*}
and \eqref{eq:urban_chi_leq_urban_flux} is established.

In the second part of the proof we now explain the relation given by formula \eqref{eq:constructPatternFromFlux} between the constructed $\chi$ and $\flux$.
After reparameterisation according to Proposition\,\ref{thm:constSpeedPatternsUrbPl} we may assume the $\chi_N(p,\cdot)$ to be uniformly Lipschitz with constant $2a\diam(\Omega)$.
Thus, for each $p$ we may extract a subsequence converging weakly-$*$ in $W^{1,\infty}(I)$ against $\chi(p,\cdot)$.
Since any subsequence contains such a converging subsequence, actually the whole sequence $\chi_N(p,\cdot)$ converges weakly-$*$ against $\chi(p,\cdot)$.
Now for any $\varphi\in \cont_c(\R^n;\R^n)$ we have (the second equality can be easily verified edge by edge)
\begin{displaymath}
\int_\Omega\varphi\cdot\de\flux
=\lim_{N\to\infty}\int_\Omega\varphi\cdot\de\flux_{G_N}
=\lim_{N\to\infty}\int_\reSpace\int_I\varphi(\chi_N(p,t))\cdot\dot\chi_N(p,t)\,\de t\,\de \reMeasure(p)\,.
\end{displaymath}
Note that $\varphi(\chi_N(p,\cdot))$ converges in $L^\infty(I)$, while $\dot\chi_N(p,\cdot)$ converges weakly-$*$ in $L^\infty(I)$ so that
\begin{displaymath}
\int_I\varphi(\chi_N(p,t))\cdot\dot\chi_N(p,t)\,\de t=:J_N(p)\to J(p):=\int_I\varphi(\chi(p,t))\cdot\dot\chi(p,t)\,\de t\,.
\end{displaymath}
Together with the uniform bound $J_N(p)\leq\|\varphi\|_{L^\infty}\hdone(\chi_N(p,I))\leq2\|\varphi\|_{L^\infty}a\diam(\Omega)$,
this allows application of Lebesgue's dominated convergence theorem, from which we finally obtain
\begin{displaymath}
\int_\Omega\varphi\cdot\de\flux
=\lim_{N\to\infty}\int_\reSpace\int_I\varphi(\chi_N(p,t))\cdot\dot\chi_N(p,t)\,\de t\,\de \reMeasure(p)
=\int_\reSpace\int_I\varphi(\chi(p,t))\cdot\dot\chi(p,t)\,\de t\,\de \reMeasure(p)\,,
\end{displaymath}
the desired formula.
\end{proof}

\section{Acknowledgements}
This work was supported by the Deutsche Forschungsgemeinschaft (DFG), Cells-in-Motion Cluster of Excellence (EXC 1003-CiM), University of M\"unster, Germany.
B.W.'s research was supported by the Alfried Krupp Prize for Young University Teachers awarded by the Alfried Krupp von Bohlen und Halbach-Stiftung.

\bibliographystyle{alpha}
\bibliography{BrWi14}

\begin{thebibliography}{BPSS09}

\bibitem[AGS08]{Ambrosio-Gigli-Sarave-Gradient-Flows}
Luigi Ambrosio, Nicola Gigli, and Giuseppe Savar{\'e}.
\newblock {\em Gradient flows in metric spaces and in the space of probability
  measures}.
\newblock Lectures in Mathematics ETH Z\"urich. Birkh\"auser Verlag, Basel,
  second edition, 2008.

\bibitem[AMO93]{Ahuja-Magnanti-Orlin}
Ravindra~K. Ahuja, Thomas~L. Magnanti, and James~B. Orlin.
\newblock {\em Network flows}.
\newblock Prentice Hall, Inc., Englewood Cliffs, NJ, 1993.
\newblock Theory, algorithms, and applications.

\bibitem[BB05]{Brancolini-Buttazzo}
Alessio Brancolini and Giuseppe Buttazzo.
\newblock Optimal networks for mass transportation problems.
\newblock {\em ESAIM Control Optim. Calc. Var.}, 11(1):88--101 (electronic),
  2005.

\bibitem[BB10]{Bianchini-Brancolini}
Stefano Bianchini and Alessio Brancolini.
\newblock Estimates on path functionals over {W}asserstein spaces.
\newblock {\em SIAM J. Math. Anal.}, 42(3):1179--1217, 2010.

\bibitem[BBS06]{Brancolini-Buttazzo-Santambrogio}
Alessio Brancolini, Giuseppe Buttazzo, and Filippo Santambrogio.
\newblock Path functionals over {W}asserstein spaces.
\newblock {\em J. Eur. Math. Soc. (JEMS)}, 8(3):415--434, 2006.

\bibitem[BCM05]{Bernot-Caselles-Morel-Traffic-Plans}
Marc Bernot, Vicent Caselles, and Jean-Michel Morel.
\newblock Traffic plans.
\newblock {\em Publ. Mat.}, 49(2):417--451, 2005.

\bibitem[BCM08]{Bernot-Caselles-Morel-Structure-Branched}
Marc Bernot, Vicent Caselles, and Jean-Michel Morel.
\newblock The structure of branched transportation networks.
\newblock {\em Calc. Var. Partial Differential Equations}, 32(3):279--317,
  2008.

\bibitem[BCM09]{BeCaMo09}
Marc Bernot, Vicent Caselles, and Jean-Michel Morel.
\newblock {\em Optimal transportation networks}, volume 1955 of {\em Lecture
  Notes in Mathematics}.
\newblock Springer-Verlag, Berlin, 2009.
\newblock Models and theory.

\bibitem[Bil99]{Bil99}
Patrick Billingsley.
\newblock {\em Convergence of probability measures}.
\newblock Wiley Series in Probability and Statistics: Probability and
  Statistics. John Wiley \& Sons, Inc., New York, second edition, 1999.
\newblock A Wiley-Interscience Publication.

\bibitem[BPSS09]{BuPrSoSt09}
Giuseppe Buttazzo, Aldo Pratelli, Sergio Solimini, and Eugene Stepanov.
\newblock {\em Optimal urban networks via mass transportation}, volume 1961 of
  {\em Lecture Notes in Mathematics}.
\newblock Springer-Verlag, Berlin, 2009.

\bibitem[BS11a]{Brancolini-Solimini-Hoelder}
Alessio Brancolini and Sergio Solimini.
\newblock On the {H}\"older regularity of the landscape function.
\newblock {\em Interfaces Free Bound.}, 13(2):191--222, 2011.

\bibitem[BS11b]{Brasco-Santambrogio}
Lorenzo Brasco and Filippo Santambrogio.
\newblock An equivalent path functional formulation of branched transportation
  problems.
\newblock {\em Discrete Contin. Dyn. Syst.}, 29(3):845--871, 2011.

\bibitem[BS14]{Brancolini-Solimini-Fractal}
Alessio Brancolini and Sergio Solimini.
\newblock Fractal regularity results on optimal irrigation patterns.
\newblock {\em J. Math. Pures Appl. (9)}, 102(5):854--890, 2014.

\bibitem[Dud02]{Dudley}
Richard~M. Dudley.
\newblock {\em Real analysis and probability}, volume~74 of {\em Cambridge
  Studies in Advanced Mathematics}.
\newblock Cambridge University Press, Cambridge, 2002.
\newblock Revised reprint of the 1989 original.

\bibitem[GDL14]{Gauthier-Desrosiers-Luebbecke}
Jean~Bertrand Gauthier, Jacques Desrosiers, and Marco~E. L{\"u}bbecke.
\newblock Decomposition theorems for linear programs.
\newblock {\em Oper. Res. Lett.}, 42(8):553--557, 2014.

\bibitem[MS09]{Maddalena-Solimini-Transport-Distances}
Francesco Maddalena and Sergio Solimini.
\newblock Transport distances and irrigation models.
\newblock {\em J. Convex Anal.}, 16(1):121--152, 2009.

\bibitem[MS10]{Morel-Santambrogio-Regularity}
Jean-Michel Morel and Filippo Santambrogio.
\newblock The regularity of optimal irrigation patterns.
\newblock {\em Arch. Ration. Mech. Anal.}, 195(2):499--531, 2010.

\bibitem[MS13]{Maddalena-Solimini-Synchronic}
Francesco Maddalena and Sergio Solimini.
\newblock Synchronic and asynchronic descriptions of irrigation problems.
\newblock {\em Adv. Nonlinear Stud.}, 13(3):583--623, 2013.

\bibitem[MSM03]{Maddalena-Morel-Solimini-Irrigation-Patterns}
F.~Maddalena, S.~Solimini, and J.-M. Morel.
\newblock A variational model of irrigation patterns.
\newblock {\em Interfaces Free Bound.}, 5(4):391--415, 2003.

\bibitem[Roy88]{Royden-Real-Analysis}
Halsey~L. Royden.
\newblock {\em Real analysis}.
\newblock Macmillan Publishing Company, New York, third edition, 1988.

\bibitem[Rud87]{Ru87}
Walter Rudin.
\newblock {\em Real and complex analysis}.
\newblock McGraw-Hill Book Co., New York, third edition, 1987.

\bibitem[San07]{Santambrogio-Landscape}
Filippo Santambrogio.
\newblock Optimal channel networks, landscape function and branched transport.
\newblock {\em Interfaces Free Bound.}, 9(1):149--169, 2007.

\bibitem[Vil03]{Villani-Topics-Optimal-Transport}
C{\'e}dric Villani.
\newblock {\em Topics in optimal transportation}, volume~58 of {\em Graduate
  Studies in Mathematics}.
\newblock American Mathematical Society, Providence, RI, 2003.

\bibitem[Vil09]{Villani-Transport-Old-New}
C{\'e}dric Villani.
\newblock {\em Optimal transport}, volume 338 of {\em Grundlehren der
  Mathematischen Wissenschaften [Fundamental Principles of Mathematical
  Sciences]}.
\newblock Springer-Verlag, Berlin, 2009.
\newblock Old and new.

\bibitem[Xia03]{Xia-Optimal-Paths}
Qinglan Xia.
\newblock Optimal paths related to transport problems.
\newblock {\em Commun. Contemp. Math.}, 5(2):251--279, 2003.

\bibitem[Xia04]{Xia-Interior-Regularity}
Qinglan Xia.
\newblock Interior regularity of optimal transport paths.
\newblock {\em Calc. Var. Partial Differential Equations}, 20(3):283--299,
  2004.

\end{thebibliography}

\end{document}